\newtheorem{thm}{Theorem}[section]
\newtheorem{lemma}[thm]{Lemma}
\newtheorem{prop}[thm]{Proposition}
\newtheorem{remark}[thm]{Remark}
\title{Hoeffding's inequality for sums of weakly dependent random variables}
\author{Christos Pelekis\thanks{Department of Computer Science, KU Leuven
Celestijnenlaan 200A,
3001, Belgium, Email: pelekis.chr@gmail.com}, \  \
Jan Ramon\thanks{Department of Computer Science KU Leuven
Celestijnenlaan 200A, 
3001, Belgium, Email: Jan.Ramon@cs.kuleuven.be} }
\begin{document}

\maketitle



\begin{abstract}  
We provide a systematic approach to deal with the following problem. Let $X_1,\ldots,X_n$ be,  
possibly dependent, $[0,1]$-valued random variables. 
What is a sharp upper bound on the probability 
that their sum is significantly larger than their mean? 
In the case of independent random variables, a fundamental tool for bounding such 
probabilities is devised by Wassily Hoeffding. In this paper we consider analogues of Hoeffding's 
result for sums of dependent random variables for which we have certain information on their 
dependency structure. We 
prove a result that 
yields concentration 
inequalities for several notions of weak  dependence between random variables.  
Additionally,   
we obtain a new concentration inequality for sums of, possibly dependent, $[0,1]$-valued 
random variables, $X_1,\ldots,X_n$, that satisfy the 
following condition: there exist constants $\gamma \in (0,1)$ and $\delta\in (0,1]$ such that 
for every subset $A\subseteq \{1,\ldots,n\}$ we have
$\mathbb{E}\left[\prod_{i\in A} X_i \prod_{i\notin A}(1-X_i) \right]\leq \gamma^{|A|} \delta^{n-|A|}$,
where $|A|$ denotes the cardinality of $A$. Our approach applies to  
several sums of weakly dependent random variables such as 
sums of martingale difference sequences, sums of $k$-wise independent random variables and $U$-statistics. 
Finally, we discuss some applications to the theory of random graphs. 
\end{abstract}

\noindent {\emph{Keywords}: Hoeffding's inequality, weakly dependent random variables, martingale differences, $k$-wise 
independent random variables, $U$-statistics}

\section{Prologue, related work and main results}\label{prologue}

\subsection{Covariance estimates}

The main purpose of this work is to 
obtain extensions of Hoeffding's inequality to  sums of weakly dependent random variables. 
In order to emphasize the analogy between existing and posterior results,
let us begin right away 
by stating Hoeffding's well-known theorem (see \cite{Hoeffdingone}, Theorem $1$).  
Throughout the text, $\mathbb{E}[\cdot]$ and 
$\mathbb{P}[\cdot]$ 
will denote expectation and probability, respectively.    \\

\begin{thm}[Hoeffding, $1963$]
\label{folklore}
Let  $X_1,\ldots,X_n$ be \emph{independent} random variables such that $0\leq X_i \leq 1$, for each 
$i=1,\ldots,n$.
Set $p = \frac{1}{n}\sum_{i=1}^{n}\mathbb{E}[X_i]$ and fix a real number $\varepsilon$
from the interval $\left(0,\frac{1}{p}-1\right)$. If $t=np+np\varepsilon$ then
\[ \mathbb{P}\left[\sum_{i=1}^{n}X_i  \geq t \right] \leq   \inf_{h>0}\; e^{-ht}\left(1-p + pe^h\right)^n . \]
Furthermore, 
\[  \inf_{h>0} \; e^{-ht}\left(1-p + pe^h\right)^n  =p^t (1-p)^{n-t} \left(\frac{n-t}{t}\right)^{t} \left(\frac{n}{n-t}\right)^{n} := H(n,p,t)  \]
and  
\[H(n,p,t) = e^{-n D(p+p\varepsilon||p)}, \]
where for $q,p\in (0,1)$, $D(q||p) = q\ln\frac{q}{p}+(1-q)\ln\frac{1-q}{1-p}$ is the Kullback-Leibler distance between $q$ and $p$.  
\end{thm}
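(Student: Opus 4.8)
The plan is to follow the classical exponential–moment (Chernoff) method and then carry out the resulting one–variable optimization explicitly. First I would fix $h>0$ and write, via Markov's inequality applied to the random variable $e^{h\sum_i X_i}$,
$\mathbb{P}\left[\sum_{i=1}^n X_i \geq t\right] = \mathbb{P}\left[e^{h\sum_i X_i}\geq e^{ht}\right] \leq e^{-ht}\,\mathbb{E}\left[e^{h\sum_i X_i}\right]$,
and then use independence to factor $\mathbb{E}\left[e^{h\sum_i X_i}\right] = \prod_{i=1}^n \mathbb{E}\left[e^{hX_i}\right]$. The key elementary observation is that the convex function $x\mapsto e^{hx}$ lies below its chord on $[0,1]$, so $e^{hx}\leq 1-x+xe^h$ for $x\in[0,1]$; taking expectations gives $\mathbb{E}\left[e^{hX_i}\right]\leq 1-p_i+p_ie^h$ with $p_i=\mathbb{E}[X_i]$. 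It then remains to bound $\prod_{i=1}^n(1-p_i+p_ie^h)$ by $(1-p+pe^h)^n$: since $p\mapsto \ln(1+p(e^h-1))$ has a strictly negative second derivative it is concave, so Jensen's inequality applied to the uniform average of $p_1,\dots,p_n$ delivers the bound. Taking the infimum over $h>0$ establishes the first displayed inequality.

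For the value of the infimum I would set $\phi(h) = -ht + n\ln(1-p+pe^h)$ and compute $\phi'(h) = -t + \frac{npe^h}{1-p+pe^h}$. Because $\frac{pe^h}{1-p+pe^h} = 1 - \frac{1-p}{1-p+pe^h}$ is strictly increasing, $\phi$ is strictly convex; moreover $\phi(0)=0$, $\phi'(0) = np - t < 0$ since $t=np(1+\varepsilon)>np$, and $\phi'(h)\to n-t>0$ as $h\to\infty$ since $t=np(1+\varepsilon)<n$ — these two sign conditions are exactly the hypothesis $\varepsilon\in\left(0,\tfrac1p-1\right)$, and together they force a unique stationary point $h^\star>0$ at which $\phi$ attains its global minimum. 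Solving $\phi'(h)=0$ yields $e^{h^\star} = \frac{t(1-p)}{p(n-t)}$; substituting back gives $1-p+pe^{h^\star} = (1-p)\frac{n}{n-t}$ and $e^{-h^\star t} = \left(\frac{p(n-t)}{t(1-p)}\right)^t$, and multiplying these out produces precisely $p^t(1-p)^{n-t}\left(\frac{n-t}{t}\right)^t\left(\frac{n}{n-t}\right)^n = H(n,p,t)$.

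Finally, for the Kullback–Leibler form I would put $q := p+p\varepsilon = t/n$, so that $(n-t)/n = 1-q$, and simply expand $\ln H(n,p,t) = t\ln p + (n-t)\ln(1-p) + t\ln\frac{n-t}{t} + n\ln\frac{n}{n-t}$. Substituting $t=nq$ and $n-t=n(1-q)$ and collecting the $\ln q$ and $\ln(1-q)$ contributions, the $n\ln n$ terms cancel and what is left is $-n\bigl(q\ln\frac qp + (1-q)\ln\frac{1-q}{1-p}\bigr) = -nD(q||p)$, i.e. $H(n,p,t) = e^{-nD(p+p\varepsilon||p)}$.

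As for the main difficulty: there is essentially no deep obstacle, since this is Hoeffding's original argument. The only points that require genuine care are (i) the two convexity/Jensen reductions that bring the problem down to the case of a single $[0,1]$-valued variable with mean $p$, and (ii) the bookkeeping that the interval $\varepsilon\in\left(0,\tfrac1p-1\right)$ is precisely what guarantees $t\in(np,n)$, so that the stationary point $h^\star$ is strictly positive and the closed-form expressions $H(n,p,t)$ and $D(p+p\varepsilon||p)$ are well defined; the remaining steps are routine calculus and algebraic simplification.
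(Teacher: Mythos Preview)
Your argument is correct and is essentially Hoeffding's original proof. The paper, however, deliberately takes a different route, because the point of that subsection is to show that its main result, Theorem~\ref{depHoeff}, specialises to the classical bound. Instead of bounding each factor $\mathbb{E}[e^{hX_i}]$ via the chord inequality $e^{hx}\le 1-x+xe^h$ and then passing from the $p_i$ to their average $p$ by concavity of $p\mapsto\ln(1+p(e^h-1))$, the paper first applies Theorem~\ref{depHoeff} to write $\sum_i X_i$ as a convex combination of $\{0,1,\dots,n\}$ with weights $Z_A$, observes that under independence $\sum_{A\in\partial_j[n]}\mathbb{E}[Z_A]=\mathbb{P}[H(p_1,\dots,p_n)=j]$ is the law of a Poisson--binomial sum, and then invokes Hoeffding's 1956 result (Theorem~\ref{successes}) to replace $H(p_1,\dots,p_n)$ by a genuine binomial $B_{n,p}$; the final minimisation over $h$ is then identical to yours. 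Your approach is shorter and entirely self-contained; the paper's approach buys the consistency check that the general machinery of Theorem~\ref{depHoeff} does indeed recover Theorem~\ref{folklore} in the independent case.
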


The function $H(n,p,t)$ is the so-called \emph{Hoeffding function}. The estimate 
$e^{-n D(p+p\varepsilon||p)}$, i.e. the Hoeffding function expressed in terms of the Kullback-Leibler distance, 
is referred to as the \emph{Chernoff-Hoeffding bound}. 
In other words, Hoeffding's result provides an upper bound 
on the probability that a sum of \emph{independent and bounded} random variables is 
significantly larger than its expected value. 
We remark that a foolproof version of the bound can be obtained using the standard estimate on 
the Kullback-Leibler distance: $D(q||p) \geq 2(q-p)^2$, for $q,p\in (0,1)$ such that $q\geq p$.
Hoeffding's inequality is a folklore result that 
has been proven to be useful in a plethora of problems in combinatorics, probability, statistics and 
theoretical computer science. 
However, there are several instances in which 
one is dealing with    
sums of bounded random variables that are \emph{not} independent;  as an example the reader may 
think of the number of triangles in an Erd\H{o}s-R\'enyi random graph.  
Such instances have been encountered by several authors for a variety of questions
which, in succession, gave rise to the problem of obtaining 
analogues of Theorem \ref{folklore} for sums of dependent random 
variables, under certain assumptions on their dependency-structure. The amount of literature 
that treats the problem of extending Hoeffding's theorem to 
sums of dependent random variables is vast and the interested 
reader is invited to take a look at 
the works of Azuma \cite{Azuma}, Bentkus \cite{Bentkusone}, Delyon \cite{Delyon},
Fan et al. \cite{FanGrama}, Gavinsky et al. \cite{Gavinsky}, 
Gradwohl et al. \cite{Gradwohl}, Hazla et al. \cite{Hazla}, Impagliazzo et al. 
\cite{Impagliazzo}, Janson \cite{Janson}, Kallabis et al.  \cite{Kallabis},
Kontorovich et al. \cite{Kontorovich}, Linial et al. \cite{Linial},
 McDiarmid \cite{McDiarmid}, Ramon et al. \cite{Ramon}, Rio \cite{Rio}, Schmidt et al. 
\cite{Schmidt}, Siegel \cite{Siegelone}, Van de Geer \cite{Geer}, Vu \cite{Vu}, among others. 
Let us also remark that certain assumptions of "weak dependence" between the random variables 
are required in order to make the problem interesting. If the random  
variables are fully dependent then the problem is trivial; just let $X_1=\cdots =X_n =\frac{t}{n}$ with probability 
$\frac{1}{t}\sum_i\mathbb{E}[X_i]$ and   $X_1=\cdots =X_n =0$ with 
probability $1-\frac{1}{t}\sum_i\mathbb{E}[X_i]$. Then, for $t \geq \sum_i\mathbb{E}[X_i]$, Markov's 
inequality implies that  
$\mathbb{P}\left[\sum_i X_i \geq t\right]\leq \frac{\sum_i\mathbb{E}[X_i]}{t}$ and the later collection 
of random variables attains this bound. 
This article may be regarded as an addendum to the aforementioned amount of literature; 
we prove a result that can be employed in order to obtain    
concentration inequalities for sums of \emph{dependent} 
random variables for which we have certain information on their dependency structure.  \\
The exposition of our paper proceeds as follows. In the remaining part of the current section we formalise 
a particular type of "dependency-structure" between bounded random variables and 
juxtapose existing bounds on the probability that their sum is larger than their mean with 
bounds obtained via our approach.  
There are several ways to describe a dependency structure between random variables, some 
of which will be discussed in the following subsections. 
Let us begin with a rather general 
description that
assumes estimates on the "covariance structure" of the random variables and is contained in 
the following theorem, due to Impagliazzo and Kabanets \cite{Impagliazzo}. 
Here and later, for a positive integer $n$, we will denote by $[n]$ the set $\{1,\ldots,n\}$.\\

\begin{thm}[Impagliazzo \& Kabanets, $2010$]
\label{Impaglia} There exists a universal constant $c\geq 1$ satisfying the following.
Suppose that $X_1,\ldots,X_n$ are random variables such that $0\leq X_i\leq 1$, for $i=1,\ldots,n$. 
Assume further that 
there exists constant $\gamma\in (0,1)$ such that for all 
$A\subseteq [n]$ the following condition holds true:
\[ \mathbb{E}\left[ \prod_{i\in A}X_i\right] \leq \gamma^{|A|}, \]
where $|A|$ denotes the cardinality of $A$. 
Fix a real number $\varepsilon$ from the interval $\left(0, \frac{1}{\gamma}-1\right)$
and set $t = n\gamma + n\gamma \varepsilon$. Then
\[ \mathbb{P}\left[\sum_{i=1}^{n}X_i \geq  t\right] \leq c e^{-nD(\gamma(1+\varepsilon) || \gamma)},\]
where  $D(\gamma(1+\varepsilon) || \gamma)$
is the Kullback-Leibler distance between 
$\gamma(1+\varepsilon)$ and $\gamma$.
\end{thm}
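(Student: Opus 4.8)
The plan is to follow the standard Bernstein--Chernoff route, but to replace the independence assumption with the hypothesis $\mathbb{E}\!\left[\prod_{i\in A}X_i\right]\le\gamma^{|A|}$ at the level of the moment generating function. The first step is to reduce the tail probability to an MGF estimate via Markov's inequality: for any $h>0$,
\[
\mathbb{P}\!\left[\sum_{i=1}^n X_i\ge t\right]\le e^{-ht}\,\mathbb{E}\!\left[e^{h\sum_i X_i}\right]=e^{-ht}\,\mathbb{E}\!\left[\prod_{i=1}^n e^{hX_i}\right].
\]
Since $0\le X_i\le 1$, I would linearise each factor by the convexity bound $e^{hX_i}\le 1+(e^h-1)X_i$ for $X_i\in[0,1]$. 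Expanding the product $\prod_i\bigl(1+(e^h-1)X_i\bigr)=\sum_{A\subseteq[n]}(e^h-1)^{|A|}\prod_{i\in A}X_i$ and taking expectations, the covariance hypothesis gives
\[
\mathbb{E}\!\left[\prod_{i=1}^n e^{hX_i}\right]\le\sum_{A\subseteq[n]}(e^h-1)^{|A|}\gamma^{|A|}=\bigl(1+\gamma(e^h-1)\bigr)^n=\bigl(1-\gamma+\gamma e^h\bigr)^n,
\]
which is exactly the MGF one would get from $n$ independent Bernoulli$(\gamma)$ variables.

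Having arrived at $\mathbb{P}\!\left[\sum_i X_i\ge t\right]\le e^{-ht}\bigl(1-\gamma+\gamma e^h\bigr)^n$, the second step is to optimise over $h>0$. This is precisely the optimisation already carried out in Theorem~\ref{folklore} with $p$ replaced by $\gamma$: the infimum equals $H(n,\gamma,t)=e^{-nD(\gamma(1+\varepsilon)\,\|\,\gamma)}$ for $t=n\gamma(1+\varepsilon)$, provided $t\le n$, i.e.\ $\varepsilon<\frac1\gamma-1$, which is exactly the stated range. At this point one has in fact proved the \emph{stronger} statement with constant $c=1$; the looser $c\ge1$ in the theorem is only there to absorb a polynomial-in-$n$ prefactor if one instead routes through a combinatorial or sampling argument rather than the direct MGF computation, so I would simply remark that $c=1$ suffices here. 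The only mild subtlety is ensuring the linearisation step is applied in the correct direction and that all the coefficients $(e^h-1)^{|A|}\ge0$, so that the termwise bound on $\mathbb{E}[\prod_{i\in A}X_i]$ is preserved under summation — this uses $h>0$ crucially.

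I expect the conceptual heart of the argument to be the observation that the hypothesis is perfectly tailored to the expansion of $\prod_i\bigl(1+(e^h-1)X_i\bigr)$: the exponential is \emph{linearised} rather than expanded as a power series, so that only the quantities $\mathbb{E}[\prod_{i\in A}X_i]$ — and no higher moments — ever appear. If instead one expanded $e^{h\sum X_i}$ directly as $\sum_k h^k(\sum X_i)^k/k!$ one would need control of $\mathbb{E}[(\sum X_i)^k]$, i.e.\ of sums of products $\mathbb{E}[\prod_{i\in A}X_i^{m_i}]$ with repetitions, which is both messier and not what is assumed; using $X_i^{m}\le X_i$ for $X_i\in[0,1]$ would recover the bound but less cleanly. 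So the main ``obstacle'', such as it is, is recognising the right inequality $e^{hx}\le 1+(e^h-1)x$ on $[0,1]$; once that is in place, the rest is a verbatim invocation of the Hoeffding optimisation from Theorem~\ref{folklore}. A secondary point worth stating explicitly is that the argument never uses any lower bound on $\mathbb{E}[\prod_{i\in A}X_i]$, so no nonnegativity-of-covariance or association hypothesis is needed — only the one-sided upper bound in the statement.
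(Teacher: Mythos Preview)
Your argument is correct, and in fact sharper than advertised: you obtain $c=1$ for arbitrary $[0,1]$-valued random variables. Note, however, that the paper does \emph{not} supply its own proof of this theorem --- it is quoted as a result of Impagliazzo and Kabanets, with the reader referred to \cite{Impagliazzo}. The original argument there proceeds via a sampling/coupling construction rather than the MGF route, which is why a multiplicative constant appears; the paper even remarks that ``the exact value of $c$ does not seem to be known in the case of general $[0,1]$-valued random variables.'' Your linearisation $e^{hX_i}\le 1+(e^h-1)X_i$ followed by expansion into $\sum_{A}(e^h-1)^{|A|}\prod_{i\in A}X_i$ settles this with $c=1$ in two lines.

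It is worth contrasting your decomposition with the one the paper develops for its own results (Theorem~\ref{depHoeff} and Theorem~\ref{expfunct}). There the sum $\sum_i X_i$ is written as a convex combination via the weights $Z_A=\prod_{i\in A}X_i\prod_{i\notin A}(1-X_i)$, and the hypothesis needed is an upper bound on $\mathbb{E}[Z_A]$. That hypothesis is \emph{not} implied by the present one, because expanding $Z_A$ in terms of $\mathbb{E}\bigl[\prod_{i\in T}X_i\bigr]$ produces alternating signs. Your expansion of $\prod_i\bigl(1+(e^h-1)X_i\bigr)$ instead has all coefficients nonnegative, which is exactly what makes the one-sided hypothesis $\mathbb{E}\bigl[\prod_{i\in A}X_i\bigr]\le\gamma^{|A|}$ sufficient. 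So the two linearisations are genuinely different tools suited to different covariance assumptions, and yours is the natural one here.
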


Throughout the text, the empty product is interpreted as $1$. 
See \cite{Impagliazzo} for a neat proof of the previous 
result as well as for applications to direct products and expander graphs, among others.  
In the case of Bernoulli $0/1$ random variables it is shown in \cite{Impagliazzo}, Theorem $3.1$, that 
the constant $c$ in the previous theorem is equal to $1$;  however 
the exact value of $c$ 
does \emph{not} seem to be known in the case of general $[0,1]$-valued random variables. 
Moreover, in the case of Bernoulli $0/1$ random variables, the following refinement upon 
Theorem \ref{Impaglia} has been obtained by
Linial and Luria \cite{Linial}.\\

\begin{thm}[Linial \& Luria, $2014$]
\label{Linial} Let $X_1,\ldots,X_n$ be Bernoulli $0/1$ random variables. Let $\beta\in (0,1)$ 
be such that $\beta n$ is a positive integer and let $k$ be 
any positive integer such that $0< k < \beta n$. Then 
\[ \mathbb{P}\left[\sum_{i=1}^{n}X_i\geq \beta n\right] \leq \frac{1}{\binom{\beta n}{k}} \sum_{A: |A|=k} \mathbb{E}\left[\prod_{i\in A}X_i\right] . \]
\end{thm}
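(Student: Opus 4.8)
The plan is to estimate the probability directly via a first-moment / indicator argument, rather than through the exponential moment method that underlies Theorems \ref{folklore} and \ref{Impaglia}. Observe that the event $\{\sum_{i=1}^n X_i \geq \beta n\}$ is contained in the event that there exists a set $A$ of size $k$ with all $X_i=1$ for $i\in A$, since $\beta n > k$ and the $X_i$ are $0/1$-valued; more precisely, on the event $\{\sum_i X_i \geq \beta n\}$ the number of indices $i$ with $X_i=1$ is at least $\beta n$, hence the number of $k$-subsets $A$ with $\prod_{i\in A}X_i = 1$ is at least $\binom{\beta n}{k}$.

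The key step is therefore to introduce the random variable
\[
 N = \sum_{A:\,|A|=k} \prod_{i\in A} X_i ,
\]
which counts the number of $k$-subsets contained in the support $\{i: X_i=1\}$. The observation above says precisely that $\sum_i X_i \geq \beta n$ implies $N \geq \binom{\beta n}{k}$. Consequently
\[
 \mathbb{P}\left[\sum_{i=1}^n X_i \geq \beta n\right] \leq \mathbb{P}\left[N \geq \binom{\beta n}{k}\right],
\]
and now I would apply Markov's inequality to the nonnegative random variable $N$, giving
\[
 \mathbb{P}\left[N \geq \binom{\beta n}{k}\right] \leq \frac{\mathbb{E}[N]}{\binom{\beta n}{k}} = \frac{1}{\binom{\beta n}{k}} \sum_{A:\,|A|=k} \mathbb{E}\left[\prod_{i\in A} X_i\right],
\]
by linearity of expectation. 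This chain of inequalities is exactly the claimed bound.

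The only point requiring a little care — and what I would flag as the main (though modest) obstacle — is the combinatorial implication $\{\sum_i X_i \geq \beta n\} \subseteq \{N \geq \binom{\beta n}{k}\}$. One has to use that $X_i\in\{0,1\}$ so that $\sum_i X_i$ equals the cardinality $m$ of the random set $S=\{i:X_i=1\}$, that $\prod_{i\in A}X_i = \mathbf{1}[A\subseteq S]$, and that the number of $k$-subsets of a set of size $m\geq \beta n$ is $\binom{m}{k}\geq \binom{\beta n}{k}$, which uses monotonicity of $\binom{\cdot}{k}$ and the hypothesis $k<\beta n$ (so that $\binom{\beta n}{k}>0$ and the division is legitimate). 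Everything else is Markov plus linearity of expectation, so the proof is short; the content lies entirely in recognizing that a first-moment argument on the count of $k$-cliques-in-the-support is the right device, and that it requires no independence whatsoever.
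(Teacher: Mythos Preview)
Your proof is correct. At its core it is the same argument the paper gives, just packaged more directly: the pointwise inequality you use, namely $\binom{\beta n}{k}\,\mathbf{1}\bigl[\sum_i X_i \ge \beta n\bigr] \le \binom{\sum_i X_i}{k} = N$, is exactly what drives both of the paper's proofs. The paper's first proof obtains this by applying Theorem~\ref{depHoeff} to the convex, increasing, nonnegative function $g(m)=\binom{m}{k}$ (extended piecewise-linearly), which for Bernoulli variables amounts to Markov on $g\bigl(\sum_i X_i\bigr)=N$; the paper's second proof rephrases the same computation via a hypergeometric random variable $H_k$ (sampling $k$ indices without replacement and asking that all land in the support), noting that $\binom{n}{k}\,\mathbb{P}[H_k=k]=\mathbb{E}[N]$. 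Your route is the most elementary of the three and needs neither the convex-function framework nor the sampling interpretation; the paper's detours buy, respectively, a demonstration that Theorem~\ref{depHoeff} subsumes this bound, and a companion \emph{lower} bound (Theorem~\ref{Linialrefined}) that falls out of the hypergeometric formulation.
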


See \cite{Linial} for a very elementary proof of this result. Notice that the previous result reduces 
to Markov's inequality when $k=1$. 
It can be seen, using standard entropy estimates of binomials, that  Theorem \ref{Linial}  
reduces to Theorem \ref{Impaglia} in case one makes the 
additional assumtion $\mathbb{E}\left[\prod_{i\in A}X_i\right]\leq
\gamma^{|A|}$, for all $A\subseteq [n]$. 
We provide two proofs of Theorem \ref{Linial} in Section \ref{mainres}. The first proof   
is based upon the main result of our paper which provides a concentration bound, for sums 
of random variables,  
expressed in terms 
of expectations with respect to convex functions. 
More precisely, a basic ingredient in the proof of  
most results in this paper is the following theorem. 
Here and later, we will denote by $\partial_j[n]$ the family consisting of all 
subsets of $[n]$ whose cardinality equals $j \in \{0,1\ldots,n\}$.\\

\begin{thm}\label{depHoeff}  
Let $X_1,\ldots, X_n$ be random variables such that $0\leq X_i\leq 1$, 
for $i=1,\ldots,n$.  
For every subset $A\subseteq [n]$, define the random variable, $Z_A$, by setting
\[ Z_A = \prod_{i\in A} X_i \prod_{i\in [n]\setminus A} (1-X_i) .\]
Let $\mathcal{F}$ be the set consisting of all functions $f:\mathbb{R} \rightarrow [0,+\infty)$ 
that are \emph{increasing} and \emph{convex} and
set $p:=\frac{1}{n}\sum_{i=1}^{n}\mathbb{E}[X_i]$. If $t$ is a real number 
such that $np<t<n$, 
then $\sum_A \mathbb{E}\left[Z_A\right] =1$ and
\[ \mathbb{P}\left[\sum_{i=1}^{n}X_i \geq t\right] \leq \inf_{f\in \mathcal{F}} \frac{1}{f(t)}\; \mathbb{E}\left[f(Z)\right],  \]
where $Z$ is the random variable that takes values in the set $\{0,1,\ldots,n\}$ with probability 
\[ \mathbb{P}[Z=j]=\sum_{A\in \partial_j[n]}\mathbb{E}\big[ Z_A\big], \; \text{for}\; j=0,1,\ldots,n . \]
\end{thm}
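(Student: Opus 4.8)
The plan is to combine an elementary Markov-type step with a pointwise application of Jensen's inequality; the observation that makes everything work is that, for each realisation of $(X_1,\dots,X_n)$, the distribution described by $Z$ is exactly that of a sum of \emph{independent} Bernoulli variables whose means match the values of the $X_i$. To begin, the identity $\sum_A\mathbb{E}[Z_A]=1$ is immediate: each $X_i\in[0,1]$ gives $Z_A\ge0$, and expanding the product shows $\sum_{A\subseteq[n]}Z_A=\prod_{i=1}^n\bigl(X_i+(1-X_i)\bigr)=1$ pointwise, so the claim follows upon taking expectations, and in particular the numbers $\mathbb{P}[Z=j]$ do define a probability distribution on $\{0,\dots,n\}$. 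Now fix $f\in\mathcal{F}$ with $f(t)>0$ (functions with $f(t)=0$ contribute $+\infty$ to the infimum and may be ignored) and write $S=\sum_{i=1}^nX_i$. Since $f$ is increasing and nonnegative, $\{S\ge t\}\subseteq\{f(S)\ge f(t)\}$, so Markov's inequality yields $\mathbb{P}[S\ge t]\le \mathbb{E}[f(S)]/f(t)$ (here $\mathbb{E}[f(S)]<\infty$ because $S$ is bounded and $f$, being convex on $\mathbb{R}$, is continuous). It therefore suffices to prove that $\mathbb{E}[f(S)]\le\mathbb{E}[f(Z)]$ for every increasing convex $f\ge0$.

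For this, I would first rewrite the right-hand side, using the definition of $Z$, as
\[
\mathbb{E}[f(Z)]=\sum_{j=0}^n f(j)\sum_{A\in\partial_j[n]}\mathbb{E}[Z_A]=\mathbb{E}\!\left[\sum_{A\subseteq[n]}f(|A|)\,Z_A\right].
\]
The crux is a pointwise inequality: fix $x_1,\dots,x_n\in[0,1]$ and let $B_1,\dots,B_n$ be independent with $\mathbb{P}[B_i=1]=x_i$, $\mathbb{P}[B_i=0]=1-x_i$. Then $\mathbb{P}\bigl[\{i:B_i=1\}=A\bigr]=\prod_{i\in A}x_i\prod_{i\notin A}(1-x_i)$, so
\[
\sum_{A\subseteq[n]}f(|A|)\prod_{i\in A}x_i\prod_{i\notin A}(1-x_i)=\mathbb{E}\bigl[f(B_1+\cdots+B_n)\bigr]\ge f\bigl(\mathbb{E}[B_1+\cdots+B_n]\bigr)=f(x_1+\cdots+x_n),
\]
where the inequality is Jensen's inequality for the convex function $f$. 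Substituting the realisation $x_i=X_i$ and taking expectations over the joint law of $(X_1,\dots,X_n)$ gives $\mathbb{E}[f(S)]\le\mathbb{E}\bigl[\sum_{A}f(|A|)Z_A\bigr]=\mathbb{E}[f(Z)]$. Combining this with the Markov bound from the first paragraph and taking the infimum over $f\in\mathcal{F}$ completes the argument.

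I do not anticipate a genuine obstacle: everything hinges on recognising $\sum_{A}f(|A|)\prod_{i\in A}x_i\prod_{i\notin A}(1-x_i)$ as the expectation of $f$ at a Poisson-binomial variable of mean $\sum_i x_i$, after which Jensen is automatic. It is worth recording which hypotheses are used where: monotonicity and nonnegativity of $f$ enter only through the Markov step, convexity only through Jensen, and the restriction $np<t<n$ is not needed for the inequality itself — it is simply the meaningful range, since the same computation applied to $f(x)=x$ shows $\mathbb{E}[Z]=\mathbb{E}[S]=np$ while $Z\le n$, so for $t\notin(np,n)$ the bound is either vacuous or cannot beat the trivial bound $1$.
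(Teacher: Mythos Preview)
Your proof is correct and follows the same overall route as the paper: a Markov step followed by the pointwise inequality $f\bigl(\sum_i X_i\bigr)\le\sum_{A}Z_A\,f(|A|)$, then an expectation. The only difference is in how that pointwise inequality is justified. The paper first proves, by induction on $n$ (Lemma~\ref{mainlem}), the two identities $\sum_A\zeta_A=1$ and $\sum_i x_i=\sum_j j\sum_{A\in\partial_j[n]}\zeta_A$, and then appeals to convexity of $f$ on the resulting convex combination of integers. You instead observe that the weights $\zeta_A=\prod_{i\in A}x_i\prod_{i\notin A}(1-x_i)$ are exactly the point masses of a Poisson--binomial law with mean $\sum_i x_i$, which makes both identities automatic and turns the convexity step into a direct instance of Jensen's inequality. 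The mathematical content is identical; your probabilistic packaging is a little more conceptual and bypasses the induction, while the paper's version is self-contained and does not invoke an auxiliary random vector.
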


Let us remark that the assumption $t>np$, in the previous theorem, is essential.  
Indeed, a first step in proof of the previous theorem is an application of Markov's inequality:
\[\mathbb{P}\left[\sum_{i=1}^{n}X_i \geq t\right] \leq \frac{1}{f(t)}\; \mathbb{E}\left[f\left(\sum_{i=1}^{n}X_i\right)\right] ,\; \text{for}\; f \in \mathcal{F} . \]
Since $f$ is assumed to 
be convex, Jensen's inequality implies $\mathbb{E}\left[f(\sum_{i=1}^{n}X_i)\right] \geq f(np)$.
Since $f$ is additionally assumed to be increasing we have $f(t)\leq f(np)$,  
for $t\leq np$, and so the aforementioned application of Markov's inequality 
cannot yield a useful estimate. If $t<np$, then the previous theorem gives a useful upper bound on the 
probability $\mathbb{P}\left[\sum_{i=1}^{n}X_i \leq t\right] = \mathbb{P}\left[n-\sum_{i=1}^{n}X_i \geq n-t\right]$; 
so we may choose to work with the upper tail.  
We prove Theorem \ref{depHoeff}  in 
Section \ref{mainres}. The proof makes use of an elementary result  
(Lemma \ref{mainlem} below) that allows one to write a sum of $n$ real 
numbers from the interval $[0,1]$ as 
a convex combination of the set of integers $\{0,1,\ldots,n\}$. We also show that, in the case of \emph{independent} random variables,  Theorem \ref{depHoeff} 
reduces to Hoeffding's Theorem \ref{folklore}. It turns out that Theorem \ref{depHoeff} can be 
employed in order to obtain concentration inequalities for  
several sums of weakly dependent random variables such as 
martingale difference sequences, $k$-wise independent random variables and sums of 
Bernoulli $0/1$ random variables whose dependency structure is given in terms of a graph.
We illustrate this in the following subsections. 
Let us begin with a consequence of Theorem \ref{depHoeff} that may be seen as an 
generalisation of Hoeffding's Theorem \ref{folklore}. \\

\begin{thm}\label{expfunct} 
Suppose that $X_1,\ldots,X_n$ are random variables such that $0\leq X_i\leq 1$, for $i=1,\ldots,n$. 
Assume further that 
there exist constants $\gamma\in (0,1)$ and $\delta \in (0,1]$ such that for all 
$A\subseteq [n]$ the following condition holds true:
\[ \mathbb{E}\left[Z_A \right] \leq \gamma^{|A|}\cdot \delta^{n-|A|}, \; \text{where}\; Z_A =  \prod_{i\in A}X_i \prod_{i\in [n]\setminus A}(1-X_i) \]
and $|A|$ denotes the cardinality of $A$. Fix a real number $\varepsilon$ 
from the interval $\left(0, \frac{1}{\gamma}-1\right)$
and set $t = n\gamma + n\gamma \varepsilon$.
Then
\[ \mathbb{P}\left[\sum_{i=1}^{n}X_i \geq t\right] \leq \gamma^t \delta^{n-t} \left(\frac{n-t}{t}\right)^{t} \left(\frac{n}{n-t}\right)^{n} . \]
Furthermore, 
\[ \gamma^t \delta^{n-t} \left(\frac{n-t}{t}\right)^{t} \left(\frac{n}{n-t}\right)^{n}  \leq  e^{-n\left\{D(\gamma(1+\varepsilon) || \gamma) - \left(1-\gamma(1+\varepsilon)\right) \ln \frac{\delta}{1-\gamma} \right\}} , \]
where $D(\gamma(1+\varepsilon) || \gamma)$ denotes the Kullback-Leibler distance between 
$\gamma(1+\varepsilon)$ and $\gamma$.  
\end{thm}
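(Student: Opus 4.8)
The plan is to run Hoeffding's exponential-moment argument through Theorem~\ref{depHoeff}, using the test functions $f(x)=e^{hx}$ with $h>0$. Two quick reductions come first. Summing the hypothesis over all $A$ and using $\sum_{A\subseteq[n]}Z_A=\prod_{i=1}^{n}\big(X_i+(1-X_i)\big)\equiv1$ shows that $1=\sum_{A}\mathbb{E}[Z_A]\le\sum_{j=0}^{n}\binom{n}{j}\gamma^{j}\delta^{n-j}=(\gamma+\delta)^{n}$, so the hypothesis forces $\gamma+\delta\ge1$; this innocuous fact is what ultimately makes the optimal exponent positive. Also, we may assume $np<t<n$, since otherwise a short computation (the exponential-moment bound derived below together with Jensen's inequality) shows that the claimed bound is already at least $1$, so there is nothing to prove. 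Under $np<t<n$, Theorem~\ref{depHoeff} applies.

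For fixed $h>0$ the function $f(x)=e^{hx}$ is nonnegative, increasing and convex, hence belongs to $\mathcal{F}$. Grouping the subsets of $[n]$ by cardinality,
\[
\mathbb{E}\big[f(Z)\big]=\sum_{j=0}^{n}e^{hj}\,\mathbb{P}[Z=j]=\sum_{A\subseteq[n]}e^{h|A|}\,\mathbb{E}[Z_A]\le\sum_{A\subseteq[n]}e^{h|A|}\gamma^{|A|}\delta^{n-|A|}=\big(\delta+\gamma e^{h}\big)^{n},
\]
where the inequality is the hypothesis and the last equality is the binomial theorem. Theorem~\ref{depHoeff} then yields $\mathbb{P}\big[\sum_{i=1}^{n}X_i\ge t\big]\le\inf_{h>0}e^{-ht}\big(\delta+\gamma e^{h}\big)^{n}$, and it remains only to carry out this one-variable minimization.

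The function $h\mapsto-ht+n\ln(\delta+\gamma e^{h})$ is convex, and setting its derivative equal to zero gives the unique critical point determined by $e^{h_{\star}}=\frac{t\delta}{\gamma(n-t)}$. This critical point is admissible, that is $h_{\star}>0$: indeed $e^{h_{\star}}>1$ is equivalent to $t(\gamma+\delta)>\gamma n$, which holds because $t=n\gamma(1+\varepsilon)>n\gamma$ while $\gamma+\delta\ge1$. Substituting $e^{h_{\star}}$ back, and using $\delta+\gamma e^{h_{\star}}=\frac{n\delta}{n-t}$ together with $e^{-h_{\star}t}=\big(\frac{\gamma(n-t)}{t\delta}\big)^{t}$, the bound collapses to $\gamma^{t}\delta^{n-t}\big(\frac{n-t}{t}\big)^{t}\big(\frac{n}{n-t}\big)^{n}$, which is exactly the asserted inequality.

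For the Kullback-Leibler reformulation I would simply take logarithms. Writing $q:=t/n=\gamma(1+\varepsilon)\in(\gamma,1)$, splitting $n=t+(n-t)$ inside the logarithms and rearranging gives
\[
\frac{1}{n}\ln\Big[\gamma^{t}\delta^{n-t}\big(\frac{n-t}{t}\big)^{t}\big(\frac{n}{n-t}\big)^{n}\Big]=q\ln\gamma+(1-q)\ln\delta-q\ln q-(1-q)\ln(1-q),
\]
and comparing with the expansion of $D(q||\gamma)$ shows the right-hand side equals $-D(q||\gamma)+(1-q)\ln\frac{\delta}{1-\gamma}$; thus the inequality in the statement is in fact an identity. (Here $1-q>0$, and $\delta\ge1-\gamma$ by the first reduction, so $\ln\frac{\delta}{1-\gamma}\ge0$; for $\delta=1-\gamma$ one recovers the Chernoff-Hoeffding exponent, in agreement with Theorem~\ref{folklore}.) The only genuinely load-bearing point in the whole argument is the admissibility $h_{\star}>0$ of the optimal exponent, and that is exactly where the structural constraint $\gamma+\delta\ge1$ is used; the rest is the calculus already underlying Hoeffding's proof.
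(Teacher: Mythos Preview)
Your proof is correct and follows essentially the same route as the paper: apply Theorem~\ref{depHoeff} with $f(x)=e^{hx}$, bound $\mathbb{E}[Z_A]$ by the hypothesis, collapse via the binomial theorem to $e^{-ht}(\delta+\gamma e^{h})^{n}$, and optimize in $h$. The paper's proof is terser and does not explicitly verify $h_\star>0$; your use of $\gamma+\delta\ge1$ to check admissibility, and your observation that the second displayed inequality is in fact an identity, are welcome additions but do not change the argument.
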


We prove this result in Section \ref{mainres}. 
In other words,  the previous result adjusts the factor 
$\left(1-\gamma(1+\varepsilon)\right) \ln \frac{\delta}{1-\gamma}$ to the Chernoff-Hoeffding bound in 
retaliation for the fact that the random variables were not assumed to be independent. 
We remark that we always have $\gamma + \delta\geq 1$. To see this notice that, 
since $\sum_A \mathbb{E}\left[Z_A\right] =1$, the condition of the previous theorem implies  
\[ 1 = \sum_{j=0}^{n} \sum_{A\in\partial_j[n]} \mathbb{E}\left[Z_A\right] \leq \sum_{j=0}^{n}\binom{n}{j}\gamma^j \delta^{n-j}  = \left(\gamma + \delta \right)^n.\] 
Notice also that the 
factor $\left(1-\gamma(1+\varepsilon)\right) \ln \frac{\delta}{1-\gamma}$ is not very large  
(for example, it less than $(1-\gamma)\ln\frac{1}{1-\gamma}\leq \frac{1}{e}$) and that 
the bound of Theorem \ref{expfunct} involves no unknown constants. \\

\begin{remark} Theorem \ref{expfunct} should be considered as complementary to Theorem \ref{Impaglia}, 
in the sense that it may be applicable when an estimate of the form   
$\mathbb{E}\left[ \prod_{i\in A}X_i\right] \leq \gamma^{|A|}$ is not available and, instead, an estimate of   
the form $\mathbb{E}\left[Z_A \right] \leq \gamma^{|A|}\cdot \delta^{n-|A|}$ is available. 
Let us also remark that an estimate of the former form cannot be concluded from an estimate of the
later form and so 
one cannot conclude Theorem \ref{expfunct} 
as a consequence of Theorem \ref{Impaglia}. 
To be more precise, let us look at the case of Bernoulli $0/1$ random variables. 
In that case the constant $c$ in Theorem \ref{Impaglia} equals $1$ (see \cite{Impagliazzo}, 
Theorem $3.1$). 
Under the assumption   
$\mathbb{E}\left[Z_A \right] \leq \gamma^{|A|}\cdot \delta^{n-|A|}$, for all $A\subseteq [n]$, and 
since the random variables are Bernoulli we have
\begin{eqnarray*} \mathbb{E}\left[ \prod_{i\in A}X_i\right] &=& \sum_{T: A\subseteq T} 
\mathbb{E}\left[Z_T\right] \\
&\leq&\sum_{j=0}^{n-|A|}\binom{n-|A|}{j} \gamma^{|A|+j} \delta^{n-|A|-j} = \gamma^{|A|} \left(\gamma +\delta\right)^{n-|A|}
\end{eqnarray*}
and so Theorem \ref{expfunct} is dealing with an estimate   
on $\mathbb{E}\left[ \prod_{i\in A}X_i\right]$ that is, for fixed $\gamma$,  larger than the corresponding estimate in Theorem 
\ref{Impaglia}. 
\end{remark}

The second proof of Theorem \ref{Linial} is obtained using a coupling argument.
In fact, we prove a bit more. \\

\begin{thm}
\label{Linialrefined} Let $X_1,\ldots,X_n$ be Bernoulli $0/1$ random variables. Let $\beta\in (0,1)$ 
be such that $\beta n$ is a positive integer and let $k$ be 
any positive integer such that $0< k < \beta n$. Then  
\[  \frac{1}{\binom{n}{\beta n}} \sum_{A: |A|=\beta n} \mathbb{E}\left[\prod_{i\in A}X_i\right] \leq  \mathbb{P}\left[\sum_{i=1}^{n}X_i\geq \beta n\right] \leq \frac{1}{\binom{\beta n}{k}} \sum_{A: |A|=k} \mathbb{E}\left[\prod_{i\in A}X_i\right] . \]
\end{thm}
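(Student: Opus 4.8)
The plan is to derive both inequalities from a single identity, obtained by a randomisation (coupling) argument: introduce an auxiliary subset $A\subseteq[n]$ chosen uniformly at random among all subsets of a prescribed cardinality $m$, \emph{independently} of the vector $(X_1,\ldots,X_n)$. Write $S=\sum_{i=1}^{n}X_i$ and $\mathrm{supp}(X)=\{i\in[n]:X_i=1\}$, so that $S=|\mathrm{supp}(X)|$. Since the $X_i$ are $\{0,1\}$-valued, $\prod_{i\in A}X_i=\mathbf{1}[A\subseteq\mathrm{supp}(X)]$, and therefore, taking expectation first over $A$ and then over $X$,
\[ \frac{1}{\binom{n}{m}}\sum_{A:\,|A|=m}\mathbb{E}\Big[\prod_{i\in A}X_i\Big] \;=\; \mathbb{E}_{X,A}\big[\mathbf{1}[A\subseteq\mathrm{supp}(X)]\big] \;=\; \mathbb{E}_{X}\Big[\tfrac{\binom{S}{m}}{\binom{n}{m}}\Big], \]
where $\binom{s}{m}$ is read as $0$ when $s<m$; the last equality is the elementary identity $\sum_{A:\,|A|=m}\mathbf{1}[A\subseteq\mathrm{supp}(X)]=\binom{S}{m}$. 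Thus the theorem reduces to comparing $\mathbb{E}\!\left[\binom{S}{m}\right]$ with $\binom{\beta n}{k}\,\mathbb{P}[S\geq\beta n]$ when $m=k$, and with $\binom{n}{\beta n}\,\mathbb{P}[S\geq\beta n]$ when $m=\beta n$.

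Both comparisons are then obtained from the monotonicity of binomial coefficients in the upper argument, applied pointwise on the sample space. For the right-hand (upper) inequality I would take $m=k$: on the event $\{S\geq\beta n\}$ we have $\binom{S}{k}\geq\binom{\beta n}{k}$, while on $\{S<\beta n\}$ the quantity $\binom{S}{k}$ is nonnegative, so $\binom{S}{k}\geq\binom{\beta n}{k}\,\mathbf{1}[S\geq\beta n]$ pointwise; taking expectations and invoking the displayed identity gives $\mathbb{P}[S\geq\beta n]\leq\binom{\beta n}{k}^{-1}\sum_{|A|=k}\mathbb{E}[\prod_{i\in A}X_i]$, which re-proves Theorem \ref{Linial}. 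For the left-hand (lower) inequality I would take $m=\beta n$: since $S\leq n$ we have $\binom{S}{\beta n}\leq\binom{n}{\beta n}$ whenever $S\geq\beta n$, and $\binom{S}{\beta n}=0$ when $S<\beta n$, so $\binom{S}{\beta n}\leq\binom{n}{\beta n}\,\mathbf{1}[S\geq\beta n]$ pointwise; taking expectations and dividing by $\binom{n}{\beta n}$ yields $\binom{n}{\beta n}^{-1}\sum_{|A|=\beta n}\mathbb{E}[\prod_{i\in A}X_i]\leq\mathbb{P}[S\geq\beta n]$.

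There is no serious analytic obstacle; the whole content sits in the first paragraph, namely the realisation that averaging $\mathbb{E}[\prod_{i\in A}X_i]$ over a uniformly random $m$-set converts it into the hypergeometric-type expression $\mathbb{E}[\binom{S}{m}]/\binom{n}{m}$. The only points requiring a little care are the boundary convention $\binom{s}{m}=0$ for $s<m$ (consistent with $\prod_{i\in A}X_i=0$ whenever $A\not\subseteq\mathrm{supp}(X)$) and the hypothesis $0<k<\beta n\leq n$, which ensures all binomial coefficients appearing are positive and that the monotonicity steps are used within their valid range.
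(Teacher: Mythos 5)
Your proposal is correct and is essentially the paper's own argument: your uniformly random $m$-set $A$ with the identity $\tfrac{1}{\binom{n}{m}}\sum_{|A|=m}\mathbb{E}\bigl[\prod_{i\in A}X_i\bigr]=\mathbb{E}\bigl[\binom{S}{m}\bigr]/\binom{n}{m}$ is exactly the paper's coupling with the hypergeometric-type variable $H_m$ (the number of successes in $m$ draws without replacement), and your pointwise bounds $\binom{S}{k}\geq\binom{\beta n}{k}\mathbf{1}[S\geq\beta n]$ and $\binom{S}{\beta n}\leq\binom{n}{\beta n}\mathbf{1}[S\geq\beta n]$ correspond precisely to the paper's two estimates on $\mathbb{P}[H_k=k]$ and $\mathbb{P}[H_{\beta n}=\beta n]$. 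No gap; your phrasing in terms of $\binom{S}{m}$ is just a more compact way of writing the same proof.
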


In Section \ref{mainres}  
we provide two proofs of the upper bound. 
Let us remark 
that the second proof is basically a paraphrasis, in probabilistic language, of the combinatorial proof from \cite{Linial}.
Our proof is longer and rather uglier but reveals a way to think of  
a lower bound.
Let us also remark that Theorem \ref{Impaglia} and Theorem \ref{Linial}   
may be employed in order to obtain concentration 
bounds for particular sums of dependent indicators that are encountered in the theory of 
Erd\H{o}s-R\'enyi random graphs; we illustrate this in Section \ref{isolated}. 
Moreover, we obtain the following result that is related to Theorem \ref{Impaglia}. \\

\begin{thm}\label{bincoupling} 
Suppose that $X_1,\ldots,X_n$ are random variables such that $0\leq X_i\leq 1$, for $i=1,\ldots,n$. 
Set $p =\frac{1}{n}\sum_i \mathbb{E}\left[X_i\right]$ and fix a real number $t$ such that 
$np + 1 < t < n$.
If $\varepsilon_0>0$ is such that $t-1=np+np\varepsilon_0$, then 
\[ \mathbb{P}\left[\sum_{i=1}^{n}X_i \geq  t\right] \leq 2 e^{-nD(p(1+\varepsilon_0) || p)},\]
where  $D(p(1+\varepsilon_0) || p)$
is the Kullback-Leibler distance between 
$p(1+\varepsilon_0)$ and $p$.
\end{thm}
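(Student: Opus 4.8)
The plan is to pass from the bounded variables $X_1,\ldots,X_n$ to a sum of Bernoulli $0/1$ variables by a coupling, and then to invoke the Bernoulli case of Theorem~\ref{Impaglia}, in which the universal constant equals $1$; along the way we use the covariance bound $\mathbb{E}\bigl[\prod_{i\in A}X_i\bigr]\le p^{\,|A|}$ for every $A\subseteq[n]$ (the hypothesis of Theorem~\ref{Impaglia} with $\gamma=p$). On an enlarged probability space, introduce $B_1,\ldots,B_n$ that are, conditionally on $(X_1,\ldots,X_n)$, independent with $\mathbb{P}\bigl[B_i=1\mid X_1,\ldots,X_n\bigr]=X_i$, and put $Z:=\sum_{i=1}^{n}B_i$. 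Then $\mathbb{E}[B_i]=\mathbb{E}[X_i]$; for every $A\subseteq[n]$ one has $\mathbb{E}\bigl[\prod_{i\in A}B_i\bigr]=\mathbb{E}\bigl[\prod_{i\in A}X_i\bigr]\le p^{\,|A|}$; and, conditionally on $(X_1,\ldots,X_n)$, the variable $Z$ is Poisson--binomial with mean $S:=\sum_{i=1}^{n}X_i$. (Unconditionally, $Z$ is exactly the random variable of Theorem~\ref{depHoeff}, since $\mathbb{P}[Z=j]=\sum_{A\in\partial_j[n]}\mathbb{E}[Z_A]$.)

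First I would prove the reduction $\mathbb{P}[S\ge t]\le 2\,\mathbb{P}[Z\ge t-1]$. Condition on $(X_1,\ldots,X_n)$ and use the classical fact (going back to Jogdeo and Samuels) that a Poisson--binomial random variable puts mass at least $\tfrac12$ on the integers at or above the largest integer not exceeding its mean, i.e. its median differs from its mean by less than $1$. On the event $\{S\ge t\}$ the conditional law of $Z$ has mean $S\ge t$, so $\mathbb{P}\bigl[Z\ge S-1\mid X\bigr]\ge\tfrac12$, and since $S-1\ge t-1$ there, also $\mathbb{P}\bigl[Z\ge t-1\mid X\bigr]\ge\tfrac12$ on $\{S\ge t\}$; integrating over $(X_1,\ldots,X_n)$ gives $\mathbb{P}[Z\ge t-1]\ge\tfrac12\,\mathbb{P}[S\ge t]$. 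Next, the $B_i$ are Bernoulli $0/1$ and satisfy the covariance bound with constant $p$, and the hypothesis $np+1<t<n$ forces $np<\lceil t-1\rceil<n$, so the Bernoulli case of Theorem~\ref{Impaglia}, applied to $B_1,\ldots,B_n$ with $\gamma=p$ and threshold $\lceil t-1\rceil$, yields
\[
\mathbb{P}[Z\ge t-1]\;=\;\mathbb{P}\Bigl[\,\textstyle\sum_{i=1}^{n}B_i\ge\lceil t-1\rceil\,\Bigr]\;\le\;e^{-nD(\lceil t-1\rceil/n\,\|\,p)}\;\le\;e^{-nD(p(1+\varepsilon_0)\,\|\,p)},
\]
the last inequality because $q\mapsto D(q\,\|\,p)$ is increasing on $(p,1)$ and $\lceil t-1\rceil/n\ge(t-1)/n=p(1+\varepsilon_0)>p$. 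Combining the two displays gives $\mathbb{P}[S\ge t]\le 2\,e^{-nD(p(1+\varepsilon_0)\,\|\,p)}$.

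The part that needs care is the first step. The factor $2$ and the shift $t\mapsto t-1$ are precisely the toll for replacing the continuous sum $S$ by the integer-valued $Z$: a Chebyshev estimate on the lower tail of $Z$ about its conditional mean is too crude (the conditional variance $\sum_i X_i(1-X_i)$ can well exceed $1$), so one genuinely relies on the median/mean proximity for Poisson--binomial laws. One should also track the rounding: on $\{S\ge t\}$ the median of the conditional law of $Z$ is at least $\lfloor S\rfloor\ge\lfloor t\rfloor\ge\lceil t-1\rceil$, which is exactly what makes $\mathbb{P}\bigl[Z\ge t-1\mid X\bigr]\ge\tfrac12$ there; and one must check that $\lceil t-1\rceil$ is an admissible threshold for Theorem~\ref{Impaglia}, i.e. lies strictly between $np$ and $n$, which is precisely what the hypothesis $np+1<t<n$ secures.
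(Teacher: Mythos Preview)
Your overall route --- couple each $X_i$ to a conditional Bernoulli $B_i$, use a median/mean comparison to extract the factor $2$ and the shift $t\mapsto t-1$, then bound $\mathbb{P}\bigl[\sum_i B_i\ge t-1\bigr]$ --- is exactly the paper's. The only cosmetic difference in the middle step is that the paper passes through Hoeffding's 1956 comparison (Theorem~\ref{poissontrials}) to a genuine binomial and then cites the Kaas--Burhman median bound, whereas you appeal directly to the Jogdeo--Samuels mode/median result for the Poisson--binomial; these yield the same conclusion.

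The genuine gap is your use of the covariance bound $\mathbb{E}\bigl[\prod_{i\in A}X_i\bigr]\le p^{|A|}$. This is \emph{not} a hypothesis of Theorem~\ref{bincoupling}: the statement assumes only $0\le X_i\le 1$ and sets $p=\tfrac1n\sum_i\mathbb{E}[X_i]$, with no constraint whatsoever on joint moments. You need that bound to feed the $B_i$ into the Bernoulli case of Theorem~\ref{Impaglia}, and without it the inequality you want on $\mathbb{P}[Z\ge t-1]$ is false in general (take $X_1=\cdots=X_n$ all equal to a single $\mathrm{Ber}(p)$ variable: then $\mathbb{P}[S\ge t]=p$ for every $t\in(0,n]$, so no exponentially decaying bound can hold). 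The paper does not go through Theorem~\ref{Impaglia} at this last step; it finishes by invoking Hoeffding's Theorem~\ref{folklore} on $\sum_i B_i$. Note, however, that the $B_i$ are only \emph{conditionally} independent given $X$, so Theorem~\ref{folklore} does not literally apply either --- the same counterexample bites the statement as written. In short: your coupling and factor-$2$ reduction are fine and match the paper, but the final Chernoff step, as you have written it, imports an assumption the theorem does not make; you should flag it as an added hypothesis rather than ``use it along the way''.
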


We prove this result in Section \ref{mainres}. Notice that the constant $c$ 
of Theorem \ref{Impaglia} has been replaced by $2$, but the parameter $\varepsilon_0$ is smaller 
than the corresponding parameter $\varepsilon$ in Theorem \ref{Impaglia}, which results to a slightly 
larger exponential bound. \\
It turns out that Theorem \ref{depHoeff} applies 
to sums of martingale difference 
sequences. This is the content of the following subsection. 

\subsection{Martingales}\label{subsectionmartingale}

Martingales are sequences of  
random variables that exhibit a rather simple dependency structure. 
More precisely,  a sequence $X_0,X_1\ldots$ of integrable random variables is called a \emph{martingale} if 
\[ \mathbb{E}\big[X_{n+1}| \mathcal{A}_n\big] = X_n, \; \text{for all}\; n\geq 0 ,\]
where $\mathcal{A}_n$ is the $\sigma$-algebra generated by the random variables $X_0,X_1,\ldots,X_n$.
A sequence $Y_1,Y_2\ldots$ of integrable random variables is called a \emph{martingale difference sequence} 
if 
\[ \mathbb{E}\big[Y_n| \mathcal{F}_{n-1}\big]=0, \; \text{for all}\; n\geq 1 ,\]
where $\mathcal{F}_{n-1}$ is the $\sigma$-algebra generated by the random variables $Y_1,\ldots,Y_{n-1}$
and $\mathcal{F}_0$ is the trivial $\sigma$-algebra.  Given a martingale $X_0,X_1,\ldots$, one can 
obtain a martingale difference sequence by setting $Y_k = X_k -X_{k-1}, k=1,2,\ldots,$ and, 
conversely, given $X_0$ and a martingale difference sequence $Y_1,Y_2,\ldots,$ one can obtain a 
martingale by setting $X_k = X_0 + \sum_{i=1}^{k}Y_i$. Therefore, one may choose to work with either 
sequence. Theorem \ref{depHoeff} allows to prove a 
refined version of a 
well-known result, due to McDiarmid \cite{McDiarmid}, that
provides a 
concentration inequality for sums of martingale difference sequences. McDiarmid's 
inequality has been proven to 
be useful in several questions in combinatorics and probability and reads as follows.   \\

\begin{thm}[McDiarmid, $1989$]
\label{McDiar}
\label{genmcdm}  Let $Y_1,\ldots,Y_n$ be a martingale difference sequence with $-p_i\leq Y_i\leq 1-p_i$, for $i=1,\ldots,n$ 
and suitable constants $p_i\in (0,1)$. Set $p = \frac{1}{n}\sum_{i=1}^{n} p_i$. 
Then, for any real $t$ such that $t\in (0,1-p)$, we have 
\[  \mathbb{P}\left[\sum_{i=1}^{n} Y_i \geq nt  \right]   \leq \inf_{h>0} \; e^{-(hnt+hnp)} 
\mathbb{E}\left[e^{hB_{n,p}}\right] , \]
where  $B_{n,p}$ is a binomial random variable of parameters $n$ and $p$.
Furthermore, 
\[ \inf_{h>0} \; e^{-(hnt+hnp)} 
\mathbb{E}\left[e^{hB_{n,p}}\right] \leq \left\{ \left(\frac{p}{p+t}\right)^{p+t} 
\left(\frac{1-p}{1-p-t} \right)^{1-p-t} \right\}^n := H_m(n,p,t)  . \]
and the following foolproof version holds true:
\[ H_m(n,p,t)   \leq \text{exp}\left(-2nt^2\right)  .\]
\end{thm}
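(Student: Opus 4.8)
The plan is to reduce the statement to Theorem~\ref{depHoeff} and then run the standard Chernoff optimisation. First I would put $X_i:=Y_i+p_i$, so that each $X_i$ is $[0,1]$-valued, $\mathbb{E}[X_i]=p_i$ (since $\mathbb{E}[Y_i\mid\mathcal{F}_{i-1}]=0$ forces $\mathbb{E}[Y_i]=0$), and $\frac1n\sum_i\mathbb{E}[X_i]=p$. The event $\{\sum_iY_i\ge nt\}$ coincides with $\{\sum_iX_i\ge n(p+t)\}$, and the hypothesis $t\in(0,1-p)$ is exactly what guarantees $np<n(p+t)<n$, so Theorem~\ref{depHoeff} applies with threshold $n(p+t)$. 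Plugging the admissible function $f(x)=e^{hx}$ (for $h>0$) into that theorem, and using the pointwise identity $\sum_{A\subseteq[n]}e^{h|A|}Z_A=\prod_{i=1}^{n}\bigl(e^hX_i+(1-X_i)\bigr)=\prod_{i=1}^{n}\bigl(1+(e^h-1)X_i\bigr)$ together with $\mathbb{E}[f(Z)]=\sum_{A\subseteq[n]}e^{h|A|}\mathbb{E}[Z_A]$, I obtain
\[ \mathbb{P}\Bigl[\,\sum_{i=1}^{n}Y_i\ge nt\,\Bigr]\ \le\ e^{-hn(p+t)}\,\mathbb{E}\Bigl[\prod_{i=1}^{n}\bigl(1+(e^h-1)X_i\bigr)\Bigr]\qquad(h>0). \]

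The crucial step is to evaluate this last expectation via the martingale-difference structure. Expanding $\prod_i(1+(e^h-1)X_i)$ into $\sum_{S\subseteq[n]}(e^h-1)^{|S|}\prod_{i\in S}X_i$ and conditioning successively on $\mathcal{F}_{i-1}$, starting from the largest index of $S$ downward, shows that $\mathbb{E}\bigl[\prod_{i\in S}X_i\bigr]=\prod_{i\in S}p_i$ for every $S\subseteq[n]$, whence $\mathbb{E}\bigl[\prod_i(1+(e^h-1)X_i)\bigr]=\prod_i\bigl(1+(e^h-1)p_i\bigr)$. Since $x\mapsto\ln\bigl(1+(e^h-1)x\bigr)$ is concave on $[0,1]$ for $h>0$, Jensen's inequality gives $\prod_i(1+(e^h-1)p_i)\le\bigl(1+(e^h-1)p\bigr)^n=(1-p+pe^h)^n=\mathbb{E}\bigl[e^{hB_{n,p}}\bigr]$. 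Combining, $\mathbb{P}[\sum_iY_i\ge nt]\le e^{-(hnt+hnp)}\mathbb{E}[e^{hB_{n,p}}]$ for every $h>0$, and taking the infimum over $h>0$ yields the first displayed inequality of the theorem. (One could instead bypass Theorem~\ref{depHoeff} and bound $\mathbb{E}[e^{hY_i}\mid\mathcal{F}_{i-1}]\le e^{-hp_i}(1-p_i+p_ie^h)$ directly, using convexity of $y\mapsto e^{hy}$ on $[-p_i,1-p_i]$ and $\mathbb{E}[Y_i\mid\mathcal{F}_{i-1}]=0$; the two routes amount to the same computation.)

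For the second displayed inequality I would carry out the usual Chernoff minimisation of $\psi(h):=-h(p+t)+\ln(1-p+pe^h)$. Solving $\psi'(h)=0$ gives $e^h=\frac{(p+t)(1-p)}{p(1-p-t)}$, which exceeds $1$ precisely because $t>0$, hence is admissible; substituting back (note $1-p+pe^h=\frac{1-p}{1-p-t}$) yields $\psi(h)=\ln\!\left[\left(\frac{p}{p+t}\right)^{p+t}\left(\frac{1-p}{1-p-t}\right)^{1-p-t}\right]$, so $\inf_{h>0}e^{-(hnt+hnp)}\mathbb{E}[e^{hB_{n,p}}]\le H_m(n,p,t)$ (with equality, in fact). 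Finally, for the foolproof form one notes that $-\frac1n\ln H_m(n,p,t)$ is exactly the Kullback--Leibler distance $D(p+t\,\|\,p)$, and invokes the elementary bound $D(q\|p)\ge 2(q-p)^2$ recalled after Theorem~\ref{folklore} (or argues directly that $g(t):=D(p+t\,\|\,p)-2t^2$ satisfies $g(0)=g'(0)=0$ and $g''(t)=\frac{1}{(p+t)(1-p-t)}-4\ge0$, since $(p+t)(1-p-t)\le\frac14$), giving $H_m(n,p,t)\le e^{-2nt^2}$.

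I expect the only genuinely delicate point to be the second step: one must use the martingale-difference property to get $\mathbb{E}[\prod_{i\in S}X_i]=\prod_{i\in S}p_i$ for \emph{every} subset $S$, not merely $\mathbb{E}[X_i]=p_i$, and it is the ensuing Jensen step --- replacing each $p_i$ by the average $p$ --- that is responsible for the inequality holding uniformly over the non-identical ranges $[-p_i,1-p_i]$. Everything else is routine Chernoff bookkeeping.
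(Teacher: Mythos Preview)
Your proposal is correct and follows essentially the same route as the paper's proof (embedded in the proof of Theorem~\ref{genmcdm}): apply Theorem~\ref{depHoeff} to $X_i=Y_i+p_i$ with $f(x)=e^{hx}$, use the martingale-difference property to factor the resulting expectation, average the $p_i$'s to $p$, and then do the standard Chernoff optimisation and Kullback--Leibler bound. The only cosmetic differences are that the paper packages the martingale step as a bound on $\mathbb{E}[Z_A]$ (Lemma~\ref{productmartingale}) rather than on $\mathbb{E}\bigl[\prod_{i\in S}X_i\bigr]$, and invokes Hoeffding's 1956 result (Theorem~\ref{successes}) for the averaging step where you use Jensen directly; for the exponential function these are the same computation.
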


See McDiarmid \cite{McDiarmid}, Theorem $6.1$, for a proof of this result. 
Let us remark that the function $H_m(n,p,t)$ is related to the Hoeffding function; in fact, given $n,p,t$ as 
in Theorem \ref{McDiar}, we have 
$H_m(n,p,t) = H(n,p,nt)$.  
Using Theorem \ref{depHoeff} we deduce the following refined version of the previous result. \\

\begin{thm}
\label{genmcdm}  Let $Y_1,\ldots,Y_n$ be a martingale difference sequence with $-p_i\leq Y_i\leq 1-p_i$, for $i=1,\ldots,n$ 
and suitable constants $p_i\in (0,1)$. Set $p = \frac{1}{n}\sum_{i=1}^{n} p_i$. 
Let $\mathcal{F}$ the set consisting of all functions $f:\mathbb{R} \rightarrow [0,+\infty)$ 
that are \emph{increasing} and \emph{convex}.
Then, for any real $t$ such that $t\in (0,1-p)$, we have 
\[  \mathbb{P}\left[\sum_{i=1}^{n} Y_i \geq nt  \right]   \leq \inf_{f\in \mathcal{F}} \; \frac{1}{f(nt+np)} 
\mathbb{E}\left[f\left(B_{n,p}\right)\right] , \]
where  $B_{n,p}$ is a binomial random variable of parameters $n$ and $p$.
Furthermore, if $n(p+t)$ is a \emph{positive integer} and $t$ satisfies  
$\frac{p(1-p)(e-1)}{1-p+ep} <t < 1-p$, we have
\begin{eqnarray*} \inf_{f\in \mathcal{F}} \; \frac{1}{f(nt+np)} 
\mathbb{E}\left[f\left(B_{n,p}\right)\right]  &\leq& \frac{h+1}{e^{h}}\left\{ H_m(n,p,t) - T(n,p,t)\right\} \\
&+& \left(1-\frac{1+h}{e^h}\right) \mathbb{P}\left[B_{n,p}=n(p+t)\right],
\end{eqnarray*}
where 
\[T(n,p,t) :=\sum_{j\leq n(p+t)-1}e^{h(j-n(p+t))} \mathbb{P}\left[B_{n,p}=j\right], \]
$H_m(n,p,t)$ is the function defined in Theorem \ref{McDiar}
and $h$ is the positive real satisfying 
\[ e^{-hn(t+p)} \;
\mathbb{E}\left[e^{hB_{n,p}}\right]  = \inf_{s>0} \; e^{-(snt+snp)} \;
\mathbb{E}\left[e^{sB_{n,p}}\right] . \]
The bound is less than the bound of Theorem \ref{McDiar}.
\end{thm}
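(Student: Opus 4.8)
\emph{Proof plan.} The plan is to deduce the statement from Theorem~\ref{depHoeff}, applied to the shifted variables $X_i:=Y_i+p_i$, and then to estimate the resulting expectation by testing against one carefully chosen convex function. Concretely, since $-p_i\le Y_i\le 1-p_i$ we have $0\le X_i\le1$, and, writing $\mathcal{F}_i$ for the $\sigma$-algebra generated by $Y_1,\dots,Y_i$ (equivalently by $X_1,\dots,X_i$), the martingale-difference hypothesis yields $\mathbb{E}[X_i\mid\mathcal{F}_{i-1}]=p_i$, so that $\tfrac1n\sum_i\mathbb{E}[X_i]=p$. Since $\{\sum_iY_i\ge nt\}=\{\sum_iX_i\ge n(p+t)\}$ and $np<n(p+t)<n$ by the assumption $0<t<1-p$, Theorem~\ref{depHoeff}, with its parameter set to $m:=n(p+t)=nt+np$, gives
\[ \mathbb{P}\Big[\sum_{i=1}^{n}Y_i\ge nt\Big]\le\inf_{f\in\mathcal{F}}\frac{1}{f(m)}\,\mathbb{E}\big[f(Z)\big], \]
where $Z$ takes value $j$ with probability $\sum_{A\in\partial_j[n]}\mathbb{E}[Z_A]$ and $Z_A=\prod_{i\in A}X_i\prod_{i\notin A}(1-X_i)$.

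Next I would identify the law of $Z$. Fixing $A\subseteq[n]$ and conditioning successively on $\mathcal{F}_{n-1},\mathcal{F}_{n-2},\dots,\mathcal{F}_0$, at each stage the partial product over the still-unprocessed indices is measurable with respect to the conditioning $\sigma$-algebra while $\mathbb{E}[X_k\mid\mathcal{F}_{k-1}]=p_k$ and $\mathbb{E}[1-X_k\mid\mathcal{F}_{k-1}]=1-p_k$; the expectation therefore telescopes to $\mathbb{E}[Z_A]=\prod_{i\in A}p_i\prod_{i\notin A}(1-p_i)$. Hence $Z$ is distributed as a sum of independent Bernoulli variables with parameters $p_1,\dots,p_n$. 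By the classical fact that a Poisson--binomial law is dominated in the convex order by the binomial law with the same mean (averaging a pair of parameters $p_i,p_j$ to their common value replaces the corresponding pair-sum by a mean-preserving spread of it, which cannot decrease $\mathbb{E}[f(\cdot)]$ for convex $f$, and $(p_1,\dots,p_n)$ majorises $(p,\dots,p)$), we get $\mathbb{E}[f(Z)]\le\mathbb{E}[f(B_{n,p})]$ for every $f\in\mathcal{F}$. Combined with the previous display this is the first inequality of the theorem.

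It remains to bound $\inf_{f\in\mathcal{F}}\mathbb{E}[f(B_{n,p})]/f(m)$ by the stated quantity, under the extra hypotheses that $m$ is a positive integer and $t>\tfrac{p(1-p)(e-1)}{1-p+ep}$. Using $\mathbb{E}[e^{sB_{n,p}}]=(1-p+pe^s)^n$, the defining equation for $h$ becomes $\tfrac{npe^h}{1-p+pe^h}=m$, i.e.\ $e^h=\tfrac{(p+t)(1-p)}{p(1-p-t)}$, and a short rearrangement shows that the hypothesis on $t$ is precisely $h\ge1$. I would then test the inequality above against
\[ f(x)=\begin{cases} 0,& x\le m-1,\\ x-m+1,& m-1\le x\le m,\\ 1+h(x-m),& m\le x\le m+1,\\ (1+h)\,e^{h(x-m-1)},& x\ge m+1, \end{cases} \]
which is continuous, nonnegative, with successive slopes $0,1,h,h(1+h)e^{h(x-m-1)}$ — nondecreasing because $h\ge1$, so $f\in\mathcal{F}$ — and satisfies $f(m)=1$. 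Since $f(j)=0$ for integer $j<m$ and $f(j)=(1+h)e^{h(j-m-1)}$ for integer $j>m$,
\[ \frac{\mathbb{E}[f(B_{n,p})]}{f(m)}=\mathbb{P}[B_{n,p}=m]+(1+h)e^{-h}\sum_{j>m}e^{h(j-m)}\mathbb{P}[B_{n,p}=j], \]
and replacing $\sum_{j>m}e^{h(j-m)}\mathbb{P}[B_{n,p}=j]$ by $H_m(n,p,t)-T(n,p,t)-\mathbb{P}[B_{n,p}=m]$ — valid since $H_m(n,p,t)=e^{-hm}\mathbb{E}[e^{hB_{n,p}}]$ and $T(n,p,t)=\sum_{j\le m-1}e^{h(j-m)}\mathbb{P}[B_{n,p}=j]$ — turns the right-hand side into exactly the bound displayed in the theorem. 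That bound equals $\mathbb{P}[B_{n,p}=m]+(1+h)\sum_{j>m}e^{h(j-m-1)}\mathbb{P}[B_{n,p}=j]$, which is strictly smaller than $H_m(n,p,t)=\sum_je^{h(j-m)}\mathbb{P}[B_{n,p}=j]$ because $1+h<e^h$ for $h>0$ and the terms with $j<m$ have been discarded; this is the claimed improvement over Theorem~\ref{McDiar}.

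The step I expect to be the real obstacle is the construction of $f$: one has to guess the near-optimal member of $\mathcal{F}$ and then observe that its convexity is exactly what forces the hypothesis $t>\tfrac{p(1-p)(e-1)}{1-p+ep}$ (equivalently $h\ge1$). Everything else — the telescoping identity for $\mathbb{E}[Z_A]$, the convex-order comparison with $B_{n,p}$, and the algebra matching $\mathbb{E}[f(B_{n,p})]/f(m)$ to the displayed formula — is routine.
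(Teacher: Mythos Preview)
Your argument is correct. The first statement follows the paper's route essentially verbatim: apply Theorem~\ref{depHoeff} to $X_i=Y_i+p_i$, compute $\mathbb{E}[Z_A]$ by successive conditioning (the paper states this as Lemma~\ref{productmartingale}, phrased as an inequality though the proof there in fact gives equality, as you observe), and then pass to $B_{n,p}$ via Hoeffding's convex-order comparison (the paper cites this as Theorem~\ref{successes}).

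For the second statement you take a genuinely different route. The paper tests against the two-piece linear function $g_h(x)=\max\{0,h(x-\ell)+1\}$; this gives $\mathbb{E}[g_h(B_{n,p})]=\mathbb{P}[B_{n,p}=\ell]+\sum_{j\ge\ell+1}(h(j-\ell)+1)\mathbb{P}[B_{n,p}=j]$, which is in fact \emph{smaller} than the displayed bound, and the paper then relaxes it to the displayed form via the inequality $e^{hx}-(hx+1)\ge e^{hx}\bigl(1-(1+h)e^{-h}\bigr)$ for integer $x\ge1$. Your four-piece function, by contrast, is engineered so that $\mathbb{E}[f(B_{n,p})]/f(m)$ equals the displayed bound exactly, with no further estimation. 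The trade-off is that the paper's $g_h$ is convex for every $h>0$ and the hypothesis $h>1$ enters only to ensure that no integer lies in $(\ell-1/h,\ell)$, whereas your $f$ needs $h\ge1$ for the slope sequence $0,1,h,h(1+h)$ to be nondecreasing --- so the same numerical threshold on $t$ appears, but for a structurally different reason. Both arguments are valid; yours is more direct at the cost of a more elaborate test function, while the paper's is simpler to write down but hides a sharper intermediate bound behind one extra inequality.
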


Let us prove the last statement of the previous result. To this end, notice that 
the bound of the second statement is
\[ \leq \frac{1+h}{e^{h}} H_m(n,p,t) + \left(1- \frac{1+h}{e^h}\right) \mathbb{P}\left[B_{n,p} =n(p+t)\right] := Q .\]
Now Hoeffding's Theorem \ref{folklore} implies that 
\[ \mathbb{P}\left[B_{n,p} =n(p+t)\right] \leq \mathbb{P}\left[B_{n,p} \geq n(p+t)\right] \leq H_m(n,p,t). \]
Since $Q$ is a convex combination of $\mathbb{P}\left[B_{n,p} =n(p+t)\right]$ and 
$H_m(n,p,t)$, it follows that the bound of the previous result is less than the bound of Theorem \ref{McDiar}. 
The proofs of the remaining statements of Theorem \ref{genmcdm} can be found in  
Section \ref{martsection}.
Our approach uses Theorem \ref{depHoeff} combined with extensions of ideas 
that we employed in previous work (see \cite{PeRaWa}).  
In the next subsection we apply Theorem \ref{depHoeff} to another class of weakly dependent random 
variables. 

\subsection{$k$-wise independence}

In this section we employ Theorem \ref{depHoeff} in order to 
obtain a concentration inequality for a particular class of weakly dependent random variables. 
We begin by first defining this notion of weak dependence. 
The random variables $X_1,\ldots,X_n$ will be called $k$-\emph{wise independent} if for any 
subset of $k$ indices $A = \{i_1,\ldots,i_k\}$ and all outcomes $x_{i_1},\ldots,x_{i_k}$ we have 
\[ \mathbb{P}\left[X_{i_1}\leq x_{i_1}\cap \cdots \cap X_{i_k}\leq x_{i_k} \right] = \prod_{i_j \in A} \mathbb{P}\left[X_{i_j}\leq x_{i_j}\right] . \]
$K$-wise independent random variables play a key role in theoretical computer science where 
they are used for de-randomizing algorithms (see \cite{Alon}). 
Note that $2$-independent random 
variables are just pairwise independent random variables. 
Let us also mention two examples of $(n-1)$-wise random variables. Let $G$ a graph on $n$ vertices.
Suppose that each edge of $G$ is given a random orientation with probability $1/2$ for each direction, 
independently of all other edges. For every $v\in G$, let $\delta_v = \text{deg}^{-}(v)\mod 2$, 
where $\text{deg}^{-}(v)$ is the in-degree of vertex $v$. Then (see \cite{PelSch}, Theorem $4$) the random variables $\delta_v, v\in V$ are $(n-1)$-wise independent. 
Similarly, let $G$ be a random graph from $\mathcal{G}(n,1/2)$ and for 
every vertex $v\in G$, set $d_v  = \text{deg}(v)\mod 2$. Then (see \cite{PelekisBern}, Corollary $4.2$) the random variables $\delta_v, v\in V$ are $(n-1)$-wise independent.
For more sophisticated examples on $k$-wise independent random variables 
we refer the reader to Alon et al. \cite{Alon} and 
Benjamini et al. \cite{Benjamini}. \\ 

We shall be interested in concentration inequalities for sums of $k$-wise independent random variables. 
The problem of obtaining analogues of Hoeffding's Theorem 
\ref{folklore} for sums of $k$-wise independent random variables has attracted the attention 
of several authors. See for example the works of 
Bellare et al. \cite{Bellare} and Schmidt et al. \cite{Schmidt} and references therein. 
Among the several existing concentration inequalities 
the following one is obtained via an approach that is similar to the approach of this paper. \\

\begin{thm}[Schmidt, Siegel, Srinivasan, $1995$]
\label{SSS} Let $X_1,\ldots,X_n$ be random variables such that $0\leq X_i\leq 1$ and $\mathbb{E}\left[X_i\right]= p_i$, for each $i=1,\ldots,n$. Set $p=\frac{1}{n}\sum_ip_i$.
Fix $\varepsilon >0$ and 
set $k_{\ast} := \lceil\frac{np\varepsilon}{1-p} \rceil$. If $k\geq k_{\ast}$ and 
$X_1,\ldots,X_n$ are $k$-wise independent then 
\[ \mathbb{P}\left[\sum_{i=1}^{n}X_i\geq np(1+\varepsilon)\right] \leq \binom{n}{k_{\ast}} p^{k_{\ast}} / \binom{np(1+\varepsilon)}{k_{\ast}} . \]
\end{thm}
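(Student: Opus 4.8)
The plan is to run Theorem \ref{depHoeff} and estimate the quantity $\mathbb{E}[f(Z)]$ for a cleverly chosen $f\in\mathcal{F}$, exploiting $k$-wise independence to pin down enough of the distribution of $Z$. Recall that $Z$ takes the value $j$ with probability $\sum_{A\in\partial_j[n]}\mathbb{E}[Z_A]$, where $Z_A=\prod_{i\in A}X_i\prod_{i\notin A}(1-X_i)$. The natural candidate for the test function is $f(x)=\binom{x}{k_\ast}$ extended suitably to the reals (for integer arguments this is $\frac{x(x-1)\cdots(x-k_\ast+1)}{k_\ast!}$, which is nonnegative, increasing and convex on $[k_\ast-1,\infty)$; one truncates it to be constant $0$ to the left to land in $\mathcal{F}$ — the harmless technicality of making a binomial coefficient a legitimate member of $\mathcal{F}$ will need a sentence). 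With $t=np(1+\varepsilon)$ and $np<t<n$ (the case $t\ge n$ being vacuous and $t\le np$ being handled by Markov/monotonicity of $\varepsilon$), Theorem \ref{depHoeff} gives
\[
\mathbb{P}\!\left[\sum_{i=1}^n X_i\ge np(1+\varepsilon)\right]\le \frac{\mathbb{E}\!\left[\binom{Z}{k_\ast}\right]}{\binom{np(1+\varepsilon)}{k_\ast}}.
\]

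The second step is to identify $\mathbb{E}\big[\binom{Z}{k_\ast}\big]$ with the $k_\ast$-th binomial moment, i.e.\ with $\sum_{A:|A|=k_\ast}\mathbb{E}\big[\prod_{i\in A}X_i\big]$. This is the combinatorial heart: $\binom{Z}{k_\ast}$ counts $k_\ast$-subsets of a random $|A|$-set distributed as $Z$, and unwinding the definition of $Z$ one checks
\[
\mathbb{E}\!\left[\binom{Z}{k_\ast}\right]=\sum_{j}\binom{j}{k_\ast}\!\!\sum_{A\in\partial_j[n]}\!\!\mathbb{E}[Z_A]
=\sum_{S\in\partial_{k_\ast}[n]}\ \sum_{A\supseteq S}\mathbb{E}[Z_A]
=\sum_{S\in\partial_{k_\ast}[n]}\mathbb{E}\!\left[\prod_{i\in S}X_i\right],
\]
since for fixed $S$ with $|S|=k_\ast$ we have $\sum_{A\supseteq S}Z_A=\prod_{i\in S}X_i$ (summing $\prod_{i\in A\setminus S}X_i\prod_{i\notin A}(1-X_i)$ over all $A\setminus S\subseteq [n]\setminus S$ telescopes to $1$ on those coordinates). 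This identity is exactly the algebra underlying the "$\sum_A\mathbb{E}[Z_A]=1$" remark, applied one level down.

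The third step is where $k$-wise independence enters, and it is the only place it is used: since $k\ge k_\ast$, every $k_\ast$-subset $S$ consists of mutually independent variables, so $\mathbb{E}\big[\prod_{i\in S}X_i\big]=\prod_{i\in S}p_i$. Hence $\mathbb{E}\big[\binom{Z}{k_\ast}\big]=e_{k_\ast}(p_1,\dots,p_n)$, the elementary symmetric polynomial of degree $k_\ast$ in the $p_i$'s. Maclaurin's inequality (or AM–GM on the symmetric functions) then bounds this by its value at the "averaged" point: $e_{k_\ast}(p_1,\dots,p_n)\le\binom{n}{k_\ast}\big(\tfrac1n\sum_i p_i\big)^{k_\ast}=\binom{n}{k_\ast}p^{k_\ast}$. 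Plugging into the display from step one yields exactly
\[
\mathbb{P}\!\left[\sum_{i=1}^n X_i\ge np(1+\varepsilon)\right]\le \binom{n}{k_\ast}p^{k_\ast}\big/\binom{np(1+\varepsilon)}{k_\ast},
\]
which is the claimed bound. I expect the main obstacle to be purely bookkeeping rather than conceptual: making $\binom{\cdot}{k_\ast}$ a bona fide element of $\mathcal{F}$ (increasing and convex on all of $\mathbb{R}$ after left-truncation, and verifying that $Z\ge k_\ast$ contributes correctly while $Z<k_\ast$ contributes $0$ to both sides), and confirming $np(1+\varepsilon)\ge k_\ast$ so that $\binom{np(1+\varepsilon)}{k_\ast}>0$ and the bound is non-vacuous — but the latter is immediate from $k_\ast=\lceil np\varepsilon/(1-p)\rceil$ together with $p<1$, since $np(1+\varepsilon)\ge np\varepsilon\ge np\varepsilon/(1-p)$. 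A secondary subtlety is the reduction to the regime $np<t<n$ demanded by Theorem \ref{depHoeff}: if $np(1+\varepsilon)\ge n$ the probability is $0$, and if $\varepsilon$ is so small that $k_\ast=0$ there is nothing to prove, so one may assume $k_\ast\ge1$ and $t<n$ throughout.
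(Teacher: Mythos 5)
Your proposal is correct in its core, but note a point of comparison first: the paper does not prove Theorem \ref{SSS} at all --- it cites \cite{Schmidt} for the proof and only derives, as an immediate corollary of Theorem \ref{Linial}, the special case of Bernoulli $0/1$ variables with $np(1+\varepsilon)$ an integer. What you give is a self-contained proof of the full $[0,1]$-valued statement assembled from the paper's own machinery: you feed the left-truncated falling factorial $x\mapsto x(x-1)\cdots(x-k_\ast+1)/k_\ast!$ into Theorem \ref{depHoeff}, identify $\mathbb{E}\big[\binom{Z}{k_\ast}\big]$ with the binomial moment $\sum_{|S|=k_\ast}\mathbb{E}\big[\prod_{i\in S}X_i\big]$ via the pointwise identity $\sum_{A\supseteq S}Z_A=\prod_{i\in S}X_i$ (which is Lemma \ref{mainlem} applied to the coordinates outside $S$, and, as you correctly exploit, needs no Bernoulli assumption, unlike the phrasing in the paper's proof of Theorem \ref{Linial}), then use $k_\ast$-wise independence to factor each term and Maclaurin's inequality to get $e_{k_\ast}(p_1,\ldots,p_n)\le\binom{n}{k_\ast}p^{k_\ast}$. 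This is the Schmidt--Siegel--Srinivasan symmetric-function argument recast through Theorem \ref{depHoeff}; relative to the route the paper does take (Theorem \ref{Linial}), it buys general $[0,1]$-valued variables, unequal means, and non-integer thresholds.

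Two pieces of your boundary bookkeeping need repair, though neither threatens the main argument. First, the chain $np(1+\varepsilon)\ge np\varepsilon\ge np\varepsilon/(1-p)$ is backwards: since $1-p<1$ we have $np\varepsilon\le np\varepsilon/(1-p)$. The correct observation is that $np\varepsilon/(1-p)\le np(1+\varepsilon)$ holds if and only if $p(1+\varepsilon)\le 1$, i.e.\ exactly when $t:=np(1+\varepsilon)\le n$; in that regime $k_\ast=\lceil np\varepsilon/(1-p)\rceil\le\lceil t\rceil<t+1$, so $t>k_\ast-1$ and hence $f(t)=\binom{t}{k_\ast}>0$, which is what you actually need. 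Second, the claim that the probability vanishes when $np(1+\varepsilon)\ge n$ fails at $t=n$ exactly (independent Bernoulli variables give $\mathbb{P}\left[\sum_i X_i=n\right]>0$); but there $p(1+\varepsilon)=1$ forces $k_\ast=n$, so $k$-wise independence is mutual independence and
\[
\mathbb{P}\left[\sum_{i=1}^{n}X_i\ge n\right]=\prod_{i=1}^{n}\mathbb{P}\left[X_i=1\right]\le\prod_{i=1}^{n}p_i\le p^n=\binom{n}{k_\ast}p^{k_\ast}\Big/\binom{n}{k_\ast},
\]
so the asserted bound holds there by a one-line separate argument. With these repairs, plus the sentence you already anticipate checking that the truncated polynomial is nonnegative, increasing and convex (true, since all its roots lie in $[0,k_\ast-1]$ and the glued function has nonnegative right derivative at the junction), the proof is complete.
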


See \cite{Schmidt} for a proof of this result, a basic ingredient of which is the use 
of elementary symmetric functions defined as 
$S_j(X_1,\ldots,X_n)= \sum_{A\in  \partial_j[n]} \prod_{i\in A} X_i$, where $j<n$.  
Clearly, the expectation of the function $S_k(X_1,\ldots,X_n)$ is related to the definition of the
random variable $Z$ in Theorem \ref{depHoeff}. In particular, the later result 
yields the following analogue of 
Hoeffding's Theorem \ref{folklore} for sums of $k$-wise independent random variables. \\

\begin{thm}\label{kwiseindep} Let $X_1,\ldots,X_n$ be $k$-wise 
independent random variables such that $0\leq X_i\leq 1$ and $\mathbb{E}\left[X_i\right]= p$, for each $i=1,\ldots,n$. Fix $\varepsilon >0$. Then
\[ \mathbb{P}\left[\sum_{i=1}^{n}X_i\geq np(1+\varepsilon)\right] \leq \frac{1}{(p-p^2)^{n-k}} e^{-nD(p(1+\varepsilon)||p)} . \]
\end{thm}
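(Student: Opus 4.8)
The plan is to feed Theorem~\ref{depHoeff} the one-parameter family of test functions $f(x)=e^{hx}$, $h>0$ (each of which is nonnegative, increasing and convex, hence lies in $\mathcal{F}$), and then to bound $\mathbb{E}\bigl[f(Z)\bigr]=\mathbb{E}\bigl[e^{hZ}\bigr]$ by exploiting $k$-wise independence. The starting point is the algebraic identity
\[
\mathbb{E}\bigl[e^{hZ}\bigr]
=\sum_{j=0}^{n}e^{hj}\sum_{A\in\partial_j[n]}\mathbb{E}[Z_A]
=\mathbb{E}\Bigl[\,\sum_{A\subseteq[n]}e^{h|A|}Z_A\Bigr]
=\mathbb{E}\Bigl[\prod_{i=1}^{n}\bigl(1+(e^{h}-1)X_i\bigr)\Bigr]
=\sum_{A\subseteq[n]}(e^{h}-1)^{|A|}\,\mathbb{E}\Bigl[\prod_{i\in A}X_i\Bigr],
\]
where I use $\sum_{A}e^{h|A|}Z_A=\prod_{i}\bigl(e^{h}X_i+(1-X_i)\bigr)$. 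The coefficients $(e^{h}-1)^{|A|}$ are strictly positive because $h>0$; this is what makes the comparison below work term by term.

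The second step is to estimate each $\mathbb{E}\bigl[\prod_{i\in A}X_i\bigr]$. If $|A|\le k$, then $k$-wise independence gives $\mathbb{E}\bigl[\prod_{i\in A}X_i\bigr]=p^{|A|}$ exactly. If $|A|=j>k$, I would fix an arbitrary $k$-element subset $A_0\subseteq A$ and use $0\le X_i\le 1$ to get $\prod_{i\in A}X_i\le\prod_{i\in A_0}X_i$, hence $\mathbb{E}\bigl[\prod_{i\in A}X_i\bigr]\le p^{k}$, again by $k$-wise independence. Writing $S_j=\sum_{A\in\partial_j[n]}\prod_{i\in A}X_i$, this yields $\mathbb{E}[S_j]=\binom{n}{j}p^{j}$ for $j\le k$ and $\mathbb{E}[S_j]\le\binom{n}{j}p^{k}$ for $j>k$, so that
\[
\mathbb{E}\bigl[e^{hZ}\bigr]\le\sum_{j=0}^{k}\binom{n}{j}\bigl((e^{h}-1)p\bigr)^{j}+p^{k}\sum_{j=k+1}^{n}\binom{n}{j}(e^{h}-1)^{j}.
\]

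The third step — the one small idea in the argument — is to compare this, term by term in $j$, with the "independent target" $(p-p^{2})^{-(n-k)}\bigl(1-p+pe^{h}\bigr)^{n}=(p-p^{2})^{-(n-k)}\sum_{j=0}^{n}\binom{n}{j}\bigl((e^{h}-1)p\bigr)^{j}$. For $j\le k$ the $j$-th term inequality is immediate since $(p-p^{2})^{n-k}\le 1$. For $j>k$ it reduces to $p^{k}(p-p^{2})^{n-k}\le p^{j}$, i.e.\ $p^{n}(1-p)^{n-k}\le p^{j}$, which holds because $p^{j}\ge p^{n}$ (as $j\le n$) and $(1-p)^{n-k}\le 1$. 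Hence $\mathbb{E}\bigl[e^{hZ}\bigr]\le(p-p^{2})^{-(n-k)}\bigl(1-p+pe^{h}\bigr)^{n}$ for every $h>0$.

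Finally, applying Theorem~\ref{depHoeff} with $t=np(1+\varepsilon)$ — legitimate since $np<t<n$, the right inequality being forced by $p(1+\varepsilon)<1$, which is implicit in the statement since $D\!\left(p(1+\varepsilon)\,\|\,p\right)$ must be defined — and then restricting the infimum over $\mathcal{F}$ to the exponential functions, we obtain
\[
\mathbb{P}\Bigl[\sum_{i=1}^{n}X_i\ge np(1+\varepsilon)\Bigr]
\le\inf_{h>0}e^{-ht}\,\mathbb{E}\bigl[e^{hZ}\bigr]
\le\frac{1}{(p-p^{2})^{n-k}}\,\inf_{h>0}e^{-ht}\bigl(1-p+pe^{h}\bigr)^{n},
\]
and the remaining infimum equals $H(n,p,t)=e^{-nD(p(1+\varepsilon)\,\|\,p)}$ by the explicit evaluation in Hoeffding's Theorem~\ref{folklore}. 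I expect no genuine obstacle beyond guessing the correct target $(p-p^{2})^{-(n-k)}\bigl(1-p+pe^{h}\bigr)^{n}$ and checking the term-by-term comparison in the third step; everything else is the standard Chernoff--Hoeffding exponential-moment method routed through Theorem~\ref{depHoeff}.
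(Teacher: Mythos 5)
Your proof is correct and follows essentially the same route as the paper: exponential test functions fed into Theorem~\ref{depHoeff}, the split of subsets into $|A|\le k$ (where $k$-wise independence factorizes the expectation) versus $|A|>k$ (where one restricts to a $k$-subset using $0\le X_i\le 1$), and a comparison with $(1-p+pe^{h})^{n}$ at the cost of the factor $(p(1-p))^{-(n-k)}$, finished by Hoeffding's minimization. The only cosmetic difference is that you expand $\mathbb{E}\bigl[e^{hZ}\bigr]=\sum_{A}(e^{h}-1)^{|A|}\mathbb{E}\bigl[\prod_{i\in A}X_i\bigr]$ and compare term by term, whereas the paper bounds $\mathbb{E}[Z_A]$ directly and regroups; both are valid and yield the same bound.
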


Notice that the previous result reduces to Hoeffding's Theorem \ref{folklore}, when $k=n$, i.e. the 
random variables are mutually independent. Notice also that Theorem \ref{kwiseindep} is usefull for 
values of $p$ that are close to $\frac{1}{2}$ and rather large values of $k$. 
As a direct application of Theorem \ref{Linial} one obtains the following, 
special case, of  Theorem \ref{SSS}. \\

\begin{thm} Fix $p\in (0,1)$ and 
let $X_1,\ldots,X_n$ be $k$-wise independent
Bernoulli $0/1$ random variables such that $\mathbb{E}\left[X_i\right]= p$, for each $i=1,\ldots,n$. 
Let $\varepsilon >0$ be such that $np(1+\varepsilon)$ is a positive integer that satisfies 
$np(1+\varepsilon) > k$. Then  
\[ \mathbb{P}\left[\sum_{i=1}^{n}X_i\geq np(1+\varepsilon)\right] \leq \binom{n}{k} p^{k} / \binom{np(1+\varepsilon)}{k} . \]
\end{thm}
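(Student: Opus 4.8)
The plan is to derive this as a direct corollary of Theorem \ref{Linial} applied to the $k$-wise independent Bernoulli random variables $X_1,\ldots,X_n$. Set $\beta := p(1+\varepsilon)$, so that by hypothesis $\beta n = np(1+\varepsilon)$ is a positive integer with $0 < k < \beta n$, and $\beta \in (0,1)$ (the right tail is non-trivial precisely when $\beta < 1$, which is implicit). Theorem \ref{Linial} then gives
\[ \mathbb{P}\left[\sum_{i=1}^{n}X_i\geq \beta n\right] \leq \frac{1}{\binom{\beta n}{k}} \sum_{A: |A|=k} \mathbb{E}\left[\prod_{i\in A}X_i\right]. \]
The whole content of the proof is therefore to evaluate the numerator $\sum_{A:|A|=k}\mathbb{E}\left[\prod_{i\in A}X_i\right]$ using $k$-wise independence.

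**Next I would** observe that for any fixed $A$ with $|A|=k$, since the $X_i$ are $k$-wise independent and Bernoulli with $\mathbb{E}[X_i]=p$, we have $\mathbb{E}\left[\prod_{i\in A}X_i\right] = \prod_{i\in A}\mathbb{E}[X_i] = p^{k}$. (For Bernoulli $0/1$ variables, $k$-wise independence of the distribution functions is equivalent to the factorization of the joint probability $\mathbb{P}[X_i = 1 \text{ for all } i \in A]$, which is exactly $\mathbb{E}\left[\prod_{i\in A}X_i\right]$; alternatively one expands $\prod_{i\in A}X_i$ as the product and uses that the event $\{\prod_{i\in A}X_i = 1\}$ equals $\bigcap_{i\in A}\{X_i=1\}$ whose probability factors.) There are $\binom{n}{k}$ subsets $A$ of size $k$, so
\[ \sum_{A: |A|=k} \mathbb{E}\left[\prod_{i\in A}X_i\right] = \binom{n}{k} p^{k}. \]
Substituting this into the bound from Theorem \ref{Linial} yields exactly
\[ \mathbb{P}\left[\sum_{i=1}^{n}X_i\geq np(1+\varepsilon)\right] \leq \binom{n}{k} p^{k} / \binom{np(1+\varepsilon)}{k}, \]
which is the claimed inequality.

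**The only point requiring any care** — and the closest thing to an obstacle, though it is minor — is confirming that the $k$-wise independence hypothesis as stated (in terms of cumulative distribution functions $\mathbb{P}[X_{i_1}\le x_{i_1}\cap\cdots]$) does indeed give the factorization of $\mathbb{E}\left[\prod_{i\in A}X_i\right]$ for Bernoulli variables. This is immediate: for $0/1$-valued variables the joint law on any $k$ coordinates is determined by the values of the joint c.d.f., so $k$-wise independence of the c.d.f.'s forces genuine independence of any $k$ of the $X_i$, whence $\mathbb{E}\left[\prod_{i\in A}X_i\right]=\prod_{i\in A}\mathbb{E}[X_i]=p^{k}$. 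One should also note the hypotheses of Theorem \ref{Linial} are met: $np(1+\varepsilon)$ is a positive integer by assumption, and $0<k<np(1+\varepsilon)$ is exactly the condition $np(1+\varepsilon)>k$ together with $k\ge 1$. No further estimates (no entropy bounds, no optimization) are needed — this theorem is strictly a specialization of Theorem \ref{SSS} obtained by reading off Theorem \ref{Linial} in the $k$-wise independent Bernoulli case.
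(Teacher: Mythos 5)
Your proposal is correct and is exactly the argument the paper intends: the paper omits the proof as ``immediate'', meaning precisely the specialization of Theorem \ref{Linial} with $\beta n = np(1+\varepsilon)$ together with the observation that $k$-wise independence gives $\mathbb{E}\left[\prod_{i\in A}X_i\right]=p^k$ for every $A$ with $|A|=k$, so the sum equals $\binom{n}{k}p^k$. Your side remark that one needs $np(1+\varepsilon)<n$ (so that $\beta\in(0,1)$ as required by Theorem \ref{Linial}) is a fair, minor point of hygiene, but otherwise there is nothing to add.
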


The proof of this result is immediate and so is omitted. 
In the next subsection we shall be concerned with a particular 
dependency structure between Bernoulli 
$0/1$ random variables. 

\subsection{Dependency graphs}\label{lolole}

In this section we discuss yet another application of Theorem \ref{depHoeff}. 
We shall be concerned with sums of dependent Bernoulli random variables whose dependency 
structure is given in terms of a finite graph. 
Such a graph is referred to as a \emph{dependency graph} and 
is defined in the following Theorem.  Dependency graphs are used 
in probabilistic combinatorics in order to prove existence of "structures" with certain desired properties; 
a celebrated  tool for proving such existence is  the so-called Lov\'asz Local Lemma (see \cite{Erdos}).  
Below we obtain a concentration bound regarding sums of Bernoulli random variables 
whose dependency structure is given in terms of a finite graph. 
Recall that the \emph{independence number} of a finite graph is the cardinality  of the largest 
set of vertices no two of which are adjacent.  \\

\begin{thm}\label{dependencygraph} Let $G=(V,E)$ be a finite graph with vertices $v_1,\ldots,v_n$ and 
let $\alpha$ be its independence number. 
To each $v_i,i=1,\ldots,n$ 
we associate a Bernoulli $0/1$ random variable, $B_i$, such that $\mathbb{P}[B_i =1]=\frac{1}{2}$. 
Suppose that each random variable $B_i, i=1,\ldots,n$ is independent of the set $\{B_j : (v_i,v_j)\notin E\}$.
If $t$ is a real number such that $\frac{n}{2}<t<n$, then 
\[ \mathbb{P}\left[\sum_{i=1}^{n} B_i \geq t \right] \leq 2^{n-\alpha} H(n,1/2,t) , \]
where  $H(n,1/2,t)$ is the Hoeffding function, defined 
in Theorem \ref{folklore}. 
\end{thm}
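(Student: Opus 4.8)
The plan is to apply Theorem \ref{depHoeff} to the random variables $B_1,\dots,B_n$ with $p=\frac12$, so that it suffices to bound $\mathbb{E}[f(Z)]$ for a cleverly chosen increasing convex $f$, where $Z$ is the integer-valued random variable with $\mathbb{P}[Z=j]=\sum_{A\in\partial_j[n]}\mathbb{E}[Z_A]$ and $Z_A=\prod_{i\in A}B_i\prod_{i\notin A}(1-B_i)$. The natural choice, mimicking the proof of Hoeffding's theorem from Theorem \ref{depHoeff}, is the exponential $f(x)=e^{hx}$ with $h>0$ to be optimized at the end; then $\frac{1}{f(t)}\mathbb{E}[f(Z)]=e^{-ht}\sum_{j=0}^n e^{hj}\,\mathbb{P}[Z=j]=e^{-ht}\sum_{A\subseteq[n]}e^{h|A|}\,\mathbb{E}[Z_A]$.

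First I would show the key inequality
\[
\sum_{A\subseteq[n]} e^{h|A|}\,\mathbb{E}[Z_A]\;\le\;2^{\,n-\alpha}\,\bigl(\tfrac12+\tfrac12 e^{h}\bigr)^{n}.
\]
To see this, note $\sum_A e^{h|A|}\mathbb{E}[Z_A]=\mathbb{E}\bigl[\prod_{i=1}^n\bigl((1-B_i)+e^{h}B_i\bigr)\bigr]$ since the $B_i$ are $0/1$; writing $W_i:=(1-B_i)+e^{h}B_i\in\{1,e^h\}$, we must bound $\mathbb{E}[\prod_i W_i]$. Fix a maximum independent set $S$ with $|S|=\alpha$; the variables $\{W_i:i\in S\}$ are mutually independent (each $B_i$ for $i\in S$ is independent of all $B_j$ with $j\neq i$ in $S$, since $S$ is independent in $G$), each with mean $\frac12(1+e^h)$. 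For the remaining $n-\alpha$ indices $i\notin S$ we use the crude bound $W_i\le e^{h}$ deterministically. Since all $W_i\ge 1>0$, we get $\prod_{i=1}^n W_i\le e^{h(n-\alpha)}\prod_{i\in S}W_i$, and taking expectations and using independence over $S$ yields $\mathbb{E}[\prod_i W_i]\le e^{h(n-\alpha)}\bigl(\tfrac12+\tfrac12 e^h\bigr)^{\alpha}$. A short check shows $e^{h(n-\alpha)}\bigl(\tfrac12+\tfrac12 e^h\bigr)^{\alpha}\le 2^{n-\alpha}\bigl(\tfrac12+\tfrac12 e^h\bigr)^{n}$, because $e^h\le 1+e^h=2\cdot\bigl(\tfrac12+\tfrac12 e^h\bigr)$, so each of the $n-\alpha$ "bad" factors $e^h$ is at most $2$ times a "good" factor $\frac12+\frac12 e^h$.

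Combining, $\mathbb{P}[\sum_i B_i\ge t]\le 2^{n-\alpha}\cdot e^{-ht}\bigl(\tfrac12+\tfrac12 e^h\bigr)^{n}$ for every $h>0$, hence also for the infimum over $h>0$. By the second part of Hoeffding's Theorem \ref{folklore} applied with $p=\tfrac12$ (valid precisely when $\tfrac n2<t<n$, which is our hypothesis), $\inf_{h>0} e^{-ht}\bigl(\tfrac12+\tfrac12 e^h\bigr)^{n}=H(n,1/2,t)$, and the claimed bound $2^{n-\alpha}H(n,1/2,t)$ follows. The only mild obstacle is the bookkeeping that justifies replacing the $n-\alpha$ off-$S$ factors by the constant $2$ uniformly; this is exactly where the independence number (rather than, say, a fractional relaxation) enters, and it is elementary once one observes $W_i\ge1$ so that dropping to a sub-product over $S$ only increases the expression. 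One should also remember to record $\sum_A\mathbb{E}[Z_A]=1$, which is part of the statement of Theorem \ref{depHoeff} and needs no separate argument here.
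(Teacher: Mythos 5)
Your proof is correct and takes essentially the same route as the paper: apply Theorem \ref{depHoeff} with $f(x)=e^{hx}$, use mutual independence of the $B_i$ over a maximum independent set, bound the contribution of the remaining $n-\alpha$ vertices crudely, and optimize over $h>0$ to arrive at $2^{n-\alpha}H(n,1/2,t)$ (the paper bounds each $\mathbb{E}[Z_A]\leq 2^{-\alpha}$ term by term rather than bounding the off-set factors $W_i$ by $e^{h}$, but this is only bookkeeping, and your intermediate bound $e^{h(n-\alpha)}\left(\tfrac12+\tfrac12 e^{h}\right)^{\alpha}$ is in fact slightly sharper before you relax it). One small remark: your closing comment that ``dropping to a sub-product over $S$ only increases the expression'' is backwards, since with $W_i\geq 1$ dropping factors can only decrease the product; however, your actual argument correctly replaces the off-$S$ factors by their upper bound $e^{h}$, so the proof is unaffected.
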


Notice that the previous result reduces to Hoeffding's in case $\alpha =n$, i.e., the random variables are 
independent. Notice also the the result is useful for rather large values of $\alpha$.  
In the following section we discuss an improvement upon a concentration inequality for $U$-statistics. 

\subsection{$U$-statistics}\label{statu}

In this subsection we discuss an analogue of Hoeffding's Theorem \ref{folklore} 
for a particular class of weakly 
dependent random variables.  
Before being more precise, let us fix some notation. We will denote by 
$[n]^{d}_{<}$ the set consisting of all ordered $d$-tuples from the set $[n]$; formally, 
\[ [n]^{d}_{<} = \{(i_1,\ldots,i_d)\in [n]^d : 1\leq i_1 < i_2 < \cdots < i_d \leq n\} . \]
There are several instances in which one 
encounters sums of random variables of 
the form 
\[ \Xi:= \sum_{(i_1,\ldots,i_d)\in [n]^{d}_{<}} F(\xi_{i_1}, \xi_{i_2},\ldots,\xi_{i_d}) , \]
where $\xi_1,\ldots, \xi_n$ are \emph{independent and identically distributed} random variables and 
$F:\mathbb{R}^d\rightarrow [0,1]$ is a bounded function that depends only 
on the random vector $(\xi_{i_1},\xi_{i_2}\ldots,\xi_{i_d})$.  
Clearly, provided $d>1$, the random variable $\Xi$ is a sum of dependent random variables.
Such sums of random variables have been studied by several authors and are referred to as 
$U$-\emph{statistics}. 
Again, as an example of $U$-statistics, the reader may think of the number of 
triangles in an Erd\H{o}s-R\'enyi random graph, $G$, on $m$ 
vertices. In this case $d=3$ and $n=\binom{m}{3}$. Note that every triplet of vertices from $G$ 
uniquely determines a triplet of potential edges. Hence we can
set each $\xi_i, i=1,\ldots,n$ to be a Bernoulli random variable of parameter $p$ corresponding 
to the potential edges in $G$ and $F(\xi_{i_1},\xi_{i_2},\xi_{i_3})$ to be the indicator 
that the three potential edges, corresponding to a triplet of vertices, are all present in $G$ thus forming a triangle. \\
U-statistics is a class of unbiased estimators, introduced by Hoeffding \cite{HoeffdingUstat}, that 
has attracted considerable attention; see for example the works of Arcones \cite{Arcones}, Gin\'e et al. \cite{Gine}, Hoeffding 
\cite{HoeffdingUstat}, Janson \cite{Janson}, Joly et al. \cite{Joly}, just to name a few references. 
Let us bring to the reader's attention the following concentration inequality on $U$-statistics, 
which is due to 
Hoeffding  (see \cite{Hoeffdingone}, Section $5$; see also Janson \cite{Janson}, Section $4$). 
In order to avoid dealing with any rounding issues, we state the result for the case in which 
$d$ \emph{divides} $n$, i.e. $n=k\cdot d$, for some $k\in \{1,2,\ldots\}$.\\

\begin{thm}[Hoeffding, $1963$]
\label{ustatHoeff}
Let $d,n$ be positive integers such that $d$ divides $n$, 
i.e. $n=k\cdot d$, for some positive integer $k$. 
Suppose that $X$ is a random variable that can be written in the form 
\[ X = \sum_{(i_1,\ldots,i_d)\in [n]^{d}_{<}} F(\xi_{i_1},\ldots,\xi_{i_d}), \]
where $\xi_1,\ldots, \xi_n$ are \emph{independent and identically distributed} random variables and 
$F: \mathbb{R}^d \rightarrow [0,1]$ is a bounded function. Set $p:=\mathbb{E}[F(\xi_{i_1},\ldots, \xi_{i_d})], N_d :=\binom{n-1}{d-1}$. If $y= \mathbb{E}[X] + t\binom{n}{d}$, for 
some $t\in\left(0,1-p\right)$, then 
\[ \mathbb{P}\left[X \geq y \right]  \leq \inf_{h>0}\; e^{-hy}\; \mathbb{E}\left[e^{N_d\cdot B_{k,p}}\right] , \]
where $B_{k,p}$ is a binomial $\text{Bin}(k,p)$ random variable. Furthermore, 
\[ \inf_{h>0}\; e^{-hy}\; \mathbb{E}\left[e^{N_d\cdot B_{k,p}}\right]   \leq    \text{exp}\left(-2k t^2 \right)  . \]
\end{thm}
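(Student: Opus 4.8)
The plan is to use Hoeffding's device of writing the $U$-statistic as an average, over all partitions of the index set into blocks, of sums of \emph{independent} bounded random variables, and then to apply the exponential moment method. First I would fix this combinatorial representation. Since $n=kd$, let $\mathcal{P}$ be the set of all partitions of $[n]$ into $k$ unordered blocks of size $d$, so that $M:=|\mathcal{P}|=n!/\big((d!)^{k}k!\big)$. For $P=\{B_1,\dots,B_k\}\in\mathcal{P}$ write $F(\xi_{B})$ for the value of $F$ at $(\xi_{b_1},\dots,\xi_{b_d})$, where $b_1<\cdots<b_d$ are the elements of $B$, and set $V(P)=\sum_{j=1}^{k}F(\xi_{B_j})$. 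Since the blocks of $P$ are pairwise disjoint and $\xi_1,\dots,\xi_n$ are i.i.d., the summands of $V(P)$ are independent, each distributed as $Y:=F(\xi_1,\dots,\xi_d)\in[0,1]$ with $\mathbb{E}[Y]=p$. Moreover, each $d$-subset $A\subseteq[n]$ occurs as a block in exactly $(n-d)!/\big((d!)^{k-1}(k-1)!\big)$ of the partitions, so, using $\binom{n}{d}=kN_d$,
\[ \frac{1}{M}\sum_{P\in\mathcal{P}}V(P)=\frac{k}{\binom{n}{d}}\sum_{1\le i_1<\cdots<i_d\le n}F(\xi_{i_1},\dots,\xi_{i_d})=\frac{1}{N_d}\,X. \]
In particular $\mathbb{E}[X]=\binom{n}{d}p=kN_d\,p$, so the threshold is $y=kN_d(p+t)$. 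Note that no symmetry of $F$ is needed here, since $F$ is only ever evaluated at increasing tuples.

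Next I would run the Chernoff argument on this representation. Fix $h>0$. Because $x\mapsto e^{hx}$ is convex and $X=N_d\cdot M^{-1}\sum_{P}V(P)$ is an average of the numbers $N_dV(P)$, Jensen's inequality gives $e^{hX}\le M^{-1}\sum_{P}e^{hN_dV(P)}$; taking expectations and using that every $V(P)$ has the law of $Y_1+\cdots+Y_k$ with $Y_1,\dots,Y_k$ i.i.d.\ copies of $Y$ yields $\mathbb{E}[e^{hX}]\le\big(\mathbb{E}[e^{hN_dY}]\big)^{k}$. Applying the elementary bound $e^{su}\le 1-u+ue^{s}$, valid for $u\in[0,1]$ and $s\ge0$, with $s=hN_d$, gives $\mathbb{E}[e^{hN_dY}]\le 1-p+pe^{hN_d}$, hence $\mathbb{E}[e^{hX}]\le(1-p+pe^{hN_d})^{k}=\mathbb{E}[e^{hN_dB_{k,p}}]$ for $B_{k,p}\sim\mathrm{Bin}(k,p)$. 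Markov's inequality then yields $\mathbb{P}[X\ge y]\le e^{-hy}\mathbb{E}[e^{hX}]\le e^{-hy}\mathbb{E}[e^{hN_dB_{k,p}}]$, and optimising over $h>0$ gives the first displayed bound.

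For the remaining estimate I would substitute $s=hN_d$ and factor the Bernoulli moment generating function: $e^{-hy}\mathbb{E}[e^{hN_dB_{k,p}}]=\big(e^{-s(p+t)}(1-p+pe^{s})\big)^{k}$, so the infimum over $h>0$ equals $\big(\inf_{s>0}e^{-s(p+t)}(1-p+pe^{s})\big)^{k}$. By the scalar optimisation already carried out in Theorem \ref{folklore} (applied with the parameter $q=p+t\in(p,1)$, which is legitimate since $t\in(0,1-p)$), one has $\inf_{s>0}e^{-s(p+t)}(1-p+pe^{s})=e^{-D(p+t\,\|\,p)}$, and the standard estimate $D(q\|p)\ge 2(q-p)^{2}$ gives $e^{-D(p+t\|p)}\le e^{-2t^{2}}$. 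Raising to the $k$-th power produces $\exp(-2kt^{2})$, as claimed.

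I expect the only genuine difficulty to be the bookkeeping in the first step: correctly counting the partitions in $\mathcal{P}$ and those containing a fixed block, and verifying that the weighted sum collapses to $N_d^{-1}X$. Once the representation $X=N_d\cdot M^{-1}\sum_{P}V(P)$, with each $V(P)$ a sum of $k$ i.i.d.\ $[0,1]$-valued variables, is in place, everything afterward is the textbook exponential-moment computation. The single point to state with care is precisely that independence and identical distribution of the $\xi_i$ are what make the $k$ summands of every $V(P)$ independent and identically distributed, which is where those hypotheses are used.
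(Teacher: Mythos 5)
Your proof is correct and takes essentially the same route as the paper: Hoeffding's partition-averaging decomposition of $X$ into block sums $V(P)$ of $k$ i.i.d.\ $[0,1]$-valued summands, Jensen's inequality for the exponential, comparison with the $\mathrm{Bin}(k,p)$ moment generating function, and the scalar Chernoff optimisation giving $e^{-kD(p+t\|p)}\le e^{-2kt^2}$. If anything, your use of the elementary bound $e^{su}\le 1-u+ue^{s}$ for $u\in[0,1]$ is slightly more careful than the paper's wording, which treats each $X_P$ as exactly binomial even though $F$ is only assumed $[0,1]$-valued.
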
 

We remark that a similar statement holds true for sums of independent random variables and 
has been the content of prior work (see \cite{PeRaWa}).  
By exploiting convexity, combined with 
similar ideas as in Section \ref{subsectionmartingale}, we deduce the 
following refined version of the previous theorem. \\

\begin{thm}\label{Ustatconcentration} 
Let $d,n$ be positive integers such that $d$ divides $n$, 
i.e. $n=k\cdot d$, for some positive integer $k$. 
Suppose that $X$ is a random variable that can be written in the form 
\[ X = \sum_{(i_1,\ldots,i_d)\in [n]^{d}_{<}} F(\xi_{i_1},\ldots,\xi_{i_d}), \]
where $\xi_1,\ldots, \xi_n$ are \emph{independent and identically distributed} random variables and 
$F: \mathbb{R}^d \rightarrow [0,1]$ is a bounded function. Set $p:=\mathbb{E}[F(\xi_{i_1},\ldots, \xi_{i_d})], N_d :=\binom{n-1}{d-1}$ and 
denote by 
$\mathcal{F}$ the set consisting of all functions $f:\mathbb{R} \rightarrow [0,+\infty)$ 
that are \emph{increasing} and \emph{convex}. If $y= \mathbb{E}[X] + t\binom{n}{d}$, for 
some $t\in\left(0,1-p\right)$, then 
\[ \mathbb{P}\left[X \geq y \right]  \leq \inf_{f\in \mathcal{F}}\; \frac{1}{f(y)} \mathbb{E}\left[f\left( N_d\cdot B_{k,p} \right)\right] , \]
where $B_{k,p}$ is a binomial $\text{Bin}(k,p)$ random variable. Moreover, if $t$ belongs to the interval
$\left(\frac{p(1-p)(e-1)}{1-p+ep},1-p\right)$  and $k(p+t)$ is a \emph{positive integer} from the interval $(kp, k)$, we have
\begin{eqnarray*} \inf_{f\in \mathcal{F}}\; \frac{1}{f(y)} \mathbb{E}\left[f\left( N_d\cdot B_{k,p} \right)\right]   &\leq&   \frac{hN_d +1}{e^{hN_d}}  \cdot\left(\text{exp}\left(-2k t^2 \right) - T_{2}(k,p,t)\right)\\
&+& \left(1- \frac{hN_d +1}{e^{hN_d}} \right)\mathbb{P}\left[B_{k,p}=k(p+t)\right] ,
\end{eqnarray*}
where 
\[  T_{2}(k,p,t):= \sum_{j=0}^{k(p+t)-1}e^{h(N_dj-y)}\mathbb{P}\left[B_{k,p}=j\right]   \]
and $h$ is the positive real satisfying 
\[ e^{-hy} \;
\mathbb{E}\left[e^{hB_{k,p}}\right]  = \inf_{s>0} \; e^{-(sy} \;
\mathbb{E}\left[e^{sB_{k,p}}\right] . \]
The later bound is strictly less than the bound of Theorem \ref{ustatHoeff}.
\end{thm}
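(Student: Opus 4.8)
The plan is to split the argument into a \emph{reduction} step, carried out with Hoeffding's classical averaging device, followed by an \emph{optimisation} over $f\in\mathcal{F}$ that mirrors the proof of Theorem~\ref{genmcdm}; the final (strict-improvement) assertion is then a short convexity remark, exactly as for that theorem. Throughout I will write $m:=k(p+t)$ and $w:=\frac{hN_d+1}{e^{hN_d}}$.

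For the reduction I would first record the elementary identities: since the $\xi_i$ are i.i.d.\ one has $\mathbb{E}[X]=\binom{n}{d}p$, hence $y=\binom{n}{d}(p+t)=N_d\,m$; moreover $N_d\,k=\binom{n}{d}$, so $N_d B_{k,p}\ge y\iff B_{k,p}\ge m$. Next, for each partition $\mathcal{B}=\{I_1,\dots,I_k\}$ of $[n]$ into $d$-element blocks let $F(\xi_{I_r})$ denote $F$ evaluated on the tuple $(\xi_i)_{i\in I_r}$ listed in increasing order (this is all that is needed, since $F$ is only ever applied to increasingly-ordered tuples, so no symmetry of $F$ is required), and set $W_{\mathcal{B}}:=\frac1k\sum_{r=1}^{k}F(\xi_{I_r})$. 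Drawing $\mathcal{B}$ uniformly at random, by symmetry under permutations of $[n]$ each $I_r$ is marginally a uniform $d$-subset, so $\mathbb{E}_{\mathcal{B}}[W_{\mathcal{B}}]=\binom{n}{d}^{-1}\sum_{|S|=d}F(\xi_S)=\binom{n}{d}^{-1}X$ pointwise in the $\xi_i$. Now fix $f\in\mathcal{F}$ with $f(y)>0$ and put $g(x):=f(N_d x)\in\mathcal{F}$. Conditioning on the $\xi_i$ and using convexity of $f$, Jensen's inequality gives $f(X)=f\big(\mathbb{E}_{\mathcal{B}}[N_d k\,W_{\mathcal{B}}]\big)\le\mathbb{E}_{\mathcal{B}}\big[g(k W_{\mathcal{B}})\big]$; taking expectations over the $\xi_i$ and interchanging the two finite averages reduces the task to bounding $\mathbb{E}\big[g\big(\sum_{r=1}^{k}F(\xi_{I_r})\big)\big]$ for a \emph{fixed} $\mathcal{B}$. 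For fixed $\mathcal{B}$ the variables $F(\xi_{I_1}),\dots,F(\xi_{I_k})$ are independent, $[0,1]$-valued, with common mean $p$, so the Hoeffding domination that already underlies the reduction of Theorem~\ref{depHoeff} to Theorem~\ref{folklore} (replace one $F(\xi_{I_r})$ at a time by a Bernoulli$(p)$, using that $x\mapsto g(x+s)$ is convex) yields $\mathbb{E}\big[g\big(\sum_r F(\xi_{I_r})\big)\big]\le\mathbb{E}[g(B_{k,p})]=\mathbb{E}[f(N_d B_{k,p})]$. Combining, $\mathbb{E}[f(X)]\le\mathbb{E}[f(N_d B_{k,p})]$, and Markov's inequality $\mathbb{P}[X\ge y]\le f(y)^{-1}\mathbb{E}[f(X)]$ followed by taking the infimum over $f$ gives the first displayed bound.

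For the optimisation step I would use a specific piecewise-linear $f$, the analogue of the one used for Theorem~\ref{genmcdm} after rescaling by $N_d$. With $u:=hN_d$, the defining property of $h$ makes $u$ the minimiser of $s\mapsto e^{-s(p+t)}(1-p+pe^{s})$, so $e^{u}=\frac{(p+t)(1-p)}{p(1-p-t)}$ and $e^{-hy}\mathbb{E}[e^{hN_d B_{k,p}}]=H_m(k,p,t)=:M\le e^{-2kt^2}$; crucially, the hypothesis $t>\frac{p(1-p)(e-1)}{1-p+ep}$ is \emph{exactly} the inequality $e^{u}>e$, i.e.\ $u>1$. Define $f$ by $f\equiv 0$ on $(-\infty,N_d(m-1)]$, $f(N_d m)=1$, $f(N_d(m+r))=(1+u)e^{u(r-1)}$ for integers $r\ge 1$, with $f$ affine on each interval between consecutive knots $N_d(m-1+j)$ and affine beyond $N_d k$. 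Its successive slopes over the length-$N_d$ intervals are $0,\ \tfrac{1}{N_d},\ \tfrac{u}{N_d},\ \tfrac{(1+u)(e^{u}-1)}{N_d},\dots$, and these are non-decreasing precisely because $u\ge 1$ (the remaining comparisons follow from $e^{u}-1\ge u$), so $f\in\mathcal{F}$. Since $f(N_d j)=0$ for $j\le m-1$ and $1+u=e^{u}w$, a direct computation gives
\[ \frac{\mathbb{E}\big[f(N_d B_{k,p})\big]}{f(y)}=\mathbb{P}[B_{k,p}=m]+w\sum_{j>m}e^{h(N_d j-y)}\,\mathbb{P}[B_{k,p}=j]=w\big(M-T_{2}(k,p,t)\big)+(1-w)\,\mathbb{P}[B_{k,p}=m], \]
where I used $\sum_{j\ge m}e^{h(N_d j-y)}\mathbb{P}[B_{k,p}=j]=M-T_{2}(k,p,t)$ and that its $j=m$ term equals $\mathbb{P}[B_{k,p}=m]$. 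Replacing $M=H_m(k,p,t)$ by $e^{-2kt^2}\ge M$ (legitimate since $w>0$) produces the claimed bound.

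Finally, the strict-improvement statement follows as for Theorem~\ref{genmcdm}: since $T_{2}(k,p,t)\ge 0$ and $w\in(0,1)$ (because $1+u<e^{u}$ for $u>0$), the bound just obtained is at most $Q:=w\,e^{-2kt^2}+(1-w)\,\mathbb{P}[B_{k,p}=m]$, a convex combination of $e^{-2kt^2}$ and $\mathbb{P}[B_{k,p}=m]$. By Hoeffding's Theorem~\ref{folklore} applied to $B_{k,p}$ one has $\mathbb{P}[B_{k,p}=m]\le\mathbb{P}[B_{k,p}\ge m]\le H_m(k,p,t)\le e^{-2kt^2}$, and the first of these is \emph{strict} because $m<k$ and $p\in(0,1)$ force $\mathbb{P}[B_{k,p}>m]>0$; hence $Q<e^{-2kt^2}$, so the refined bound lies strictly below the bound of Theorem~\ref{ustatHoeff} (and also below $H_m(k,p,t)$). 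I expect the main obstacle to be the reduction step: casting Hoeffding's averaging in a form to which Jensen's inequality applies cleanly — recognising $\binom{n}{d}^{-1}X$ as a conditional average of normalised sums of $k$ \emph{independent} $[0,1]$-valued variables, and handling a possibly non-symmetric $F$ — is the only genuinely new ingredient. Once that is in place the choice of $f$ and the ensuing bookkeeping are routine, the single delicate point being that $u>1$ (equivalently the hypothesis on $t$) is exactly what is needed for $f$ to be convex.
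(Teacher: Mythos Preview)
Your proposal is correct and follows the paper's overall strategy: Hoeffding's averaging device for the reduction, followed by evaluation at a specific convex test function for the refined bound, and the convex-combination argument for the strict improvement. Your reduction step is in fact more careful than the paper's, which tacitly treats each $X_P$ as binomial even though $F$ is $[0,1]$-valued; you correctly insert the Bernoulli-domination step (the one-variable convexity bound $\mathbb{E}[\phi(Y)]\le (1-p)\phi(0)+p\phi(1)$).

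The one noteworthy difference is the choice of test function in the second statement. The paper uses the single-kink function $f(x)=\max\{0,h(x-y)+1\}$, which at the lattice points $N_dj$ gives $0$ for $j\le m-1$, $1$ at $j=m$, and $1+u(j-m)$ for $j>m$; this produces an expression \emph{smaller} than the target, and the paper then relaxes it via the inequality $e^{u r}-(1+ur)\ge(1-w)e^{ur}$ to reach $w(H_u-T_2)+(1-w)\mathbb{P}[B_{k,p}=m]$. Your piecewise-linear $f$, with values $0,1,(1+u),(1+u)e^{u},\dots$ at the same lattice points, is the function that hits this target \emph{exactly}, with no relaxation step; your function dominates the paper's at $j\ge m+2$, which is precisely the slack the paper gives away. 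Either route is fine: the paper's $f$ is simpler to write down but needs an extra inequality, while yours is tailored so that the bookkeeping collapses to an identity. Both rely on $u=hN_d>1$ (equivalently the hypothesis on $t$) for convexity at the first nontrivial slope comparison.
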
 

In other words, the previous result improves upon Theorem \ref{ustatHoeff} 
by adjusting a "missing factor" that is equal to $ \frac{hN_d +1}{e^{hN_d}} < 1$. 
We prove this result in 
Section \ref{ustatsection}. \\
The following five section are devoted to the proofs of the statements we discussed so far. 
Finally, in Section \ref{isolated}, we present some applications to the theory of random graphs.

\section{Covariance estimates}\label{mainres}

\subsection{Proofs of Theorems \ref{expfunct} and \ref{depHoeff}}

We  begin with the following Lemma in which we collect some properties of 
the random variables $Z_A$, defined in Theorem \ref{expfunct}. 
Recall that
$\partial_j[n]$ denotes  the family consisting 
of all subsets of $[n]$ of cardinality $j\in\{0,1,\ldots,n\}$.\\

\begin{lemma}\label{mainlem} Fix a positive integer $n$  and let $\{x_1,\ldots,x_n\}$ be real numbers from the interval $[0,1]$.
For every $A\subseteq [n]$ let $\zeta_A$ be defined as
\[ \zeta_A = \prod_{i\in A}x_i \prod_{i\in [n]\setminus A}(1-x_i) . \]
Then
\[ \sum_{A\subseteq [n]}\zeta_A = \sum_{j=0}^{n} \sum_{A \in \partial_j [n]} \zeta_A = 1 \quad \text{and}\quad \sum_{i=1}^{n} x_i =   \sum_{j=0}^{n}  j \sum_{A\in \partial_j[n]} \zeta_A  . \]
\end{lemma}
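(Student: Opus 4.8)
The key observation is that $\zeta_A$ is exactly the probability mass function of a product of independent Bernoulli variables. Concretely, I would introduce an auxiliary probability space: let $\eta_1,\ldots,\eta_n$ be independent Bernoulli random variables with $\mathbb{P}[\eta_i = 1] = x_i$ and $\mathbb{P}[\eta_i = 0] = 1 - x_i$ (this is legitimate since each $x_i \in [0,1]$). Then for any $A \subseteq [n]$,
\[
\mathbb{P}\big[\{i : \eta_i = 1\} = A\big] = \prod_{i \in A} x_i \prod_{i \in [n]\setminus A}(1-x_i) = \zeta_A .
\]
Once this identification is in place, both claimed identities become statements about this distribution.

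For the first identity, $\sum_{A\subseteq[n]} \zeta_A = 1$ is just the statement that the events $\{\{i:\eta_i=1\} = A\}$, ranging over all $A \subseteq [n]$, partition the sample space, so their probabilities sum to $1$. The regrouping $\sum_{A\subseteq[n]}\zeta_A = \sum_{j=0}^n \sum_{A\in\partial_j[n]}\zeta_A$ is merely the partition of the power set $2^{[n]}$ according to cardinality. For the second identity, note that $\sum_{i=1}^n \eta_i$ counts $|\{i:\eta_i=1\}|$, so $\mathbb{E}\big[\sum_{i=1}^n \eta_i\big] = \sum_{j=0}^n j\,\mathbb{P}\big[|\{i:\eta_i=1\}| = j\big] = \sum_{j=0}^n j \sum_{A\in\partial_j[n]}\zeta_A$; on the other hand, by linearity of expectation, $\mathbb{E}\big[\sum_{i=1}^n \eta_i\big] = \sum_{i=1}^n \mathbb{E}[\eta_i] = \sum_{i=1}^n x_i$. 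Comparing the two evaluations gives the claim.

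If one prefers to avoid invoking an auxiliary probability space, the same computation can be done purely combinatorially by expanding the product $\prod_{i=1}^n (x_i + (1-x_i)) = 1$ and collecting terms, which directly produces $\sum_{A\subseteq[n]}\zeta_A$; and differentiating (or a direct term-by-term inspection of which $\zeta_A$ a given index $i$ contributes an $x_i$-factor to) yields the second identity. Either way the proof is entirely routine; there is no real obstacle here, since the lemma is essentially the observation that $(\zeta_A)_A$ is the law of a sum of independent indicators and the two displays are its total mass and its mean.
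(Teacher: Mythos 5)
Your proof is correct, but it takes a genuinely different route from the paper. The paper proves both identities by induction on $n$: it splits $\sum_{A}\zeta_A$ according to whether $n\in A$ or $n\notin A$, observes that the two parts carry factors $x_n$ and $1-x_n$ respectively in front of the corresponding quantities for $[n-1]$, and invokes the inductive hypothesis (and, for the weighted sum, the already-proved first identity). You instead interpret $(\zeta_A)_{A\subseteq[n]}$ as the joint law of independent Bernoulli variables $\eta_i$ with $\mathbb{P}[\eta_i=1]=x_i$, which is legitimate precisely because $x_i\in[0,1]$; the first identity is then total probability one, and the second is the computation of $\mathbb{E}\bigl[\sum_i\eta_i\bigr]$ in two ways, once by conditioning on the level sets $|\{i:\eta_i=1\}|=j$ and once by linearity of expectation. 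Your fallback algebraic argument (expanding $\prod_i\bigl(x_i+(1-x_i)\bigr)=1$ and tracking which terms contain an $x_i$-factor) is the de-randomized version of the same idea and is equally valid. What your approach buys is brevity and conceptual transparency: it makes visible that the lemma is simply the statement that $\sum_i x_i$ is a convex combination of $\{0,1,\ldots,n\}$ with weights given by a (generalized Bernoulli) distribution, which is exactly how the lemma is used in Theorem \ref{depHoeff}. What the paper's induction buys is complete self-containment: it requires no auxiliary probability space and treats the statement as a bare identity of real numbers, which may be preferable stylistically since the lemma is later applied pointwise to random variables $X_i$ in place of the $x_i$.
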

\begin{proof} The proofs of both statements are by induction on $n$.  
The first statement is clearly true for $n=1$. Assuming that it holds 
true for $n-1$, we prove it for $n>1$. 
Given a set $B\subseteq [n-1]$, we define $\zeta_B^{\prime} = \prod_{i\in B}x_i \prod_{i\in [n-1]\setminus B}(1-x_i)$.
Now notice that we can write 
\[ \sum_{j=0}^{n}\sum_{A \in \partial_j [n]} \zeta_A = \sum_{j=1}^{n}\sum_{n\in A \in \partial_j [n]} \zeta_A + \sum_{j=0}^{n-1}\sum_{n \notin A \in \partial_j [n]} \zeta_A ,
\]
where summation over $n\notin A\in \partial_j[n]$ means that the sum runs over those $A\in \partial_j[n]$ 
that do not contain $n$; similarly summation over $n \in A \in \partial_j [n]$ means 
that the sum runs over $A$ that contain $n$.
Now each term $\zeta_A$ in the first sum on the right hand side is multiplied by $x_n$ and each term in the 
second sum is multiplied by $1-x_n$. This implies that 
\[ \sum_{j=1}^{n}\sum_{n\in A \in \partial_j [n]} \zeta_A +\sum_{j=0}^{n-1} \sum_{n \notin A \in \partial_j [n]} \zeta_A =  x_n\sum_{j=0}^{n-1} \sum_{B \in \partial_j [n-1]} \zeta_B^{\prime} + (1-x_n)\sum_{j=0}^{n-1}\sum_{B \in \partial_j [n-1]} \zeta_B^{\prime} .\]
The inductional hypothesis finishes the proof of the first statement. The proof of the second statement is similar.
It is clearly true for $n=1$; assuming that it holds true for $n-1$ we prove it for $n>1$. 
Notice that  we can write
\[ \sum_{j=0}^{n} j  \sum_{A\in \partial_j[n]} \zeta_A  = \sum_{j=0}^{n-1} j \sum_{n\notin A\in \partial_j[n]} \zeta_A +  \sum_{j=1}^{n}j \sum_{n\in A\in \partial_j[n]} \zeta_A  .\]
The inductional hypothesis implies that the first addend in the right hand side of the last equation can be 
written as
 \[ \sum_{j=0}^{n-1} j \sum_{n\notin A\in \partial_j[n]} \zeta_A = (1-x_n)\sum_{j=0}^{n-1}j\sum_{B\in \partial_j[n-1]} \zeta_B^{\prime} = (1-x_n) \sum_{i=1}^{n-1}x_i . \]
 The second addend can be written as
 \begin{eqnarray*} 
 \sum_{j=1}^{n}j \sum_{n\in A\in \partial_j[n]} \zeta_A  &=&  x_n\sum_{j=0}^{n-1} (j+1)  \sum_{B\in \partial_j[n-1]}\zeta_B^{\prime}\\
&=&  x_n \sum_{j=0}^{n-1} j \sum_{B\in \partial_j[n-1]}\zeta_B^{\prime}   + x_n \sum_{j=0}^{n-1} \;  \sum_{B\in \partial_j[n-1]}\zeta_B^{\prime} \\
 &=& x_n \sum_{i=1}^{n-1}x_i + x_n , 
 \end{eqnarray*}
 where the last equality comes from the inductional hypothesis and the first statement. 
 Adding up the expressions in the last two equations yields the result.
\end{proof}

A basic ingredient in the proofs of most results in this paper   
is Theorem \ref{depHoeff}, which we are now in position to prove. 

\begin{proof}[Proof of Theorem \ref{depHoeff}] 
The claim that $\sum_A \mathbb{E}\left[Z_A\right] =1$, follows from the previous lemma.
Fix a function $f\in \mathcal{F}_{icx}$. Since $f$ is non-negative and increasing, 
Markov's inequality yields
\[ \mathbb{P}\left[\sum_{i=1}^{n}X_i \geq t\right] \leq  \frac{1}{f(t)}\; \mathbb{E}\left[f\left(\sum_{i=1}^{n}X_i\right)\right] . \]
Now Lemma \ref{mainlem} implies that
\[ \sum_{i=1}^{n} X_i =   \sum_{j=0}^{n}\;  \sum_{A\in \partial_j[n]} Z_A \cdot j  \]
and so, since $\sum_{A\subseteq [n]}Z_A =1$, it follows that 
$\sum_{i=1}^{n} X_i$ is a convex combination of the set of integers $\{0,1,\ldots,n\}$. 
Since $f$ is convex, we conclude 
\[ f\left(\sum_{i=1}^{n} X_i\right) \leq  \sum_{j=0}^{n}\;  \sum_{A\in \partial_j[n]} Z_A \cdot f(j) \] 
which, in turn, implies that
\[ \mathbb{E}\left[f\left(\sum_{i=1}^{n} X_i\right) \right]\leq  \sum_{j=0}^{n}\;  \sum_{A\in \partial_j[n]} \mathbb{E}\big[Z_A\big] \cdot f(j) .\]
The result follows.
\end{proof}

We can now proceed with the proof of Theorem \ref{expfunct}. 

\begin{proof}[Proof of Theorem \ref{expfunct}] 
Since $f(x)=e^{hx}, h,x>0$ is a convex, increasing and non-negative function, Theorem \ref{depHoeff}  
and the hypothesized estimate on $\mathbb{E}[Z_A]$ yield
\begin{eqnarray*} 
\mathbb{P}\left[\sum_{i=1}^{n}X_i \geq t\right] &\leq& e^{-ht}  \sum_{j=0}^{n}\;  \sum_{A\in \partial_j[n]} \mathbb{E}\big[Z_A\big] \cdot e^{hj} \\
&\leq& e^{-ht}  \sum_{j=0}^{n}\; \binom{n}{j} \gamma^{j} \delta^{n-j} \cdot e^{hj} \\
&=& e^{-ht} \left(\delta + \gamma e^{h} \right)^n ,\; \text{for}\; h>0,
\end{eqnarray*}
where the last equality follows from the binomial theorem. 
If we minimise the last expression with respect to $h>0$, we get 
$e^h = \frac{t\delta}{(n-t)\gamma}$. Therefore 
\[ \mathbb{P}\left[\sum_{i=1}^{n}X_i \geq t\right] \leq \gamma^{t} \delta^{n-t} \left(\frac{n-t}{t}\right)^t \left(\frac{n}{n-t}\right)^n   \]
and the first statement follows. 
Since $t=n\gamma + n\gamma\varepsilon$, 
we can write the right hand side of the last inequality as 
\[ \gamma^{t} \delta^{n-t} \left(\frac{n-t}{t}\right)^t \left(\frac{n}{n-t}\right)^n = 
\left\{ \left(\frac{\gamma}{\gamma + \gamma\varepsilon}\right)^{\gamma + \gamma\varepsilon} 
\left( \frac{\delta}{1-\gamma - \gamma\varepsilon} \right)^{1-\gamma - \gamma\varepsilon}   \right\}^n ,\]
which in turn is equal to
$\left(\frac{\gamma}{\gamma + \gamma\varepsilon}\right)^{\gamma + \gamma\varepsilon} 
\left( \frac{1-\gamma}{1-\gamma - \gamma\varepsilon} \right)^{1-\gamma - \gamma\varepsilon} 
\left( \frac{\delta}{1-\gamma} \right)^{1-\gamma - \gamma\varepsilon}$ and proves the result. 
\end{proof}

Likewise, Theorem \ref{Linial} can be obtained from Theorem \ref{depHoeff} by suitably 
choosing a function $f\in \mathcal{F}$. 

\begin{proof}[Proof of Theorem \ref{Linial}] 
We apply Theorem \ref{depHoeff} to a suitably chosen function.
Given positive integer $k$ such that $0<k<\beta n$, define the sequence $\{a_m\}_{m=0}^{n}$ 
by setting $a_m=0$, for $m\in\{0,1,\ldots,k-1\}$ and $a_m = \binom{m}{k}$, for $m\in \{k,k+1,\ldots,n\}$.  
Now let $g(x)$ 
to be the function defined by setting  $g(m)=a_m$, for $m\in\{0,1,\ldots,n\}$ 
and $g(\cdot)$ is linear between consecutive values, $g(m), g(m+1)$. 
It is easy to see, by comparing slopes, that the term $a_m, m\geq k$, is to the right of the 
line passing through the points 
$a_{m-1}$ and  $a_{m+1}$. This implies that 
$g(x)$ is convex, increasing and non-negative and so
Theorem \ref{depHoeff} yields
\[ \mathbb{P}\left[\sum_{i=1}^{n}X_i \geq \beta n\right] \leq \frac{1}{\binom{\beta n}{k}} \sum_{j=k}^{n} \sum_{A\in \partial_j[n]} \mathbb{E}\left[Z_A\right] \cdot g(j) = \frac{1}{\binom{\beta n}{k}} \sum_{A: |A|\geq k} \binom{|A|}{k} \mathbb{E}\left[Z_A\right]  . \]
Since the random variables $X_1,\ldots,X_n$ are indicators, 
the result follows upon observing that 
\[ \sum_{A: |A|\geq k} \binom{|A|}{k} \mathbb{E}\left[Z_A\right] = \sum_{A:|A|=k} \sum_{T: A\subseteq T} \mathbb{E}\left[Z_T\right] =
 \sum_{A:|A| =k} \mathbb{E}\left[\prod_{i\in A}X_i\right] . \]
\end{proof}

We proceed with yet another proof of the previous result and a corresponding lower bound.  

\begin{proof}[Proof of Theorem \ref{Linialrefined}] 
Given an outcome of the random variables $X_1,\ldots,X_n$, define $H_k$ to be 
the random variable that counts the number of indices $i\in\{1,\ldots,n\}$ for which
$X_i=1$ in $k$ draws without replacement from the set of indices $\{1,\ldots,n\}$. Notice that 
$H_k$ is a mixture of hypergeometric distributions. 
Now we can write 
\begin{eqnarray*} \mathbb{P}\left[H_k = k\right] &=& \sum_{j=k}^{n} \mathbb{P}\left[H_k=k\bigg| \sum_{i=1}^{n}X_i =j\right]\cdot \mathbb{P}\left[ \sum_{i=1}^{n}X_i =j\right] \\
&\geq& \sum_{j=\beta n}^{n} \mathbb{P}\left[H_k=k\bigg| \sum_{i=1}^{n}X_i =j\right]\cdot \mathbb{P}\left[ \sum_{i=1}^{n}X_i =j\right] \\
&=& \sum_{j=\beta n}^{n} \frac{\binom{j}{k}}{\binom{n}{k}} \cdot\mathbb{P}\left[ \sum_{i=1}^{n}X_i =j\right] \\
&\geq&  \frac{\binom{\beta n}{k}}{\binom{n}{k}} \cdot\mathbb{P}\left[ \sum_{i=1}^{n}X_i \geq \beta n\right].
\end{eqnarray*}
For $T\subseteq [n]$ set, as usual, $Z_T = \prod_{i\in T}X_i \prod_{i\in [n]\setminus T}(1-X_i)$. 
The upper bound follows upon observing that 
\begin{eqnarray*}\binom{n}{k} \mathbb{P}\left[H_k = k\right] &=&\sum_{j=k}^{n}\binom{j}{k} \cdot\mathbb{P}\left[ \sum_{i=1}^{n}X_i =j\right]
= \sum_{j=k}^{n}\binom{j}{k}  \sum_{T: |T|=j} \mathbb{E}\left[Z_T \right] \\
&=&  \sum_{A:|A|=k}\; \sum_{T: A\subseteq T} \mathbb{E}\left[Z_T\right] = \sum_{A: |A| =k} \mathbb{E}\left[\prod_{i\in A}X_i\right]. 
\end{eqnarray*}
We now proceed with the proof of the lower bound. 
Given an outcome of the random variables $X_1,\ldots,X_n$, define $H_{\beta n}$ to be 
the random variable that counts the number of indices $i\in\{1,\ldots,n\}$ for which
$X_i=1$ in $\beta n$ draws without replacement from the set of indices $\{1,\ldots,n\}$. 
Notice that 
\[ \mathbb{P}\left[ \sum_{i=1}^{n}X_i \geq \beta n\right] \geq \mathbb{P}\left[H_{\beta n} = \beta n\right] \]
and so it is enough to estimate $\mathbb{P}\left[H_{\beta n} = \beta n\right]$ from below. Now, using 
a similar computation as before, we can write
\begin{eqnarray*} \mathbb{P}\left[H_{\beta n} = \beta n\right] 
&=& \sum_{j=\beta n}^{n} \frac{\binom{j}{\beta n}}{\binom{n}{\beta n}} \cdot\mathbb{P}\left[ \sum_{i=1}^{n}X_i =j\right] \\
&=& \frac{1}{\binom{n}{\beta n}}  \sum_{j=\beta n}^{n} \binom{j}{\beta n} \sum_{T: |T|=j} \mathbb{E}\left[ Z_T \right] \\
&=& \frac{1}{\binom{n}{\beta n}} \sum_{A: |A| =\beta n} \mathbb{E}\left[\prod_{i\in A}X_i\right] .
\end{eqnarray*}
The result follows. 
\end{proof}

We end this section with the proof of Theorem \ref{bincoupling}. The proof will require the following, 
classical, result. \\

\begin{thm}[Hoeffding, $1956$]
\label{poissontrials} Let $B_1,\ldots,B_n$ be independent Bernoulli $0/1$ random 
variables whose mean equals $p_1,\ldots,p_n$, respectively. Then 
\[ \mathbb{P}\left[\sum_{i=1}^{n}B_i \geq b\right] \geq \mathbb{P}\left[B_{n,p}\geq b\right] , \]
when $0\leq b \leq np, \; p=\frac{1}{n}\sum_{i=1}^{n}p_i$ and $B_{n,p}$ is a binomial 
distribution of parameters $n$ and $p$.
\end{thm}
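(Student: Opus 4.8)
The plan is to recast the assertion as the statement that, for each integer $b$ with $0\le b\le np$, the functional
\[
\Phi_b(p_1,\ldots,p_n)\ :=\ \mathbb{P}\!\left[\sum_{i=1}^{n}B_i\ge b\right]
\]
attains its \emph{minimum}, over the polytope $\Delta:=\{(p_1,\ldots,p_n)\in[0,1]^n:\sum_i p_i=np\}$, at the point $(p,\ldots,p)$, where it equals $\mathbb{P}[B_{n,p}\ge b]$. The case $b=0$ is trivial (both sides are $1$), and if $np\notin\mathbb{Z}$ the condition ``$b\le np$'' just means $b\le\lfloor np\rfloor$. Since $\mathbb{E}\bigl[z^{\sum_i B_i}\bigr]=\prod_i(1-p_i+p_iz)$, every $\mathbb{P}[\sum_i B_i=k]$, hence $\Phi_b$, is a \emph{multilinear} polynomial in $p_1,\ldots,p_n$; in particular $\Phi_b$ is continuous, and since $\Delta$ is compact a minimiser exists.

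The core of the proof is a two-coordinate smoothing step. Fix indices $i\ne j$, freeze the remaining coordinates together with $s:=p_i+p_j$, and put $S':=\sum_{\ell\ne i,j}B_\ell$. Conditioning on $S'$ and computing $\mathbb{P}[B_i+B_j\ge b-S']$ in the four cases $b-S'\le 0$, $\,=1$, $\,=2$, $\,\ge 3$ gives
\[
\Phi_b=\mathbb{P}[S'\ge b]+s\,\mathbb{P}[S'=b-1]+w\bigl(\mathbb{P}[S'=b-2]-\mathbb{P}[S'=b-1]\bigr),\qquad w:=p_ip_j,
\]
which is \emph{affine} in $w$. As $(p_i,p_j)$ runs over $\{(x,s-x):x\in[0,1],\,s-x\in[0,1]\}$ the quantity $w$ runs over an interval whose right endpoint $(s/2)^2$ is attained at $p_i=p_j$ and whose left endpoint is attained when one of $p_i,p_j$ equals $0$ (if $s\le1$) or $1$ (if $s>1$); any interior point of this interval satisfies $w>\min$ and $w<\max$ strictly. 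Since an affine function on an interval cannot be strictly minimised at an interior point, some minimiser $\vec p^{\ast}$ of $\Phi_b$ on $\Delta$ has the property that for every pair $i,j$ either $p^{\ast}_i=p^{\ast}_j$ or one of $p^{\ast}_i,p^{\ast}_j$ lies in $\{0,1\}$. Such a $\vec p^{\ast}$ must be ``clumped'': it consists of $a$ entries equal to $1$, $z$ entries equal to $0$, and $m:=n-a-z$ entries equal to a common $q\in[0,1]$ with $a+mq=np$. For such a vector $\sum_i B_i$ has the law of $a+B_{m,q}$, so $\Phi_b(\vec p^{\ast})=\mathbb{P}[B_{m,q}\ge b-a]$.

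It remains to prove that over the finite set of clumped vectors the value $\mathbb{P}[B_{m,q}\ge b-a]$ is never less than $\mathbb{P}[B_{n,p}\ge b]$, the value at $(p,\ldots,p)$. I would do this by induction on $n$ (the case $n=1$ being an identity). For a clumped vector other than $(p,\ldots,p)$ at least one of $a,z$ is positive, and one peels off a deterministic coordinate: if $a\ge1$, deleting a coordinate equal to $1$ leaves a vector in the analogue of $\Delta$ for $n-1$ with coordinate-sum $np-1$, and the inductive hypothesis (legitimate because $b-1\le np-1$) gives $\Phi_b\ge\mathbb{P}[B_{n-1,(np-1)/(n-1)}\ge b-1]$; if $a=0$ but $z\ge1$, deleting a coordinate equal to $0$ gives similarly $\Phi_b\ge\mathbb{P}[B_{n-1,np/(n-1)}\ge b]$. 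Either way one is reduced to comparing the upper tail of a binomial on $n-1$ trials with the \emph{same mean} $np$ (possibly shifted up by $1$) against that of $B_{n,p}$, evaluated at a point at most the common mean.

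This last binomial-versus-binomial comparison is the main obstacle, and it is exactly where the hypothesis $b\le np$ is consumed. One cannot bypass it by a global ``smoothing'' argument: $\Phi_b$ is \emph{not} Schur-convex on $\Delta$ (along a majorisation chain it can first increase and then decrease), so the $p_i$'s cannot be averaged monotonically. I would establish the needed inequality via the incomplete-Beta representation $\mathbb{P}[B_{N,\theta}\ge r]=\frac{N!}{(r-1)!(N-r)!}\int_0^{\theta}t^{r-1}(1-t)^{N-r}\,dt$; after cancelling the binomial prefactors, $\mathbb{P}[B_{N-1,\theta'}\ge r]\ge\mathbb{P}[B_{N,\theta}\ge r]$ with $(N-1)\theta'=N\theta=:c$ and $\theta'>\theta$ becomes
\[
\frac{N-r}{N}\int_0^{\theta'}t^{r-1}(1-t)^{N-1-r}\,dt\ \ge\ \int_0^{\theta}t^{r-1}(1-t)^{N-r}\,dt,
\]
an estimate that holds for $r\le c$ but fails once $r$ exceeds $c$ — so restricting the threshold to lie below the mean is precisely what makes it go through (the shifted variant is handled analogously). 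Finally, I would remark that all thresholds $b\le np-1$ can instead be dispatched by a one-line induction — condition on a single $B_i$ and apply the inductive hypothesis to both resulting terms, valid because $b-1,\,b\le np-1\le np-p_i$ — so that in the end only the single boundary value $b=\lfloor np\rfloor$ genuinely needs the clumping argument.
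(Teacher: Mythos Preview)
The paper does not actually prove this theorem: its ``proof'' is a one-line citation of Hoeffding's 1956 paper. So there is no in-paper argument to compare against; what you have written is a genuine attempt to reconstruct the classical proof, and its outline is essentially the route Hoeffding himself takes.

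Your smoothing computation is correct (the formula $\Phi_b=\mathbb{P}[S'\ge b]+s\,\mathbb{P}[S'=b-1]+w(\mathbb{P}[S'=b-2]-\mathbb{P}[S'=b-1])$ is right), and the reduction to a ``clumped'' minimiser is sound --- a clean way to make it rigorous is to pick, among all minimisers, one maximising $\sum_i p_i^{2}$, and then observe that two unequal interior coordinates would allow a strict increase of that quantity along a move that does not change $\Phi_b$. The conditioning shortcut for $b\le np-1$ is also correct and is a nice observation.

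The one place your proposal is not yet a proof is precisely where you flag it: the binomial-versus-binomial comparison $\mathbb{P}[B_{N-1,\theta'}\ge r]\ge \mathbb{P}[B_{N,\theta}\ge r]$ for $(N-1)\theta'=N\theta=c$ and $r\le c$. Two remarks. First, note that this inequality is itself a special case of the level-$N$ statement you are proving (take one $p_i=0$ and the rest equal to $\theta'$), so the induction has not reduced the problem --- it has isolated the hard instance, and you must prove it by an independent argument. Second, the incomplete-Beta reformulation you wrote down is correct, but the integral inequality $\tfrac{N-r}{N}\int_0^{\theta'}t^{r-1}(1-t)^{N-1-r}\,dt\ge\int_0^{\theta}t^{r-1}(1-t)^{N-r}\,dt$ is not obvious and you have not indicated how you would prove it; writing both sides as functions of $c$ and differentiating leads to a comparison of the form $N^{N-1}(N-r)(N-1-c)^{N-1-r}\gtrless (N-1)^{N-1}(N-c)^{N-r}$, which requires further work to settle for all $c\ge r$. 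This step is where Hoeffding's original paper does the real labour, and you would need to fill it in (or consult his argument) before the proof is complete.
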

\begin{proof} See \cite{Hoeffding}, Theorem $4$.  
\end{proof}

The following proof is similar to the proof of Theorem $3.1$ form \cite{Impagliazzo}. See also Theorem $2$ in 
Siegel \cite{Siegelone}. 

\begin{proof}[Proof of Theorem \ref{bincoupling}] 
We may assume that $\mathbb{P}\left[\sum_i X_i \geq t\right] >0$.
For every outcome of the random variables 
$X_1,\ldots,X_n$, let $B_i$ be a Bernoulli $\text{Ber}(X_i), i=1,\ldots,n$, random variable. That is, 
given an outcome of $X_i, i=1,\ldots,n$, we toss $n$ independent $0/1$ coins such that the  
$i$-th coin lands on $1$ with probability $X_i$.   
Notice that, given $X_1,\ldots,X_n$, the mean of $\sum_{i=1}^{n}B_i$ equals $\sum_{i=1}^{n}X_i$ 
and so $\mathbb{E}\left[\sum_{i=1}^{n} B_i\right] = np$. 
Furthermore, given $X_1,\ldots,X_n$, define $B_n$ to be a binomial distribution of 
parameters $n$ and   
$\frac{1}{n}\sum_{i=1}^{n}X_i$; thus $\mathbb{E}[B_n]=np$ as well.
Since $\mathbb{P}\left[\sum_{i=1}^{n}X_i \geq t\right]>0$  we can write
\[ \mathbb{P}\left[\sum_{i=1}^{n}B_i \geq t-1 \bigg| \sum_{i=1}^{n}X_i \geq t \right] \leq 
\frac{\mathbb{P}\left[\sum_{i=1}^{n}B_i \geq t-1\right]}{\mathbb{P}\left[\sum_{i=1}^{n}X_i \geq t \right]}    \]
and so it is enough to estimate the probability on the left hand side from below. 
Now, 
given that $\sum_{i=1}^{n}X_i \geq t$,  the mean of $\sum_{i=1}^{n}B_i$ is 
greater than or equal to $t$ and so Theorem 
\ref{poissontrials} implies that 
\[ \mathbb{P}\left[\sum_{i=1}^{n}B_i \geq t-1 \bigg| \sum_{i=1}^{n}X_i \geq t \right] \geq \mathbb{P}\left[B_{n} \geq t-1 \bigg| \sum_{i=1}^{n}X_i \geq t \right]  .\]
It is well-known (see Kaas et al. \cite{Kaas}) that a median of a binomial distribution of parameters 
$n$ and $p$ is greater than or equal to $np -1$. 
This implies that, given $\sum_{i=1}^{n}X_i \geq t$, the probability 
that $B_{n}$ is greater than or equal to $t-1$ is at least $\frac{1}{2}$. 
Summarising, we have shown 
\[ \mathbb{P}\left[\sum_{i=1}^{n}X_i \geq t\right] \leq 2\cdot \mathbb{P}\left[\sum_{i=1}^{n}B_i \geq t-1\right] \] 
and, since we assume $t-1> np$,  
we may apply Hoeffding's Theorem \ref{folklore}  to $\mathbb{P}\left[\sum_{i=1}^{n}B_i \geq t-1\right]$ 
and conclude the result.  
\end{proof}

\subsection{Independent random variables - Proof of Theorem \ref{folklore}}

In this section we show that our main result can be seen as 
a generalisation of Hoeffding's theorem. In particular 
we provide, as a consequence of Theorem \ref{depHoeff}, a proof of Theorem \ref{folklore}.  
Notice that, in case the random variables $X_1,\ldots,X_n$ are \emph{independent},
we have
\begin{eqnarray*} 
\sum_{A\in \partial_j[n]}\mathbb{E}\left[\prod_{i\in A} X_i \prod_{i\in [n]\setminus A}(1-X_i) \right] &=& \sum_{A\in \partial_j[n]} \left(\prod_{i\in A} \mathbb{E}[X_i]\prod_{i\in [n]\setminus A}(1-\mathbb{E}[X_i])\right)\\
&=& \mathbb{P}\left[H(p_1,\ldots,p_n) =j\right], 
\end{eqnarray*}
where $p_i =\mathbb{E}[X_i], i=1,\ldots,n$ and $H(p_1,\ldots,p_n)$ is the random variable that 
counts the number of successes in $n$ independent trials where, for $i=1,\ldots,n$, 
the $i$-th trial has probability of success $p_i$.
The proof of Theorem \ref{folklore} will be based on the following, well-known, result. \\

\begin{thm}[Hoeffding, $1956$]
\label{successes} Let $H(p_1,\ldots,p_n)$ be the random variable defined above and set 
$p = \frac{1}{n}\sum_{i=1}^{n}p_i$. Let $f:\mathbb{R}\rightarrow \mathbb{R}$ be a convex function. 
Then 
\[ \mathbb{E}\left[f\left(H(p_1,\ldots,p_n)\right)\right] \leq \mathbb{E}\left[f\left(B_{n,p}\right)\right] , \]
where $B_{n,p}$ is a binomial random variable of parameters $n$ and $p$.
\end{thm}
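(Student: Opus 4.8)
The plan is to prove this by a finite smoothing argument: show that equalising any two of the parameters $p_i$ can only \emph{increase} $\mathbb{E}\left[f\left(H(p_1,\ldots,p_n)\right)\right]$, and then pass from $(p_1,\ldots,p_n)$ to $(p,\ldots,p)$ through a \emph{finite} chain of such equalisations. Concretely, I would set $\phi(q_1,\ldots,q_n):=\mathbb{E}\left[f\left(H(q_1,\ldots,q_n)\right)\right]$ for $(q_1,\ldots,q_n)\in[0,1]^n$, where $H(q_1,\ldots,q_n)=\sum_{\ell=1}^{n}B_\ell$ with $B_1,\ldots,B_n$ independent and $B_\ell$ Bernoulli of mean $q_\ell$, and show that $\phi$ is non-decreasing under $T$-transforms.

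For the single step, fix two coordinates $i\neq j$, write $a=q_i\geq b=q_j$, and let $S=\sum_{\ell\neq i,j}B_\ell$, which is independent of $(B_i,B_j)$. A $T$-transform replaces $(a,b)$ by $(a',b')=(a-\theta,b+\theta)$ with $0\leq\theta\leq a-b$, so that $a'+b'=a+b$. Conditioning on $S=s$, I would compare the two laws of the pair-sum $B_i+B_j$ on $\{0,1,2\}$: both have mean $a+b$, and since $a'b'-ab=\theta(a-b)-\theta^2\geq 0$ one checks that $\mathbb{P}[B_i+B_j=0]=(1-a)(1-b)$ and $\mathbb{P}[B_i+B_j=2]=ab$ both weakly increase while $\mathbb{P}[B_i+B_j=1]$ weakly decreases. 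Writing $c\leq 0$ for the change in the mass at $1$, mean-preservation forces the masses at $0$ and at $2$ to change by $-c/2$ each, so with $g_s(x):=f(s+x)$,
\[ \mathbb{E}\left[g_s(B_i'+B_j')\right]-\mathbb{E}\left[g_s(B_i+B_j)\right]=-\tfrac{c}{2}\bigl(g_s(0)+g_s(2)-2g_s(1)\bigr)\geq 0, \]
because $f$ is convex. Averaging over $s$ gives $\phi$ after the $T$-transform $\geq \phi$ before it.

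Next I would invoke the classical majorization lemma of Hardy, Littlewood and P\'olya: since the constant vector $(p,\ldots,p)$ is majorised by every vector with the same coordinate sum, in particular $(p,\ldots,p)\prec(p_1,\ldots,p_n)$, there is a \emph{finite} sequence of $T$-transforms carrying $(p_1,\ldots,p_n)$ to $(p,\ldots,p)$. Applying the monotonicity of the previous step along this chain yields $\phi(p_1,\ldots,p_n)\leq\phi(p,\ldots,p)=\mathbb{E}\left[f(B_{n,p})\right]$, which is the claim. (Alternatively one can avoid the majorization lemma by repeatedly averaging the current largest and smallest parameters and using that $\phi$ is a polynomial in $q_1,\ldots,q_n$, hence continuous, together with the fact that $\sum_\ell q_\ell^2$ strictly decreases unless all $q_\ell$ are equal; the Hardy--Littlewood--P\'olya route keeps the argument finite.)

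The only delicate point is the single-step comparison, and even there the work is just the bookkeeping above: recognising that equalising two Bernoulli parameters turns the conditional law of their sum into a mean-preserving spread, so that $\mathbb{E}[\text{convex}]$ moves in the right direction. Everything else is a routine induction on the number of $T$-transforms; all expectations in sight are finite sums, so no integrability issue arises, and the hypothesis used is exactly that $f$ is convex (neither monotonicity nor nonnegativity of $f$ is needed here).
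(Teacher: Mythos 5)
Your argument is correct. The single-step computation is sound: for $a\geq b$ and $0\leq\theta\leq a-b$ the products satisfy $a'b'-ab=\theta(a-b-\theta)\geq 0$ and $(1-a')(1-b')-(1-a)(1-b)=a'b'-ab\geq 0$, so the conditional law of the pair-sum indeed loses mass $|c|=2(a'b'-ab)$ at $1$ and gains $|c|/2$ at each of $0$ and $2$, and the displayed difference $-\tfrac{c}{2}\bigl(g_s(0)+g_s(2)-2g_s(1)\bigr)$ is nonnegative by convexity of $f$; averaging over the finitely many values of $S$ and over the Hardy--Littlewood--P\'olya chain of $T$-transforms carrying $(p_1,\ldots,p_n)$ to $(p,\ldots,p)$ gives the claim, and you correctly note that only convexity is used. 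Be aware, though, that the paper does not prove this statement at all: it simply cites Hoeffding (1956), Theorem $3$, so there is no internal proof to compare against. Your route is essentially the classical Schur-concavity argument (each equalisation of two Bernoulli parameters is a mean-preserving contraction of the parameter vector, hence a mean-preserving spread of the sum), which is close in spirit to Hoeffding's original variational treatment of two coordinates at a time but is packaged through majorization; what it buys is a short, self-contained, finite argument with no integrability or limiting issues, at the cost of invoking the HLP lemma that majorization is generated by finitely many $T$-transforms (your parenthetical continuity-based alternative would need a genuine limiting step, so the HLP route is the cleaner one to keep). One cosmetic point: the changes of mass at $0$ and $2$ are pinned down by mean preservation \emph{together with} total-mass preservation (or, more simply, by the direct computation you already did), not by mean preservation alone.
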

\begin{proof} See \cite{Hoeffding}, Theorem $3$. 
\end{proof}
 
We can now provide yet another proof of Hoeffding's result. 

\begin{proof}[Proof of Theorem \ref{folklore}]
The proof is similar to the proof of Theorem \ref{expfunct}. 
We have already seen that
\[ \sum_{A\in \partial_j[n]}\mathbb{E}\left[Z_A \right]  = \mathbb{P}\left[H(p_1,\ldots,p_n) =j\right] .\]
Therefore, Theorems \ref{depHoeff} and \ref{successes} together with the binomial theorem yield
\begin{eqnarray*}
\mathbb{P}\left[\sum_{i=1}^{n}X_i \geq t\right] &\leq& e^{-ht}  \sum_{j=0}^{n}\;  \mathbb{P}\left[H(p_1,\ldots,p_n) =j\right] \cdot e^{hj} \\
&\leq& e^{-ht}  \sum_{j=0}^{n}\; \binom{n}{j} p^{j}(1-p)^j \cdot e^{hj} \\
&=& e^{-ht} \left(1-p + p e^{h} \right)^n , \; \text{for}\; h>0 .
\end{eqnarray*}
The result follows upon minimising the last expression with respect to $h>0$.
\end{proof} 

\section{Martingales - Proof of Theorem \ref{genmcdm}}\label{martsection}

In this section we  prove Theorem \ref{genmcdm}. Before doing so, we need to be 
able to estimate expectations of products of certain martingale difference 
sequances. This is the content of the following result.\\

\begin{lemma}
\label{productmartingale} 
Let $Y_1,\ldots,Y_n$ be a martingale difference sequence with $-p_i\leq Y_i\leq 1-p_i$, for $i=1,\ldots,n$ 
and suitable constants $p_i\in (0,1)$. Fix a subset $A\subseteq [n]$. Then
\[ \mathbb{E}\left[\prod_{i\in A}(Y_i + p_i) \prod_{i\in [n]\setminus A} (1-p_i-Y_i)\right] \leq \prod_{i\in A}p_i \prod_{i\in [n]\setminus A} (1-p_i) . \]
\end{lemma}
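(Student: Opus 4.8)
The plan is to prove the inequality by induction on $n$, peeling off the last term $Y_n$ through the tower property and exploiting that conditioning on $\mathcal{F}_{n-1}$ annihilates $Y_n$. Since $\mathcal{F}_0$ is trivial, the base case $n=1$ is immediate: $\mathbb{E}[Y_1]=0$ gives $\mathbb{E}[Y_1+p_1]=p_1$ and $\mathbb{E}[1-p_1-Y_1]=1-p_1$, so that equality holds both when $A=\{1\}$ and when $A=\emptyset$. Incidentally, each factor $Y_i+p_i$ and each factor $1-p_i-Y_i$ lies in $[0,1]$, so this lemma says precisely that the variables $X_i:=Y_i+p_i$ satisfy the hypothesis of Theorem \ref{expfunct} (with $\gamma=p$, $\delta=1-p$ in the case of equal $p_i$); this is what makes the lemma the right bridge to the martingale bound.

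For the inductive step, assume the bound holds for every martingale difference sequence of length $n-1$, and let $Y_1,\dots,Y_n$ be as in the statement. The key point is that $\mathbb{E}[Y_n\mid\mathcal{F}_{n-1}]=0$, hence $\mathbb{E}[Y_n+p_n\mid\mathcal{F}_{n-1}]=p_n$ and $\mathbb{E}[1-p_n-Y_n\mid\mathcal{F}_{n-1}]=1-p_n$. Put
\[ W=\prod_{i\in A\cap[n-1]}(Y_i+p_i)\prod_{i\in[n-1]\setminus A}(1-p_i-Y_i), \]
which is $\mathcal{F}_{n-1}$-measurable. If $n\in A$, the product inside the expectation equals $W\cdot(Y_n+p_n)$, so by the tower property
\[ \mathbb{E}\Big[\prod_{i\in A}(Y_i+p_i)\prod_{i\in[n]\setminus A}(1-p_i-Y_i)\Big]=\mathbb{E}\big[W\,\mathbb{E}[Y_n+p_n\mid\mathcal{F}_{n-1}]\big]=p_n\,\mathbb{E}[W]. \]
Now $Y_1,\dots,Y_{n-1}$ is again a martingale difference sequence (the identities $\mathbb{E}[Y_k\mid\mathcal{F}_{k-1}]=0$ for $k\le n-1$ are inherited) with the same bounds $-p_i\le Y_i\le 1-p_i$, so the inductive hypothesis applied to the set $A\cap[n-1]$ gives $\mathbb{E}[W]\le\prod_{i\in A\cap[n-1]}p_i\prod_{i\in[n-1]\setminus A}(1-p_i)$; multiplying by $p_n\ge 0$ and using $n\in A$ yields $p_n\mathbb{E}[W]\le\prod_{i\in A}p_i\prod_{i\in[n]\setminus A}(1-p_i)$, as claimed. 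The case $n\notin A$ is identical, with $1-p_n$ in place of $p_n$ and the product inside the expectation now being $W\cdot(1-p_n-Y_n)$.

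I do not expect a genuine obstacle; the argument is essentially bookkeeping. The two points that deserve care are that one may legitimately pull the $\mathcal{F}_{n-1}$-measurable factor $W$ out of the conditional expectation before evaluating $\mathbb{E}[Y_n+p_n\mid\mathcal{F}_{n-1}]$, and that truncating a martingale difference sequence to $Y_1,\dots,Y_{n-1}$ leaves it a martingale difference sequence with respect to the same filtration, so that the inductive hypothesis is available with the same constants $p_1,\dots,p_{n-1}$.
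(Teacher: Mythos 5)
Your proof is correct and follows essentially the same route as the paper: induction on $n$, conditioning on the $\sigma$-algebra generated by $Y_1,\ldots,Y_{n-1}$, using the martingale difference property to replace the last factor by $p_n$ (or $1-p_n$), and then invoking the inductive hypothesis for the truncated sequence. The only difference is cosmetic — you write out the case $n\notin A$ explicitly, which the paper leaves to the reader.
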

\begin{proof} The proof is by induction on $n$. For $n=1$ the statement is clearly true. Assuming that 
it holds true for $n-1$, we prove it for $n$. 
Let $A$ be a subset of $[n]$. There are two case to consider. Either $n\in A$ or $n\notin A$. 
In the first case, 
the tower property of conditional expectations yields
\begin{eqnarray*} \mathbb{E}\left[\prod_{i\in A}(Y_i + p_i) \prod_{i\in [n]\setminus A} (1-p_i-Y_i)\right]   =\\ \mathbb{E}\left[\mathbb{E}\left[(Y_n+p_n)\prod_{i\in A; i\neq n}(Y_i + p_i) \prod_{i\in [n-1]\setminus A} (1-p_i-Y_i)\bigg| \mathcal{A}_{n-1} \right]\right], 
\end{eqnarray*}
where $\mathcal{A}_{n-1}$ denotes the $\sigma$-algebra generated by the random variables $Y_1,\ldots,Y_{n-1}$. Since $Y_1,\ldots,Y_n$ is a martingale difference sequence it follows that 
the latter quantity equals 
\[ p_n \;\mathbb{E}\left[ \prod_{i\in A; i\neq n}(Y_i + p_i) \prod_{i\in [n-1]\setminus A} (1-p_i-Y_i)\right] \leq\prod_{i\in A}p_i \prod_{i\in [n]\setminus A} (1-p_i) , \] 
where the inequality follows from the inductional hypothesis. The second case is proven similarly and 
so is left to the reader. 
\end{proof}

We are now ready to prove the main result of this section.

\begin{proof}[Proof of Theorem \ref{genmcdm}]  
Fix $f(\cdot)\in \mathcal{F}$. 
Since $f(\cdot)$ is non-negative and increasing, Markov's inequality implies  
\begin{eqnarray*} \mathbb{P}\left[\sum_{i=1}^{n} Y_i \geq nt  \right] &=& \mathbb{P}\left[\sum_{i=1}^{n} Y_i +np \geq nt + np  \right] \\
&\leq& \frac{1}{f(nt+np)}  \mathbb{E}\left[f\left(\sum_{i=1}^{n}(Y_i +p_i) \right)\right].  
\end{eqnarray*}
Since $\sum_{i=1}^{n}(Y_i +p_i)$ is a sum of $[0,1]$-valued random variables 
and $f$ is convex, we apply Theorem 
\ref{depHoeff} and Lemma \ref{productmartingale} to conclude 
\begin{eqnarray*}  \mathbb{E}\left[f\left(\sum_{i=1}^{n}(Y_i +p_i) \right)\right] &\leq& \sum_{j=0}^{n} \sum_{A\in \partial_j[n]} \mathbb{E}\left[\prod_{i\in A}(Y_i+p_i) \prod_{i\in [n]\setminus A}(1-p_i-Y_i)\right] f(j) \\
&\leq& \sum_{j=0}^{n} \sum_{A\in \partial_j[n]} \prod_{i\in A}p_i \prod_{i\in [n]\setminus A}(1-p_i) f(j) \\
&\leq& \sum_{j=0}^{n} \binom{n}{j} p^j (1-p)^{n-j} f(j),
\end{eqnarray*} 
where the last inequality 
follows from Theorem \ref{successes}. The first statement follows. 
In order to prove the second statement, and for the sake of completeness, let us first 
prove McDiarmid's exponential bound. Define $f_h(x)=e^{hx}$, for $h>0$. 
Then we know from the first statement that 
\[ \mathbb{P}\left[\sum_{i=1}^{n} Y_i \geq nt  \right] \leq e^{-hn(t+p)}  \sum_{j=0}^{n} \binom{n}{j} p^j (1-p)^{n-j} e^{hj} = e^{-hn(t+p)} \left(1-p + p e^h\right)^n . \]
If we now minimise the last expression with respect to $h>0$, we get that $h$ must 
be such that $e^h = \frac{(t+p)(1-p)}{p(1-p-t)}$, and so
\[ \mathbb{P}\left[\sum_{i=1}^{n} Y_i \geq nt  \right] \leq H_m(n,p,t), \]
where $H_m(n,p,t)$ is the function defined in Theorem \ref{McDiar}. The bound 
$H_m(n,p,t)\leq \text{exp}\left(-2t^2n\right)$ follows by employing standard 
estimates on the Kullback-Leibler distance. 
We now proceed with the second statement of the Theorem. 
In order to simplify the notation, let us set $\ell := n(p+t)$; recall that we assume $\ell$ is a positive integer.
Consider the function 
$g_h(x) = \max\{0,h(x-\ell)+1\}$, for the particular value $h$ obtained by  minimising 
$e^{-(snt+snp)} \mathbb{E}\left[e^{sB_{n,p}}\right]$ with respect to $s>0$.
The first statement implies that 
\[ \mathbb{P}\left[\sum_{i=1}^{n} Y_i \geq nt  \right] \leq \sum_{j\geq \ell-\frac{1}{h}}  \binom{n}{j} p^j (1-p)^{n-j} \left(h(j-\ell) +1\right)= \mathbb{E}\left[g_h\left(B_{n,p}\right)\right]. \]
Now we can write
\begin{eqnarray*}  
H_m(n,p,t) - \mathbb{E}\left[g_h\left(B_{n,p}\right)\right] &=& \sum_{j< \ell -\frac{1}{h}}  \binom{n}{j} p^j (1-p)^{n-j} e^{h(j-\ell)}\\
&+& \sum_{j\geq \ell-\frac{1}{h}}  \binom{n}{j} p^j (1-p)^{n-j} \left\{e^{h(j-\ell)}-\left(h(j-\ell) +1\right)\right\}.
\end{eqnarray*}
Notice that the assumption $t>\frac{p(1-p)(e-1)}{1-p+ep}$ implies that $h>1$ and therefore 
$\ell-\frac{1}{h} \in (\ell-1,\ell)$. The 
assumption that $\ell$ is a positive integer, i.e. a possible value of $j$,  implies 
that, for $j=\ell$, the second term in the right hand side of the last equation evaluates to $0$. 
The last two observations imply that 
\begin{eqnarray*}
H_m(n,p,t) - \mathbb{E}\left[g_h\left(B_{n,p}\right)\right] &=&
\sum_{j=0}^{\ell-1}  \binom{n}{j} p^j (1-p)^{n-j} e^{h(j-\ell)} \\
&+&  \sum_{j= \ell+1}^{n}  \binom{n}{j} p^j (1-p)^{n-j} \left\{e^{h(j-\ell)}-\left(h(j-\ell) +1\right)\right\} .
\end{eqnarray*}
Now the fact that the function $\frac{1+hx}{e^{hx}}$ is decreasing for $x\geq 1$ implies
$1-\frac{1+hx}{e^{hx}}\geq 1 - \frac{1+h}{e^h}$ for $x\geq 1$ and so we can estimate 
\[ e^{h(j-\ell)}-\left(h(j-\ell) +1\right)   = e^{h(j-\ell)}\left(1-\frac{1+h(j-\ell)}{e^{h(j-\ell)}}\right) \geq  e^{h(j-\ell)}\left(1-\frac{1+h}{e^{h}}\right),  \]
for $j\geq \ell +1$. This implies  
\begin{eqnarray*} 
H_m(n,p,t) - \mathbb{E}\left[g_h\left(B_{n,p}\right)\right]
&\geq& \sum_{j\leq \ell-1}  \binom{n}{j} p^j (1-p)^{n-j} e^{h(j-\ell)} \\
&+& \left(1 - \frac{1+h}{e^h}\right)  \sum_{j\geq \ell+1}  \binom{n}{j} p^j (1-p)^{n-j} e^{h(j-\ell)}
\end{eqnarray*}
or, equivalently, that
\begin{eqnarray*} 
\mathbb{E}\left[g_h\left(B_{n,p}\right)\right] &\leq& \frac{1+h}{e^{h}}\left( H_m(n,p,t) - \sum_{j\leq \ell-1}e^{h(j-\ell)}\mathbb{P}\left[B_{n,p}=j\right] \right)\\
&+& \left(1- \frac{1+h}{e^h}\right) \mathbb{P}\left[B_{n,p} =\ell\right]
\end{eqnarray*}
and the second statement of Theorem \ref{genmcdm} follows. 
The third statement has been proven in Section \ref{subsectionmartingale} and so the result follows.
\end{proof}

In other words, the previous result improves upon 
McDiarmid's by adding a "missing factor" that is equal to $\frac{1+h}{e^{h}}  < 1$. 

\section{$k$-wise independence - Proof of Theorem \ref{kwiseindep}}

This section is devoted to the proof of Theorem \ref{kwiseindep}. 

\begin{proof}[Proof of Theorem \ref{kwiseindep}] 
Markov's inequality implies that 
\[ \mathbb{P}\left[\sum_{i=1}^{n}X_i \geq np(1+\varepsilon)\right] \leq e^{-hnp(1+\varepsilon)} \mathbb{E}\left[e^{h\sum_{i=1}^{n}X_i}\right] . \]
From Theorem \ref{depHoeff} we know that 
\[ \mathbb{E}\left[e^{h\sum_{i=1}^{n}X_i}\right] \leq \sum_{j=0}^{n} \sum_{A\in \partial_j[n]} \; \mathbb{E}\left[\prod_{i\in A}X_i \prod_{i\in [n]\setminus A}(1-X_i)\right]  e^{hj} := Q . \]
Fix a subset $A\subseteq [n]$ such that $|A|\leq k$. Let $B$ be any subset of $[n]\setminus A$ 
such that $|B|= k - |A|$. Since $A$ and $B$ are disjoint and the random variables are $[0,1]$-valued 
and $k$-wise independent it follows
\[ \mathbb{E}\left[\prod_{i\in A}X_i \prod_{i\in [n]\setminus A}(1-X_i)\right] \leq \mathbb{E}\left[\prod_{i\in A}X_i \prod_{i\in B}(1-X_i)\right] = p^{|A|}(1-p)^{k-|A|} . \] 
Now fix a subset $A\subseteq [n]$ such that $|A| > k$. Let $A^{\prime}$ be any subset of $A$ of cardinality $k$.
Then 
\[ \mathbb{E}\left[\prod_{i\in A}X_i \prod_{i\in [n]\setminus A}(1-X_i)\right] \leq \mathbb{E}\left[\prod_{i\in A^{\prime}}X_i\right] = p^{k} . \]
The last two estimates yield
\begin{eqnarray*} Q &\leq& \sum_{j=0}^{k}\binom{n}{j} p^j (1-p)^{k-j}e^{hj} + \sum_{j=k+1}^{n} \binom{n}{j} p^k e^{hj} \\
&\leq& \frac{1}{(1-p)^{n-k}}\sum_{j=0}^{k}\binom{n}{j} p^j (1-p)^{n-j}e^{hj}\\
&+& \frac{1}{(p(1-p))^{n-k}} \sum_{j=k+1}^{n} \binom{n}{j} p^j (1-p)^{n-j} e^{hj}  \\
&\leq& \frac{1}{(p(1-p))^{n-k}} \sum_{j=0}^{n} \binom{n}{j} p^j(1-p)^{n-j} e^{hj} \\
&=&  \frac{1}{(p(1-p))^{n-k}} \left(1-p+pe^h\right)^n  , 
\end{eqnarray*}
where the last equation follows from the binomial theorem. Summarising, we have shown 
\[ \mathbb{P}\left[\sum_{i=1}^{n}X_i \geq np(1+\varepsilon)\right] \leq  \frac{e^{-hnp(1+\varepsilon)}}{(p(1-p))^{n-k}}  \left(1-p+pe^h\right)^n \]
and the result follows upon minimising the last expression with respect to $h>0$.
\end{proof}

\section{Dependency graphs - Proof of Theorem \ref{dependencygraph}}

In this section we prove Theorem \ref{dependencygraph}.The proof  is similar to the proof of 
Theorem \ref{kwiseindep}.

\begin{proof}[Proof of Theorem \ref{dependencygraph}] 
From Theorem \ref{depHoeff} we can infer  that 
\[ \mathbb{P}\left[\sum_{i=1}^{n}B_i \geq t\right] \leq e^{-ht} \sum_{j=0}^{n}\sum_{A\in\partial_j[n]} \mathbb{E}\left[\prod_{i\in A}B_i\prod_{i\in [n]\setminus A}(1-B_i)\right] e^{hj} . \]
Fix a subset $V_{\alpha} \subseteq V$, of cardinality $\alpha$, 
such that 
no two vertices of $V_{\alpha}$
are adjacent and let $I_{\alpha} = \{i_1,\ldots, i_{\alpha}\}$ be the indices of the vertices that 
belong to $V_{\alpha}$.
For every $A \subseteq [n]$, let us denote $L_A = A \cap I_{\alpha}$ and 
$R_A =([n]\setminus A)\cap I_{\alpha}$. Then, for all $j\in\{0,1,\ldots,n\}$ and all $A\in \partial_j[n]$, we have
\[ \prod_{i\in A}B_i\prod_{i\in [n]\setminus A}(1-B_i) \leq \prod_{i\in L_A}B_i\prod_{i\in R_A}(1-B_i). \]
Since the random variables $B_{i_k}, k=1,\ldots,\alpha$ are mutually independent, we conclude 
\[\mathbb{E}\left[\prod_{i\in A}B_i\prod_{i\in [n]\setminus A}(1-B_i)\right] \leq \left(\frac{1}{2}\right)^{\alpha}\]
which, in turn, implies that 
\[ \mathbb{P}\left[\sum_{i=1}^{n}B_i \geq t\right] \leq \left(\frac{1}{2}\right)^{\alpha} e^{-ht}\left(1+e^h\right)^n . \]
The result follows upon minimising the last expression with respect to $h>0$. 
\end{proof}

\section{$U$-statistics - Proof of Theorem \ref{Ustatconcentration}}\label{ustatsection}

This section is devoted to the proof of Theorem \ref{Ustatconcentration}.
The proof  combines similar ideas as above 
together with an adaptation of the proof of Theorem $2.1$ from Janson \cite{Janson} 
(see also Hoeffding \cite{Hoeffdingone}, Section $5$). The main idea is to 
express $X$ as a weighted sum $\sum_j w_j Y_j$ in such a way that 
each random variable $Y_i$ is a sum of \emph{independent} random variables.   

\begin{proof}[Proof of Theorem \ref{Ustatconcentration}] 
Since $\xi_1,\ldots,\xi_n$ are independent and identically distributed, 
it follows that the random variables $F(\xi_{i_1},\ldots,\xi_{i_d})$, for $(i_1,\ldots,i_d)\in [n]_{<}^{d}$, 
are identically distributed. 
Let $p$ be the expected value of the 
random variable $F(\xi_{i_1},\ldots,\xi_{i_d})$.  
Recall that we assume that $d$ divides $n$, i.e. $n=k\cdot d$, for some positive integer $k$. 
Let $\mathcal{P}$ be the set of all partitions of $[n]$ into $k$ subsets of cardinality $d$. We first need to 
know the proportion of partitions $P\in \mathcal{P}$ that contain a fixed $d$-set. To this end, notice that 
by symmetry each of the $\binom{n}{d}$ choices for a $d$-set belongs to the same 
number, say $a$, of partitions in the class $\mathcal{P}$. Furthermore, each element from 
$\mathcal{P}$ contains $k$ sets of cardinality $d$. Therefore, 
\[ \binom{n}{d} \cdot a = |\mathcal{P}| \cdot k \; \Leftrightarrow \; a = \frac{|\mathcal{P}|}{\binom{n-1}{d-1}}, \]
where $|\mathcal{P}|$ denotes the cardinality of $\mathcal{P}$. 
This implies that we can write 
\begin{eqnarray*} X &=& \sum_{(i_1,\ldots,i_d)\in [n]^{d}_{<}} F(\xi_{i_1}\ldots,\xi_{i_d}) \\
&=& \frac{1}{a} \sum_{P\in \mathcal{P}}\; \sum_{(i_1,\ldots,i_d) \in P} F(\xi_{i_1},\ldots,\xi_{i_d}).
\end{eqnarray*}
Notice that each term $X_P:=\sum_{(i_1,\ldots,i_d) \in P} F(\xi_{i_1},\ldots,\xi_{i_d})$ is a sum 
of \emph{independent} and identically distributed, $\{0,1\}$-valued random variables, $F(\xi_{i_1},\ldots,\xi_{i_d}), \{i_1,\ldots,i_d\}\in P$, whose mean equals $p$ or, in other words, $X_P$ is a binomial 
random variable of parameters $k$ and $p$. 
Furthermore, notice that $\mathbb{E}[X]= kp\binom{n-1}{d-1}$. \\
Fix a function $f(\cdot)\in \mathcal{F}$ and set $y:= \mathbb{E}[X] + t\binom{n}{d}, N_d := \binom{n-1}{d-1}$. 
Markov's inequality and the assumption that $f$ is 
non-negative and increasing imply 
\[ \mathbb{P}\left[X \geq y \right] \leq \frac{1}{f(y)} \mathbb{E}\left[f(X)\right] .\] 
The assumption that $f(\cdot)$ is convex yields
\begin{eqnarray*} \mathbb{E}\left[f\left( X\right)\right] &=& \mathbb{E}\left[f\left( \frac{1}{|\mathcal{P}|} \sum_{P\in\mathcal{P}} \frac{|\mathcal{P}|}{a}X_P\right)\right]  \\
&\leq& \sum_{P\in \mathcal{P}}
\frac{1}{|\mathcal{P}|}  \mathbb{E}\left[f\left( N_d \cdot X_P\right)\right] \\
&=& \mathbb{E}\left[f\left( N_d \cdot B_{k,p}\right)\right] ,
\end{eqnarray*}
where the last equation follows from the fact that each $X_P$ is a binomial random variable of parameters $k$ and $p$.
The first statement follows. For the sake of completeness, we proceed by proving the exponential bound 
in Theorem \ref{ustatHoeff}. 
Let $h$ be a positive real, to be chosen later, and consider the function $f_h(x) = e^{hx}, x\in \mathbb{R}$.
Clearly, $f_h\in \mathcal{F}$ and the first statement together with the binomial 
theorem yield
\begin{eqnarray*} \mathbb{P}\left[X \geq y \right] &\leq& e^{-hy} \sum_{j=0}^{k} \binom{k}{j} p^j (1-p)^{k-j} e^{hN_d j} \\
&=& e^{-hy} \left(1-p + pe^{hN_d}\right)^k, \; \text{for}\; h>0 .
\end{eqnarray*}
If we minimise the last expression with respect to $h>0$, we get that $h$ must satisfy $e^{hN_d}= \frac{(p+t)(1-p)}{p (1-p-t)}$. Substituting this into the last expression and recalling that 
$y= kN_d(p + t)$ gives
\begin{eqnarray*} \mathbb{P}\left[X \geq y \right]  &\leq& \left\{\left(\frac{p}{p+t}\right)^{p+t} \left(\frac{1-p}{1-p-t}\right)^{1-p-t}\right\}^{k} \\
&=& e^{-k D(p+t|| p)}\\
&\leq& e^{-2kt^2} ,
\end{eqnarray*}
where the last inequality follows from the  
standard estimate $D(p+t||p)\geq 2t^2$ on the Kullback-Leibler distance. 
We now prove the second statement. 
Let $h$ be such that $e^{hN_d}= \frac{(p+t)(1-p)}{p(1-p-t)}$  and let $g_h(x), x\in\mathbb{R}$,
be the function defined by $g_h(x) = \max\{0, h(N_dx-y) +1\}$.  The first statement implies 
\[  \mathbb{P}\left[X \geq y \right] \leq \sum_{j\geq \frac{hy-1}{hN_d}} \binom{k}{j} p^j (1-p)^{k-j}\left( h(N_dj-y) +1\right)  = \mathbb{E}\left[g_h\left(N_d B_{k,p}\right) \right], \]
where $B_{k,p}$ is a binomial random variable of parameters $k$ and $p$. 
Let us denote $H_u(k,p,t) = \sum_{j=0}^{k} \binom{k}{j} p^j (1-p)^{k-j} e^{h(N_d j-y)}$.
Recall that $y=kN_d(p+t)$ and notice 
that we can write
\begin{eqnarray*} 
H_u(k,p,t) - \mathbb{E}\left[g_h\left(N_d B_{k,p}\right) \right]
= \sum_{j <k(p+t)- \frac{1}{hN_d}} \binom{k}{j} p^j (1-p)^{k-j} e^{h(N_d j-y)}\\
+ \sum_{j\geq k(p+t)- \frac{1}{hN_d}} \binom{k}{j} p^j (1-p)^{k-j} \left(  e^{h(N_d j-y)} - \left( h(N_dj-y) +1\right) \right) .
\end{eqnarray*}
Since $t> \frac{p(1-p)(e-1)}{1-p+ep}$ if follows that $hN_d >1$ and therefore $k(p+t)- \frac{1}{hN_d}$
belongs to the interval $(k(p+t)-1, k(p+t))$. As $k(p+t)$ is assumed to be an integer we can rewrite 
the last equation as 
\begin{eqnarray*} 
H_u(k,p,t) - \mathbb{E}\left[g_h\left(N_d B_{k,p}\right) \right]
= \sum_{j \leq k(p+t)- 1} \binom{k}{j} p^j (1-p)^{k-j} e^{h(N_d j-y)}\\
+ \sum_{j\geq k(p+t)} \binom{k}{j} p^j (1-p)^{k-j} \left(  e^{h(N_d j-y)} - \left( h(N_dj-y) +1\right) \right) .
\end{eqnarray*}
Notice that for $j=k(p+t)$ the second term in the right hand side evaluates to $0$.
Since the function $\frac{1+hx}{e^{hx}}$ is decreasing for $x\geq 1$ we can estimate,  for 
every potive integer $j$ such that $j\geq k(p+t)+1$,
\begin{eqnarray*} 
e^{h(N_d j-y)} - \left( h(N_dj-y) +1\right) &=&  \left(1-\frac{h(N_dj-y) +1}{e^{h(N_d j-y)}}\right)e^{h(N_d j-y)} \\
&\geq&  \left(1-\frac{hN_d +1}{e^{hN_d}}\right)e^{h(N_d j-y)}
\end{eqnarray*}
and this implies that 
\begin{eqnarray*}  H_u(k,p,t) - \mathbb{E}\left[g_h\left(N_d B_{k,p}\right)\right] &\geq& \sum_{j\leq k(p+t)-1} \binom{k}{j} p^j (1-p)^{k-j} e^{h(N_d j-y)}\\
&+&\left(1- \frac{hN_d +1}{e^{hN_d}}\right)\sum_{j\geq k(p+t)+1} \binom{k}{j} p^j (1-p)^{k-j}  e^{h(N_d j-y)}  
\end{eqnarray*}
or, equivalently, that 
\begin{eqnarray*}   \mathbb{E}\left[g_h\left(N_d B_{k,p}\right)\right]  &\leq& \frac{hN_d +1}{e^{hN_d}} H_u(k,p,t) + \left(1-\frac{hN_d +1}{e^{hN_d}} \right) \mathbb{P}\left[B_{k,p}=j\right] \\
&-&  \frac{hN_d +1}{e^{hN_d}}\sum_{j=0}^{k(p+t)-1}e^{h(N_dj-y)}\mathbb{P}\left[B_{k,p}=j\right] 
\end{eqnarray*} 
and the second statement follows. 
To prove the third stetement, note that the previous bound is 
\[ \leq  \frac{hN_d +1}{e^{hN_d}}  \cdot\text{exp}\left(-2k t^2 \right) + \left(1- \frac{hN_d +1}{e^{hN_d}} \right)\mathbb{P}\left[B_{k,p}=k(p+t)\right] := Q. \] 
Now,  Hoeffding's Theorem \ref{folklore} implies that
\[\mathbb{P}\left[B_{k,p}=k(p+t)\right] \leq\mathbb{P}\left[B_{k,p}\geq k(p+t)\right]\leq \text{exp}\left(-2k t^2 \right) \] 
and the third statement follows from the fact that $Q$ is a  convex combination of $\text{exp}\left(-2k t^2 \right)$ and 
$\mathbb{P}\left[B_{k,p}=k(p+t)\right]$.
\end{proof}

\section{Some applications}\label{isolated}

In this section we discuss some applications of Theorem \ref{Impaglia} and 
Theorem \ref{Linial} to the theory of random graphs.   
Recall (see \cite{Impagliazzo}, Theorem $3.1$) that in this case the constant $c$ 
of Theorem \ref{Impaglia} is qual to $1$.
We employ this result in order to obtain concentration inequalities for particular 
sums of weakly dependent indicators that are encountered in the theory of 
Erd\H{o}s-R\'enyi random graphs. 
Let us mention that we do \emph{not} intend to provide optimal concentration bounds;
our intention is to emphasize that Theorems \ref{Impaglia}, \ref{Linial}, combined with some elementary 
combinatorial result, 
yields certain concentration inequalities  
in a rather direct and simple 
manner. \\
Notice that, in order to apply Theorem \ref{Impaglia} to a specific problem, one has to determine 
the constant $\gamma$. 
In this section we find this constant for particular problems from the theory of
Erd\H{o}s-R\'enyi random graphs. Recall that such graphs, on $n$ vertices, are constructed by 
joining pairs of \emph{labelled} vertices with probability $p\in (0,1)$, independently of all other pairs.  
Let $G\in \mathcal{G}(n,p)$ be an Erd\H{o}s-R\'enyi random graph and denote by $I_{n,p}$ the number of isolated 
vertices in $G$. Recall that a vertex is called \emph{isolated} is its degree equals zero. 
Below we provide a concentration inequality for $I_{n,p}$. 
For shaper results on this problem we refer the reader to 
Ghosh et al. \cite{Ghosh}. \\

\begin{prop} Let $G\in \mathcal{G}(n,p)$ be an Erd\H{o}s-R\'enyi random graph. Let $I_{n,p}$ be the number of 
isolated vertices in $G$ and fix a real number $t$ such that $n(1-p)^{(n-1)/2} \leq t < n$ and write 
$t= n(1-p)^{(n-1)/2}(1+\varepsilon)$, for some $\varepsilon >0$.
Then 
\[ \mathbb{P}\left[I_{n,p} \geq t\right] \leq  e^{-nD(\gamma(1+\varepsilon) || \gamma)},\]
where $\gamma = (1-p)^{(n-1)/2}$.
\end{prop}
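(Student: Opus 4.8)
The plan is to apply Theorem~\ref{Impaglia} with the Bernoulli sharpening $c=1$ (valid here since isolated-vertex indicators are $0/1$ random variables), so the only real work is to produce the estimate $\mathbb{E}\left[\prod_{i\in A} X_i\right] \leq \gamma^{|A|}$ for every $A\subseteq[n]$, with $\gamma = (1-p)^{(n-1)/2}$. Here $X_i$ is the indicator that vertex $i$ is isolated in $G\in\mathcal{G}(n,p)$, so $I_{n,p}=\sum_{i=1}^n X_i$, and $X_i$ is increasing with respect to nothing — rather it is a decreasing event in the edge variables. First I would fix $A\subseteq[n]$ with $|A|=a$ and compute $\mathbb{E}\left[\prod_{i\in A}X_i\right] = \mathbb{P}[\text{all vertices of }A\text{ are isolated}]$. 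This event says precisely that none of the edges incident to a vertex of $A$ is present. The number of such edges is the number of pairs $\{u,v\}$ with $\{u,v\}\cap A \neq \emptyset$, which is $\binom{a}{2} + a(n-a)$ (edges inside $A$ plus edges from $A$ to its complement). Since distinct edges are independent,
\[
\mathbb{E}\left[\prod_{i\in A}X_i\right] = (1-p)^{\binom{a}{2} + a(n-a)}.
\]

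The key inequality is then $\binom{a}{2} + a(n-a) \geq a\cdot\frac{n-1}{2}$ for all integers $1\leq a\leq n$, which would give $\mathbb{E}\left[\prod_{i\in A}X_i\right] \leq (1-p)^{a(n-1)/2} = \gamma^{a}$, as desired (using $1-p\in(0,1)$ to flip the exponent inequality into a bound). To verify it, divide by $a$: the claim becomes $\frac{a-1}{2} + (n-a) \geq \frac{n-1}{2}$, i.e. $n - a - \frac{a-1}{2} - \frac{n-1}{2} \geq 0$, i.e. $\frac{2n - 2a - (a-1) - (n-1)}{2} = \frac{n - 3a + 2}{2}\geq 0$... which fails for large $a$. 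So the naive bound is too weak in that regime; instead I would note that the relevant comparison only needs $\binom{a}{2}+a(n-a)\ge a(n-1)/2$, and rewrite the left side as $a\cdot\frac{2(n-a)+(a-1)}{2} = a\cdot\frac{2n-a-1}{2}$, so the inequality is $2n-a-1 \geq n-1$, i.e. $a\leq n$. That holds for every admissible $a$, with equality only at $a=n$. Hence the covariance-type hypothesis of Theorem~\ref{Impaglia} holds with this $\gamma$.

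Having established $\mathbb{E}\left[\prod_{i\in A}X_i\right]\leq\gamma^{|A|}$ for all $A$, I would observe that $\mathbb{E}[I_{n,p}] = n(1-p)^{n-1}\le n\gamma$... actually $\mathbb{E}[X_i]=(1-p)^{n-1}=\gamma^2$, so the mean is $n\gamma^2$, not $n\gamma$; but Theorem~\ref{Impaglia} is stated in terms of the hypothesized $\gamma$, not the true mean, and with $t = n\gamma(1+\varepsilon)$ for $\varepsilon\in(0,\tfrac1\gamma-1)$. Since we are given $t = n(1-p)^{(n-1)/2}(1+\varepsilon) = n\gamma(1+\varepsilon)$ with $t<n$ forcing $\varepsilon<\tfrac1\gamma-1$, the hypotheses of Theorem~\ref{Impaglia} are met verbatim, and with $c=1$ it yields
\[
\mathbb{P}[I_{n,p}\geq t] \leq e^{-nD(\gamma(1+\varepsilon)\|\gamma)},
\]
which is the claim. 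The main obstacle — really the only content — is the counting identity $\binom{a}{2}+a(n-a)$ for the number of edges touching $A$ and the accompanying elementary inequality; everything else is a direct invocation of Theorem~\ref{Impaglia} in the Bernoulli case.
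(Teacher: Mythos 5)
Your proposal is correct and follows essentially the same route as the paper: you compute $\mathbb{E}\bigl[\prod_{i\in A}X_i\bigr]=(1-p)^{\binom{a}{2}+a(n-a)}$, verify $\binom{a}{2}+a(n-a)=a\cdot\frac{2n-a-1}{2}\geq a\cdot\frac{n-1}{2}$ (equivalent to $a\leq n$), and invoke Theorem~\ref{Impaglia} with the Bernoulli constant $c=1$, exactly as in the paper's argument. The sign slip in your first attempted verification (which suggested failure for large $a$) is harmless, since your corrected computation is the right one and the final inequality holds for every admissible $a$.
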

\begin{proof} 
For every vertex $v_i,i=1,\ldots,n$, let $I_i$ be the indicator of the event "$v_i$ is isolated". 
Then $I_{n,p} = \sum_i I_i$ and $\mathbb{E}\left[I_{n,p}\right] = n(1-p)^{n-1}$. 
Let $A\subseteq [n]$ be a set of cardinality $j\in\{0,1,\ldots,n\}$. 
Theorem \ref{Impaglia} requires to find constant $\gamma$ such that 
$\mathbb{E}\left[\prod_{i\in A}I_i\right] \leq \gamma^j$.
We claim that we may choose $\gamma = (1-p)^{(n-1)/2}$; the result then 
follows from Theorem \ref{Impaglia}. To prove the claim, notice that the expression on the left hand side 
of the previous inequality equals the probability that the vertices $v_i, i\in A$ are isolated, 
which happens with probability $(1-p)^{\binom{j}{2}+j(n-j)}$. Hence 
\begin{eqnarray*} \mathbb{E}\left[\prod_{i\in A}I_i\right] = (1-p)^{\binom{j}{2}+j(n-j)} 
\leq (1-p)^{\frac{(n-1)j}{2}}  ,
\end{eqnarray*}
as required. 
\end{proof}

We now proceed with a concentration inequality on yet another sum of dependent indicators. 
Let $G\in \mathcal{G}(n,p)$ be an Erd\H{o}s-R\'enyi random graph and denote by $T_{n,p}$ the number of 
triangles in $G$. The problem of obtaining upper bounds on the probability that $T_{n,p}$ is larger 
than its mean is classical; we refer the reader to the works of DeMarco et al. \cite{Demarco}, 
Janson \cite{Janson} and Kim et al. 
\cite{KimVu} for much sharper bounds and references. Using Theorem \ref{Impaglia} one can 
obtain the following concentration bound on the number of triangles in a random graph. \\

\begin{prop}\label{trigono} Let $G\in \mathcal{G}(n,p)$ be an Erd\H{o}s-R\'enyi random graph 
and denote by $T_{n,p}$ the number of 
triangles in $G$. Fix a real number $t$ such that $\binom{n}{3} p^{\frac{3}{n-2}} \leq t < \binom{n}{3}$ and 
write $t = \binom{n}{3} p^{\frac{3}{n-2}}(1+\varepsilon)$, for some $\varepsilon >0$.
Then 
\[ \mathbb{P}\left[T_{n,p}\geq t\right] \leq e^{-\binom{n}{3}D(\gamma(1+\varepsilon) || \gamma)},\]
where $\gamma = p^{\frac{3}{n-2}}$.
\end{prop}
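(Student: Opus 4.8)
The plan is to mimic the proof of the previous proposition: identify the family of dependent indicators, bound the expectation of an arbitrary product of them by $\gamma^{|A|}$ with $\gamma = p^{3/(n-2)}$, and then invoke Theorem \ref{Impaglia} (with constant $c=1$, since the indicators are Bernoulli $0/1$; see \cite{Impagliazzo}, Theorem $3.1$).

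First I would index the $\binom{n}{3}$ potential triangles of $G$ by the $3$-subsets of $[n]$. For each such $3$-subset $S$, let $T_S$ be the indicator of the event that all three pairs inside $S$ are edges of $G$, so that $T_{n,p} = \sum_S T_S$ and $\mathbb{E}[T_S] = p^3$. Relabelling the $T_S$ as $X_1,\ldots,X_N$ with $N = \binom{n}{3}$, it remains to verify the hypothesis of Theorem \ref{Impaglia}: for every family $A$ of $3$-subsets, $\mathbb{E}\big[\prod_{S \in A} T_S\big] \le \gamma^{|A|}$.

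The key step is a double-counting estimate. For a family $A$ of $3$-subsets, let $E(A)$ be the set of pairs (potential edges) that lie in at least one member of $A$, and put $e(A) = |E(A)|$. Since distinct potential edges of $G$ are present independently, $\mathbb{E}\big[\prod_{S\in A} T_S\big]$ equals the probability that every pair in $E(A)$ is present, namely $p^{e(A)}$. Now count incidences between members of $A$ and the pairs they contain: each $S \in A$ contributes exactly $3$ pairs, while each pair $\{u,v\}$ is contained in at most $n-2$ of the $3$-subsets (one for each choice of the third vertex), hence in at most $n-2$ members of $A$. Therefore $3|A| \le (n-2)\,e(A)$, i.e. $e(A) \ge \tfrac{3|A|}{n-2}$. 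Since $p \in (0,1)$, this yields $\mathbb{E}\big[\prod_{S\in A}T_S\big] = p^{e(A)} \le p^{3|A|/(n-2)} = \gamma^{|A|}$ with $\gamma = p^{3/(n-2)}$.

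Finally I would apply Theorem \ref{Impaglia} to $X_1,\ldots,X_N$ with this $\gamma$. The hypothesis $t < \binom{n}{3} = N$ together with $t = N\gamma(1+\varepsilon)$ forces $\gamma(1+\varepsilon) < 1$, i.e. $\varepsilon \in \big(0, \tfrac{1}{\gamma} - 1\big)$, so the theorem applies and gives $\mathbb{P}[T_{n,p} \ge t] \le c\, e^{-N D(\gamma(1+\varepsilon) || \gamma)}$; since the $X_i$ are Bernoulli $0/1$, $c = 1$, which is exactly the asserted bound. The only real content is the double-counting inequality $e(A) \ge 3|A|/(n-2)$; everything else is bookkeeping, and the choice of $\gamma$ is tight, as seen from the extreme family $A$ consisting of all $n-2$ triangles through a fixed pair of vertices.
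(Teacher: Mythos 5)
Your proposal is correct and follows essentially the same route as the paper: the double-counting inequality $e(A)\geq 3|A|/(n-2)$ is exactly the content of the paper's Lemma \ref{extrelem}, and the conclusion is then drawn from Theorem \ref{Impaglia} with $c=1$ for Bernoulli indicators, just as in the paper's proof. Your phrasing of the count directly over the family $A$ of potential triangles is a slightly cleaner packaging of the same argument, but not a different method.
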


The proof of the previous result   
is based upon the following Mantel-type result. \\

\begin{lemma}\label{extrelem} 
Fix $n\geq 3$, set $N=\binom{n}{3}$ and
suppose that $G=(V,E)$ is a graph on $n$ vertices having $j$ triangles, where $j\in \{0,1,\ldots,N\}$. 
For every triangle $T_i,i=1,\ldots,j$ in $G$, let $E_i$ be the set consisting of the three edges that 
belong to $T_i$ and set $R= \cup_{i=1}^{j} E_i$.
Then $R$ contains at least $\frac{3j}{n-2}$ edges. 
\end{lemma}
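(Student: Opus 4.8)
The plan is to prove the bound by a double-counting argument on incidences between the $j$ triangles of $G$ and their edges. Introduce the set
\[ \mathcal{I} = \left\{ (T_i, e) : i\in\{1,\ldots,j\},\ e\in E_i \right\}, \]
i.e. the set of all pairs consisting of a triangle of $G$ together with one of its three edges. I would estimate $|\mathcal{I}|$ in two ways.

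First, counting by triangles: each $T_i$ has exactly three edges, namely the elements of $E_i$, and by definition of $R$ each of these lies in $R$. Hence every triangle contributes exactly three pairs to $\mathcal{I}$, so $|\mathcal{I}| = 3j$.

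Second, counting by edges: fix an edge $e=\{u,v\}\in R$ and bound the number of triangles of $G$ containing it. Any triangle through $e$ has the form $\{u,v,w\}$ for some $w\in V\setminus\{u,v\}$, and there are only $n-2$ such vertices, so at most $n-2$ triangles contain $e$. Edges outside $R$ appear in no pair. Summing over $e\in R$ therefore gives $|\mathcal{I}| \leq (n-2)\,|R|$.

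Combining the two counts yields $3j \leq (n-2)\,|R|$, that is, $|R| \geq \frac{3j}{n-2}$, which is the assertion. I do not expect any genuine obstacle here; the only point that requires (trivial) care is the observation that a fixed edge lies in at most $n-2$ triangles, since this is exactly what makes the resulting inequality point in the desired direction.
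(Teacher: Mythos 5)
Your proof is correct and is essentially the paper's own argument: both are the same double count (the paper counts pairs consisting of an edge of $R$ and a vertex off that edge, which corresponds bijectively to your triangle--edge incidences for the pairs that matter), using that each triangle contributes three incidences while each edge lies in at most $n-2$ triangles, giving $3j \leq (n-2)\lvert R\rvert$. No gap; nothing further is needed.
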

\begin{proof} Let $|R|$ denote the cardinality of $R$. 
Count pairs $(e,v)$, where $e=(v_1,v_2)$ is an edge from $R$ and $v$ is a vertex from $V\setminus \{v_1,v_2\}$. Now, on one hand, 
the number of such pairs is at most $|E|\cdot(n-2)$. On the other hand, each triangle of $G$  
is counted exactly three times. Thus 
\[ |R|\cdot(n-2) \geq 3j \]
and the result follows. 
\end{proof}

\begin{proof}[Proof of Proposition \ref{trigono}] 
Set $N:= \binom{n}{3}$.
Let $T_i, i=1,\ldots, N$ be an enumeration of all potential triangles in $G$.  
Given a triangle $T_i$, let $E_i$ denote the set consisting of the three edges that belong to $T_i, i=1,\ldots,N$.
Define $X_i$ to be the indicator of the event that triangle $T_i$ is present in $G$.
Then the number of triangles in $G$ equals $T_G=\sum_{i} X_i$.
In order to apply Theorem \ref{Impaglia}
we need to find an upper bound on 
$\mathbb{E}\left[\prod_{i \in A} X_i\right]$, for $A \in \partial_j[N]$ and $j\in \{0,\ldots,N\}$.
Let $j\in \{0,1,\ldots,N\}$ be such that there exists graphs on $n$ vertices having $j$ triangles and note that, 
for $A\in \partial_j[N]$, we have 
\[ \mathbb{E}[Z_A] = \mathbb{P}\left[X_i=1,\; \text{for}\; i\in A\right] = p^{\ell}   , \]
where $\ell$ denotes the cardinality of the set $\cup_{i\in A} E_i$. 
Lemma \ref{extrelem} finishes the proof.
\end{proof}

Clearly, Proposition \ref{trigono} is not very informative; the constant $\gamma$ is quire large. 
Perhaps more sophisticated versions of Lemma \ref{extrelem} can provide smaller values of $\gamma$. 
Let us remark that Lemma \ref{extrelem} may be iterated to produce bounds on the number of cliques 
in an Erd\H{o}s-R\'enyi random graph. Let us illustrate this with the number of $4$-cliques. 
Let $G\in \mathcal{G}(n,p)$ be an Erd\H{o}s-R\'enyi random graph and denote by $Q_{n,p}$ the number of 
$4$-cliques in $G$. We first provide a lower bound on $4$-cliques in a graph in terms of triangles. \\

\begin{lemma}\label{extrelemtwo} 
Fix $n\geq 4$, set $N_4=\binom{n}{4}$ and
suppose that $G=(V,E)$ is a graph on $n$ vertices having $k$ $4$-cliques, where $k\in \{0,1,\ldots,N_4\}$. 
For every $4$-clique $Q_i,i=1,\ldots,j$ in $G$, let $T_i$ be the set consisting of the four triangles that 
belong to $Q_i$ and set $R= \cup_{i=1}^{j} T_i$.
Then $R$ contains at least $\frac{4k}{n-3}$ triangles and so, by Lemma \ref{extrelem}, at least 
$\frac{12 k}{(n-2)(n-3)}$ edges.  
\end{lemma}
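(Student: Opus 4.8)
The plan is to mimic the double-counting argument that proves Lemma \ref{extrelem}, except that now I would count incidences between triangles and the vertices that extend them to $4$-cliques. The case $k=0$ is trivial (all three quantities vanish), so assume $k\geq 1$. Fix an enumeration $Q_1,\ldots,Q_k$ of the $4$-cliques of $G$, let $T_i$ be the set of the four triangles of $Q_i$, and set $R=\bigcup_{i=1}^{k}T_i$. I would then introduce the set $\mathcal{S}$ of pairs $(T,v)$ such that $T\in R$, the vertex $v\in V$ is not a vertex of $T$, and $T\cup\{v\}$ spans a $4$-clique of $G$, and estimate $|\mathcal{S}|$ in two ways.

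On the one hand, each $4$-clique $Q_i$ contributes exactly four pairs to $\mathcal{S}$: deleting each of its four vertices in turn leaves one of the four triangles in $T_i\subseteq R$, paired with the deleted vertex. Distinct vertices of a fixed $Q_i$ yield distinct pairs, and since a pair $(T,v)$ recovers the clique $T\cup\{v\}$, pairs arising from different $Q_i$'s are distinct too; hence $|\mathcal{S}|=4k$. On the other hand, for a fixed triangle $T\in R$ there are only $n-3$ vertices outside $T$, so the number of pairs of $\mathcal{S}$ with first coordinate $T$ is at most $n-3$, and summing over $T\in R$ gives $|\mathcal{S}|\leq |R|\,(n-3)$. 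Comparing the two bounds yields $|R|\geq \dfrac{4k}{n-3}$, which is the first assertion.

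For the edge bound I would let $R'$ be the union of the three-element edge sets of the triangles in $R$ and apply Lemma \ref{extrelem} to the graph $G'=(V,R')$ on $n$ vertices. Every triangle in $R$ is a triangle of $G'$, so $G'$ has at least $|R|$ triangles, while the union of the edge sets of all triangles of $G'$ is contained in $R'$; thus Lemma \ref{extrelem} gives $|R'|\geq \dfrac{3|R|}{n-2}\geq \dfrac{3}{n-2}\cdot\dfrac{4k}{n-3}=\dfrac{12k}{(n-2)(n-3)}$, as claimed.

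The only delicate point is justifying the exact equality $|\mathcal{S}|=4k$: one must check both that a single $4$-clique contributes four genuinely distinct pairs and that no pair is counted for two different cliques, and it is precisely here that the hypothesis that $Q_1,\ldots,Q_k$ enumerates \emph{distinct} $4$-cliques is used. Everything else is the same averaging trick already employed for Lemma \ref{extrelem}, so I expect no further obstacle.
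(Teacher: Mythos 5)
Your proof is correct and follows essentially the same route as the paper: a double count of (triangle, vertex) pairs, bounded above by $|R|(n-3)$ and below by $4k$ since each $4$-clique contributes four pairs, followed by an application of Lemma \ref{extrelem} for the edge bound. Your write-up is in fact slightly more careful than the paper's (restricting to pairs that actually span a $4$-clique and checking distinctness, and passing to the auxiliary graph $G'=(V,R')$ to justify the second inequality), but the underlying argument is the same.
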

\begin{proof} 
Count pairs $(T,v)$, where $T$ is a triangle from $R$ and $v$ is a vertex from $V$ which 
is different from the vertices of the triangle $T$.  The number 
of such pairs is at most $|R|\cdot (n-3)$ and each $4$-clique is counted exactly $4$ times. 
\end{proof}

We can therefore conclude the following, rather crude, bound whose proof is similar 
to the proof of Proposition \ref{trigono} and so is left to the reader. \\

\begin{prop} Let $G\in \mathcal{G}(n,p)$ be an Erd\H{o}s-R\'enyi random graph 
and denote by $Q_{n,p}$ the number of 
$4$-cliques in $G$. Fix a real number $t$ such that $\binom{n}{4} p^{\frac{12}{(n-2)(n-3)}} \leq t < \binom{n}{3}$ and 
write $t = \binom{n}{4} p^{\frac{12}{(n-2)(n-3)}}(1+\varepsilon)$, for some $\varepsilon >0$.
Then 
\[ \mathbb{P}\left[Q_{n,p}\geq t\right] \leq e^{-\binom{n}{4}D(\gamma(1+\varepsilon) || \gamma)},\]
where $\gamma = p^{\frac{12}{(n-2)(n-3)}}$.
\end{prop}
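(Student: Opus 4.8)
The plan is to run the argument of Proposition \ref{trigono} almost verbatim, with Lemma \ref{extrelemtwo} taking the place of Lemma \ref{extrelem}. First I would set $N_4 := \binom{n}{4}$ and fix an enumeration $Q_1,\ldots,Q_{N_4}$ of all potential $4$-cliques in $G$; for each $i$ let $E_i$ denote the set of six edges spanned by $Q_i$, and let $X_i$ be the indicator of the event that all six edges of $Q_i$ are present in $G$, so that $Q_{n,p}=\sum_i X_i$ and $\mathbb{E}[X_i]=p^6$. The $X_i$ are Bernoulli $0/1$ random variables, so the constant $c$ in Theorem \ref{Impaglia} may be taken to be $1$ (see \cite{Impagliazzo}, Theorem $3.1$), which is what makes the clean exponential bound available.

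Next I would verify the covariance hypothesis of Theorem \ref{Impaglia} with $\gamma = p^{\frac{12}{(n-2)(n-3)}}$. Fix $j\in\{0,1,\ldots,N_4\}$ for which graphs on $n$ vertices with $j$ $4$-cliques exist, and let $A\in\partial_j[N_4]$. Since $\prod_{i\in A}X_i$ is the indicator that every $4$-clique indexed by $A$ is present, and these $4$-cliques jointly involve $\ell := |\bigcup_{i\in A}E_i|$ distinct potential edges, independence of the edges gives $\mathbb{E}\!\left[\prod_{i\in A}X_i\right]=p^{\ell}$. The key point is that the union of the edge sets of the $4$-cliques in $A$ is exactly the union of the edge sets of the triangles contained in those $4$-cliques (each edge of a $K_4$ lies in a triangle of that $K_4$), so Lemma \ref{extrelemtwo} applies and yields $\ell \geq \frac{12j}{(n-2)(n-3)}$. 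Hence $\mathbb{E}\!\left[\prod_{i\in A}X_i\right]=p^{\ell}\leq p^{\frac{12j}{(n-2)(n-3)}}=\gamma^{j}$, as required.

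Finally I would invoke Theorem \ref{Impaglia} directly. With $\mathbb{E}[Q_{n,p}]=N_4 p^6$, the roles of ``$n$'' and ``$n\gamma$'' in Theorem \ref{Impaglia} are played here by $N_4$ and $N_4\gamma$, and the hypothesis $t=\binom{n}{4}\gamma(1+\varepsilon)$ with $\varepsilon>0$ and $\gamma(1+\varepsilon)<1$ places $t$ in the admissible range. Theorem \ref{Impaglia} (with $c=1$) then gives
\[ \mathbb{P}\left[Q_{n,p}\geq t\right] \leq e^{-\binom{n}{4}D(\gamma(1+\varepsilon)\,||\,\gamma)}, \]
which is the claim. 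There is no real obstacle here; the only point requiring care is the legitimacy of chaining Lemma \ref{extrelemtwo} with Lemma \ref{extrelem} inside Lemma \ref{extrelemtwo}'s proof, i.e.\ that passing from $4$-cliques to their triangles and then to their edges loses no edges, which is immediate.
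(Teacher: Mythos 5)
Your proposal is correct and is exactly the argument the paper intends: it mirrors the proof of Proposition \ref{trigono}, replacing Lemma \ref{extrelem} by Lemma \ref{extrelemtwo} to get $\mathbb{E}\bigl[\prod_{i\in A}X_i\bigr]=p^{\ell}\leq \gamma^{|A|}$ with $\gamma=p^{\frac{12}{(n-2)(n-3)}}$, and then applying Theorem \ref{Impaglia} with $c=1$ for Bernoulli indicators (the paper omits the details, stating the proof is similar to that of Proposition \ref{trigono}). No gaps.
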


Let us proceed with some applications of Theorem \ref{Linial} to another model of random graphs, 
namely $G(n,m)$. Recall that such a graph is obtained by selecting uniformly at random 
a graph, $G$, from the set of all labelled graphs on $n$ vertices and $m$ edges. 
We begin with a concentration bound on the number of isolated vertices in $G$. \\

\begin{prop} Let $G\in G(n,m)$ and denote by $I_{n,m}$ the number of isolated vertices in $G$. 
Let $t$ be a positive integer. Then
\[ \mathbb{P}\left[I_{n,m}\geq t\right] \leq \min_{0<k<t} \binom{n}{k}  \binom{\binom{n-k}{2}}{m}/\binom{t}{k}\binom{\binom{n}{2}}{m}  . \]
\end{prop}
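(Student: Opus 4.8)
The plan is to apply Theorem~\ref{Linial} to the indicators $I_1,\ldots,I_n$ of the events ``$v_i$ is isolated'' in $G$, where $G$ is drawn uniformly from $G(n,m)$. Then $I_{n,m}=\sum_i I_i$, and Theorem~\ref{Linial} gives, for every positive integer $k$ with $0<k<t$,
\[
\mathbb{P}\left[I_{n,m}\geq t\right]\leq \frac{1}{\binom{t}{k}}\sum_{A:|A|=k}\mathbb{E}\left[\prod_{i\in A}I_i\right].
\]
Since the right-hand side holds for each such $k$, we may take the minimum over $0<k<t$ at the end. So the task reduces to evaluating $\mathbb{E}\left[\prod_{i\in A}I_i\right]$ for a fixed set $A$ of $k$ vertices.

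First I would observe that $\prod_{i\in A}I_i$ is the indicator of the event that all $k$ vertices in $A$ are simultaneously isolated. In the $G(n,m)$ model, every labelled graph on $n$ vertices with exactly $m$ edges is equally likely, so this probability is the number of $m$-edge graphs on $[n]$ in which no vertex of $A$ is incident to an edge, divided by the total number $\binom{\binom{n}{2}}{m}$ of $m$-edge graphs. A graph with $A$ isolated is exactly a graph whose $m$ edges all lie among the remaining $n-k$ vertices; the number of potential such edges is $\binom{n-k}{2}$, so the count is $\binom{\binom{n-k}{2}}{m}$. Hence
\[
\mathbb{E}\left[\prod_{i\in A}I_i\right]=\binom{\binom{n-k}{2}}{m}\Big/\binom{\binom{n}{2}}{m},
\]
and this value does not depend on which $k$-set $A$ is chosen. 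The sum over the $\binom{n}{k}$ sets $A$ of cardinality $k$ therefore just multiplies this by $\binom{n}{k}$, giving
\[
\mathbb{P}\left[I_{n,m}\geq t\right]\leq \frac{1}{\binom{t}{k}}\cdot\binom{n}{k}\cdot\binom{\binom{n-k}{2}}{m}\Big/\binom{\binom{n}{2}}{m},
\]
and minimising over $0<k<t$ yields the claim.

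There is essentially no serious obstacle here; the only point requiring a word of care is the implicit convention that $\binom{\binom{n-k}{2}}{m}=0$ when $m>\binom{n-k}{2}$ (i.e.\ when it is impossible for $k$ vertices to be simultaneously isolated), which is consistent with the binomial-coefficient notation used throughout, and the observation that the bound is only non-vacuous for $k$ in the stated range, so the minimum is well defined. Everything else is the direct computation above combined with Theorem~\ref{Linial}.
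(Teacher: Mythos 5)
Your proposal is correct and follows essentially the same route as the paper: apply Theorem~\ref{Linial} with threshold $t$ to the isolation indicators, compute $\mathbb{E}\left[\prod_{i\in A}I_i\right]=\binom{\binom{n-k}{2}}{m}\big/\binom{\binom{n}{2}}{m}$ by counting $m$-edge graphs avoiding $A$, and minimise over $k$. The remark about the convention $\binom{\binom{n-k}{2}}{m}=0$ when $m>\binom{n-k}{2}$ is a sensible extra observation but not needed beyond what the paper does.
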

\begin{proof} 
Let $I_i$ be the indicator of the event "vertex $i$ is isolated". Then $I_{n,m}=\sum_i I_i$.
For fixed $A\subseteq [n]$ of cardinality $k$ we have
$\mathbb{E}\left[\prod_{i\in A}I_i\right]= \mathbb{P}\left[I_i =1, \text{for all}\; i\in A\right]$ 
and the later probability equals  
\[ \mathbb{P}\left[I_i =1, \text{for}\; i\in A\right] = \binom{\binom{n-k}{2}}{m}/\binom{\binom{n}{2}}{m} .\]
Theorem \ref{Linial} finishes the proof. 
\end{proof}

Off course, the previous bound is useful for $t$ such that 
$t> n\binom{n-1}{m}/\binom{n}{m}$. Note that, for such $t$, 
the bound of the previous result reduces to Markov's inequality when $k=1$ and so the minimum over 
$k\in \{1,\ldots, t-1\}$ provides a better bound than Markov's inequality. 
Our paper ends with a concentration bound on the number of triangles in $G\in G(n,m)$. \\

\begin{prop} Let $G\in G(n,m)$ and denote by $T_{n,m}$ the number of triangles in $G$. 
Let $t$ be a positive integer from the set $\{2,\ldots, \binom{n}{3}\}$. Then
\[ \mathbb{P}\left[T_{n,m}\geq t\right] \leq \min_{0<k<t} \binom{\binom{n}{3}}{k}  \binom{\binom{n}{2}-\lfloor\frac{3k}{n-2}\rfloor}{m-\lfloor\frac{3k}{n-2}\rfloor}/\binom{t}{k}\binom{\binom{n}{2}}{m}  . \]
\end{prop}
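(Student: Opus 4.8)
The plan is to mimic the proof of the preceding proposition on isolated vertices, combining the Linial--Luria bound (Theorem \ref{Linial}) with the extremal Lemma \ref{extrelem}. First I would enumerate the $N:=\binom{n}{3}$ potential triangles on the vertex set, write $E_i$ for the edge set of the $i$-th one, and let $X_i$ be the indicator that it is present in $G\in G(n,m)$, so that $T_{n,m}=\sum_{i=1}^{N}X_i$. Applying Theorem \ref{Linial} to these $N$ Bernoulli indicators, with the integer ``$\beta n$'' of that theorem taken to be $t$ and with an arbitrary integer $k\in\{1,\ldots,t-1\}$, gives
\[ \mathbb{P}\left[T_{n,m}\geq t\right]\leq\frac{1}{\binom{t}{k}}\sum_{A\,:\,|A|=k}\mathbb{E}\left[\prod_{i\in A}X_i\right]. \]

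Next I would bound each expectation. For $A$ with $|A|=k$, put $R_A:=\bigcup_{i\in A}E_i$ and $\ell:=|R_A|$. Then $\prod_{i\in A}X_i=1$ precisely when every edge of $R_A$ lies in $G$, so $\mathbb{E}\!\left[\prod_{i\in A}X_i\right]$ equals the probability that a uniformly random $m$-edge graph contains a fixed set of $\ell$ edges, namely $\binom{\binom{n}{2}-\ell}{m-\ell}\big/\binom{\binom{n}{2}}{m}$ (with the convention that a binomial coefficient with negative lower index is $0$, which correctly handles $\ell>m$). The monotonicity step is the observation that this ratio equals $\prod_{s=0}^{\ell-1}\frac{m-s}{\binom{n}{2}-s}$, a product of factors each at most $1$ since $m\leq\binom{n}{2}$, hence non-increasing in $\ell$. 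Applying Lemma \ref{extrelem} to the graph with edge set $R_A$ (whose set of triangles includes the $k$ triangles indexed by $A$) yields $\ell\geq\frac{3k}{n-2}$, so $\ell\geq\lfloor\frac{3k}{n-2}\rfloor$, and the monotonicity gives
\[ \mathbb{E}\left[\prod_{i\in A}X_i\right]\leq\binom{\binom{n}{2}-\lfloor\frac{3k}{n-2}\rfloor}{m-\lfloor\frac{3k}{n-2}\rfloor}\bigg/\binom{\binom{n}{2}}{m} \]
uniformly over all $A$ of size $k$.

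Since there are $\binom{N}{k}=\binom{\binom{n}{3}}{k}$ such sets $A$, substituting this uniform bound into the Linial--Luria inequality yields the claimed estimate for each fixed $k$, and taking the minimum over $k\in\{1,\ldots,t-1\}$ completes the argument. I do not expect a serious obstacle here; the points needing care are purely bookkeeping: checking the hypotheses of Theorem \ref{Linial} (that $t$ is a positive integer with $0<k<t$, treating the boundary value $t=\binom{n}{3}$ separately, where the bound holds trivially since $\binom{n}{3}$ triangles forces $G=K_n$), confirming the monotonicity argument under the stated convention on degenerate binomials, and noting that the crude rounding down in $\lfloor\frac{3k}{n-2}\rfloor$ is legitimate precisely because the probability is decreasing in the number of required edges. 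This mirrors exactly the isolated-vertex computation, with Lemma \ref{extrelem} supplying the edge count in place of the direct count there.
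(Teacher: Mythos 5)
Your proposal is correct and follows essentially the same route as the paper: enumerate the $\binom{n}{3}$ potential triangles, bound $\mathbb{E}\bigl[\prod_{i\in A}X_i\bigr]$ via Lemma \ref{extrelem} applied to the union of the edge sets, and plug into Theorem \ref{Linial}. The only (cosmetic) difference is that you justify the rounding to $\lfloor\frac{3k}{n-2}\rfloor$ by monotonicity of $\binom{\binom{n}{2}-\ell}{m-\ell}/\binom{\binom{n}{2}}{m}$ in $\ell$, whereas the paper fixes a subset $E^{\prime}$ of exactly $\lfloor\frac{3k}{n-2}\rfloor$ edges whose presence is forced by the $k$ triangles.
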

\begin{proof} 
Set $N=\binom{n}{3}$. 
Let $T_i,i=1,\ldots, N$ be an enumeration of all potential triangles, let $V_i$ be set consisting 
of the three vertices of triangle $T_i$ and let $E_i$ be the set consisting of the three edges of $T_i$.
Let $I_i$ be the indicator of the event "triangle $i$ is present in $G$". Then $T_{n,m}=\sum_i I_i$.
Fix positive integer $k$ such that $0<k<t$. 
If $A\subseteq [N]$ is a set of indices of cardinality $k$ then 
$\mathbb{E}\left[\prod_{i\in A}I_i\right]$ equals the probability 
that the triangles $T_i,i\in A$ are all present in $G$. 
The set of vertices $\cup_i V_i$ and the set of edges $\cup_i E_i$ induce a (potential) graph 
that has $k$ triangles and so, 
by Lemma  \ref{extrelem}, it has at least $\frac{3k}{|\cup_i V_i|-2}\geq \frac{3k}{n-2}$ edges. We can thus 
associate to each, non-empty, collection $\{T_i\}_i$ of $k$ triangles a set, $E^{\prime}$, 
consisting of $\lfloor\frac{3k}{n-2}\rfloor$ edges in such a way that 
if the triangles $\{T_i\}_i$ are present in $G$ then the edges from $E^{\prime}$ are also present in $G$.
This implies that 
\[ \mathbb{P}\left[I_i =1, \text{for}\; i\in A\right] \leq  \binom{\binom{n}{2}-\lfloor\frac{3k}{n-2}\rfloor}{m-\lfloor\frac{3k}{n-2}\rfloor}/\binom{\binom{n}{2}}{m} \]
and the result follows from Theorem \ref{Linial}. 
\end{proof}

\textbf{Acknowledgements}
The authors are supported by ERC Starting Grant 240186 "MiGraNT, Mining Graphs and Networks: a Theory-based approach". We are grateful to Dr. Yuyi Wang for fruitful discussions and valuable comments.

\end{document}